\title[Asymptotic linearity of the mapping class group]{Asymptotic linearity of the mapping class group and a homological version of the Nielsen-Thurston classification}
\newtheorem{thm}{Theorem}
\newtheorem{lemma}[thm]{Lemma}
\newtheorem{cor}[thm]{Corollary}
\newtheorem{prop}[thm]{Proposition}
\newtheorem{quest}[thm]{Question}
\numberwithin{thm}{section}
\newcommand{\al}{\alpha}
\newcommand{\eps}{\epsilon}
\newcommand{\gam}{\Gamma}
\newcommand{\bZ}{\mathbb{Z}}
\newcommand{\bR}{\mathbb{R}}
\newcommand{\bC}{\mathbb{C}}
\DeclareMathOperator{\Aut}{Aut}
\DeclareMathOperator{\Out}{Out}
\DeclareMathOperator{\Homeo}{Homeo}
\DeclareMathOperator{\Mod}{Mod}
\DeclareMathOperator{\rk}{rk}
\newcommand{\bQ}{\mathbb{Q}}
\newcommand{\whG}{\widehat{G}}
\author[T. Koberda]{Thomas Koberda}
\address{Department of Mathematics\\ Harvard University\\ 1 Oxford St.\\ Cambridge, MA 02138 }
\email{ koberda@math.harvard.edu}
\subjclass{Primary 57M10; Secondary 57M99}
\keywords{Mapping class groups, braid groups, Nielsen-Thurston classification, representations of the mapping class group, automorphisms of free groups}
\begin{document}
\begin{abstract}
We study the action of the mapping class group on the real homology of finite covers of a topological surface.  We use the homological representation of the mapping class to construct a faithful infinite-dimensional representation of the mapping class group.  We show that this representation detects the Nielsen-Thurston classification of each mapping class.  We then discuss some examples that occur in the theory of braid groups and develop an analogous theory for automorphisms of free groups.  We close with some open problems.
\end{abstract}
\maketitle
\begin{center}
\today
\end{center}
\tableofcontents
\section{Introduction, motivation and statement of results}
The mapping class group of a surface $\Sigma$ is defined to be the group of self-homeomorphisms up to isotopy, and is usually denoted $\Mod(\Sigma)$.  We will often consider $\Sigma$ with a marked point which must be preserved by the isotopies, in which case the associated mapping class group is denoted $\Mod^1(\Sigma)$.  A major unsolved problem in the theory of mapping class groups is whether or not they are linear (cf. \cite{F}).  It is known that if $\Sigma$ is closed of genus $2$ then $\Mod(\Sigma)$ is linear by \cite{BB2}.

Some care must be taken when $\Sigma$ has punctures or boundary components in order to give a proper definition of the mapping class group.  When $\Sigma$ is an open disk with $n$ marked points (sometimes thought of as punctures), $\Mod(\Sigma)$ is usually defined to be the group of compactly supported homeomorphisms up to compactly supported isotopy and is often denoted by $B_n$, the braid group on $n$ strands.  Braid groups are known to be linear by \cite{B}.

There is an analogous problem for $\Aut(F_n)$, the automorphism group of a free group of finite rank.  $\Aut(F_n)$ contains a natural copy of $B_n$, but it is known that $\Aut(F_n)$ is not linear by \cite{FP}.

It is therefore natural to look at representations of the mapping class group, especially ones which arise in topological and geometric contexts, and analyze their faithfulness.  One obvious candidate is the representation of $\Mod(\Sigma)$ as a group of automorphisms of $H_1(\Sigma,\bZ)$.  We obtain a representation \[\Mod(\Sigma)\to Sp_{2g}(\bZ)\] when $\Sigma$ is closed of genus $g$, and otherwise \[\Mod(\Sigma)\to GL_n(\bZ)\] where $n=\rk H_1(\Sigma,\bZ)$.  This representation is given by taking a mapping class $\psi$, lifting it to a homeomorphism of $\Sigma$, and then looking at its action on the homology of $\Sigma$.  Therefore, one would be justified in writing the representation as \[\Mod(\Sigma)\to \Aut(H_1(\Sigma,\bZ)).\]

The homology representation of the mapping class group carries a great deal of information about mapping classes.  For instance, no finite order automorphism is contained in the kernel of the homology representation.  This is a consequence of the standard fact from the theory of mapping class groups (see \cite{FM}, for instance) that a finite order automorphism must fix a complex structure on $\Sigma$, and it is standard from algebraic geometry that holomorphic automorphisms of an algebraic curve act nontrivially on the homology of the curve.

Unfortunately, the homology representation also forgets a lot of information.  It is well-known that the homology representation of the mapping class group has a very large kernel, called the {\bf Torelli group} and is often denoted $\mathcal{I}(\Sigma)$.  The Torelli group is very complicated -- to study it, one often considers a filtration on $\mathcal{I}(\Sigma)$ called the {\bf Johnson filtration} (see \cite{J} and the references therein).  The Johnson filtration is defined in terms of the lower central series $\{\gamma_i(\pi_1(\Sigma))\}$.  We shall assume that the Johnson filtration lies inside of $\Mod^1(\Sigma)$ in order to avoid technical difficulties involved in giving proper definitions.  Recall that $\gamma_0(\pi_1(\Sigma))=\pi_1(\Sigma)$ and $\gamma_i(\pi_1(\Sigma))=[\pi_1(\Sigma),\gamma_{i-1}(\pi_1(\Sigma))]$.  The $k^{th}$ term $J_k$ of the Johnson filtration is the kernel of the natural map \[\Mod^1(\Sigma)\to\Aut(\pi_1(\Sigma)/\gamma_k(\pi_1(\Sigma)).\]  Note that $J_1=\mathcal{I}(\Sigma)$.  Johnson established that when $\Sigma$ is not a thrice- or four-times-punctured sphere or a once-punctured torus, $J_k$ is nontrivial for all $k$ and does not consist entirely of inner automorphisms.  The fact that \[\bigcap_k J_k=\{1\}\] is a consequence of the fact that \[\bigcap_k\gamma_k(\pi_1(\Sigma))=\{1\},\] a fact which is originally due to Magnus (see \cite{MKS} for a classical treatment).

The goal of this paper is to show that if one is willing to consider the actions of mapping classes on the homology of finite covers of $\Sigma$, one can recover much of the information that the homological representation forgets.  This is where the ``asymptotic" part of the title comes from: we see more and more nontrivial mapping classes acting nontrivially on the homology of covers as we look at higher and higher covers of $\Sigma$.  Asymptotic properties, especially asymptotic linearity, have been studied by many authors: see \cite{A}, for example.  Before we state the main results, we fix some notation.

Let $\Sigma=\Sigma_{g,n}$ be a connected hyperbolic type surface with genus $g$ and $n$ punctures.  By hyperbolic type, we mean that $\Sigma$ admits a finite volume complete hyperbolic metric, though we will not use any aspects of hyperbolic geometry in the main part of the paper.  All surfaces we consider will be orientable but we will not appeal to any fixed orientation.  Fix a basepoint $*\in\Sigma^0$, the interior of $\Sigma$.  Throughout we will denote the fundamental group of $\Sigma$ by $\pi_1(\Sigma,*)$, though we will suppress the basepoint in the notation.  Recall that the marked mapping class group of $\Sigma$ is defined as \[\Mod^1(\Sigma)=\pi_0(\Homeo^+(\Sigma),*),\] the group of orientation preserving homeomorphisms of $\Sigma$ up to  isotopy, along with a distinguished marked fixed during the isotopy and by each homeomorphism.  We do not necessarily puncture the surface at the marked point.  We identify homotopy classes of curves based at $*$ with elements of $\pi_1(\Sigma,*)$ and the free homotopy classes of essential closed curves in $\Sigma$ with conjugacy classes in $\pi_1(\Sigma)$.  This allows us to identify $\Mod^1(\Sigma)$ with a subgroup of $\Aut(\pi_1(\Sigma))$ and $\Mod(\Sigma)$ with a subgroup of $\Out(\pi_1(\Sigma))$.  We perform this identification once and for all, so that the lifts of mapping classes to covers are unambiguous as far as their actions on homology are concerned.  A note on terminology: if $\Sigma'\to\Sigma$ is a regular cover and $\pi_1(\Sigma')<\pi_1(\Sigma)$ is a characteristic subgroup, we say that the cover is a {\bf characteristic cover}.  We call the quotient of a group by a characteristic subgroup a {\bf characteristic quotient}.

Our first main result is:

\begin{thm}\label{t:nontrivial}
Let $\psi\in\Mod^1(\Sigma)$ and let $\gam$ be a finite characteristic quotient of $\pi_1(\Sigma)$ and let $\Sigma_{\gam}$ denote the associated covering space of $\Sigma$.  If $1\neq\psi\in\Aut(\gam)$ then $\psi$ acts nontrivially on $H_1(\Sigma_{\gam},\bZ)$.
\end{thm}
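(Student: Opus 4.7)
The plan is to argue by contradiction: suppose a canonical lift $\tilde\psi$ of $\psi$ to $\Sigma_\gam$ acts trivially on $H_1(\Sigma_\gam, \bZ)$, and derive a contradiction with the hypothesis that the induced $\bar\psi$ is not the identity in $\Aut(\gam)$.

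The first step is to produce the correct lift. Since $\psi \in \Mod^1(\Sigma)$ fixes the marked point $*$, I would fix a preimage $\tilde *$ of $*$ in $\Sigma_\gam$ and select the unique lift $\tilde\psi$ satisfying $\tilde\psi(\tilde *) = \tilde *$. With this normalization, $\tilde\psi$ acts on $\pi_1(\Sigma_\gam, \tilde *) = N$, where $N$ is the characteristic kernel of $\pi_1(\Sigma) \to \gam$, as the restriction $\psi|_N$, so the hypothesis $\tilde\psi_* = \mathrm{id}$ becomes the statement that $\psi|_N$ is trivial on $N^{\mathrm{ab}} = H_1(\Sigma_\gam, \bZ)$.

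The heart of the argument is the commutation relation
\[
\tilde\psi \circ T_g = T_{\bar\psi(g)} \circ \tilde\psi,
\]
where $T_g$ denotes the deck transformation associated to $g \in \gam$. This identity is verified at $\tilde *$ by lifting a loop representing $g$ and comparing it with the lift of its $\psi$-image, which represents $\bar\psi(g)$; uniqueness of lifts then extends it globally. Passing to the induced maps on $H_1(\Sigma_\gam, \bZ)$ and using $\tilde\psi_* = \mathrm{id}$ yields $(T_g)_* = (T_{\bar\psi(g)})_*$ for every $g \in \gam$. Since $\bar\psi \neq 1$, I may choose $g$ with $h := g^{-1}\bar\psi(g) \neq 1$, and the previous identity then forces $(T_h)_* = \mathrm{id}$ on $H_1(\Sigma_\gam, \bZ)$.

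To close the argument I would appeal to two facts about $T_h$. Since $T_h$ acts freely and with finite order on the hyperbolic-type surface $\Sigma_\gam$, comparing $L(T_h) = 0$ (no fixed points) against $\chi(\Sigma_\gam) < 0$ via the Lefschetz fixed point theorem rules out $T_h$ being isotopic to the identity, so it represents a nontrivial finite-order class in $\Mod(\Sigma_\gam)$. The main obstacle, and the crucial external input, is the fact recalled in the introduction that no nontrivial finite-order mapping class lies in the kernel of the homology representation; this directly contradicts $(T_h)_* = \mathrm{id}$. The most delicate point in carrying out this plan is checking that both the Lefschetz estimate and the cited finite-order result apply uniformly in the presence of punctures, but Nielsen realization on $\Sigma_\gam$ reduces this to the well-understood hyperbolic setting.
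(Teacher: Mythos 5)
Your argument is correct and is essentially the paper's proof: the commutation relation $\tilde\psi\circ T_g=T_{\bar\psi(g)}\circ\tilde\psi$ is exactly the equivariance the paper uses in the chain $\gamma\cdot d=\psi(\gamma\cdot d)=\psi(\gamma)\cdot d$, and your Lefschetz argument for the faithfulness of the deck group on $H_1(\Sigma_\gam,\bZ)$ is precisely how the paper proves its Lemma \ref{l:cw} (Chevalley--Weil), which the paper then invokes in the slightly stronger form that $H_1$ contains copies of the regular representation. The only cosmetic difference is that the paper contradicts the identity $\gamma\cdot d=\psi(\gamma)\cdot d$ on a single vector generating a regular representation, whereas you contradict the operator identity $(T_h)_*=\mathrm{id}$ via faithfulness; these are interchangeable here.
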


The fundamental group of a surface admits many characteristic quotients.  We can take the deck group to be solvable, nilpotent, or even a $p$-group.  When we decorate a cover with an adjective such as solvable, nilpotent, etc. we mean that the deck group has this property.  The fundamental observation is that if we take a sequence of quotients which exhaust $\pi_1(\Sigma)$, then each automorphism of $\pi_1(\Sigma)$ acts nontrivially on one of the quotients.  We immediately obtain:
\begin{cor}
Let $\{\Sigma_i\}$ be a sequence of exhausting, finite characteristic covers of $\Sigma$ and $\psi\in\Mod^1(\Sigma)$.  Then $\psi$ induces a nontrivial automorphism of $H_1(\Sigma_i,\bZ)$ for some $i$.  In particular, we may assume $\psi$ acts nontrivially on the homology of a solvable, nilpotent or even $p$-cover for any prime $p$.
\end{cor}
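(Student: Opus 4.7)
The plan is to reduce the corollary directly to Theorem~\ref{t:nontrivial} via a ``sufficient resolution'' observation: exhaustion of $\pi_1(\Sigma)$ by the characteristic subgroups $N_i$ associated with the covers $\Sigma_i$ forces a nontrivial mapping class to eventually act nontrivially on one of the finite characteristic quotients $\gam_i = \pi_1(\Sigma)/N_i$, at which point Theorem~\ref{t:nontrivial} supplies the desired nontrivial action on $H_1(\Sigma_i,\bZ)$.

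First I would reduce to the case $\psi\neq 1$ (otherwise there is nothing to prove). Since the paper identifies $\Mod^1(\Sigma)$ with a subgroup of $\Aut(\pi_1(\Sigma))$, a nontrivial mapping class acts nontrivially on $\pi_1(\Sigma)$, so there exists $g \in \pi_1(\Sigma)$ with $\psi(g)g^{-1} \neq 1$. The exhaustion hypothesis is precisely that $\bigcap_i N_i = \{1\}$, so for $i$ sufficiently large $\psi(g)g^{-1} \notin N_i$. Consequently the induced automorphism of $\gam_i$ moves the coset $gN_i$; in particular it is nontrivial in $\Aut(\gam_i)$, and Theorem~\ref{t:nontrivial} immediately yields a nontrivial action on $H_1(\Sigma_i,\bZ)$.

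For the ``in particular'' clause I must exhibit exhausting sequences of finite characteristic covers of $\Sigma$ whose deck groups are nilpotent, solvable, or $p$-groups. For the nilpotent case I would combine the Magnus fact cited in the paper, $\bigcap_k \gamma_k(\pi_1(\Sigma)) = \{1\}$, with the observation that each $\pi_1(\Sigma)/\gamma_k(\pi_1(\Sigma))$ is finitely generated nilpotent and hence residually finite. Pulling back to $\pi_1(\Sigma)$ a descending chain of characteristic finite-index subgroups of $\pi_1(\Sigma)/\gamma_k(\pi_1(\Sigma))$ with trivial intersection, and then letting $k$ vary, produces a nested sequence of finite characteristic nilpotent (a fortiori solvable) covers exhausting $\pi_1(\Sigma)$. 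For $p$-covers the analogous argument goes through using the mod-$p$ lower central series of $\pi_1(\Sigma)$, which is characteristic and has trivial intersection; equivalently, surface groups are residually $p$ for every prime $p$, a classical fact.

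The argument is thus essentially a formal consequence of Theorem~\ref{t:nontrivial}, together with residual nilpotence and residual $p$-ness of surface groups. The only genuine content beyond Theorem~\ref{t:nontrivial} lies in producing the exhausting towers within each prescribed class of finite quotients, and this is the step I expect to require the most care; fortunately both residual finiteness properties are classical and the nilpotent case is already quoted in the paper.
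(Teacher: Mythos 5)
Your proposal is correct and is essentially the paper's own argument: the paper derives the corollary immediately from Theorem~\ref{t:nontrivial} via the observation that an exhausting sequence of characteristic quotients must eventually see a nontrivial automorphism, and disposes of the solvable/nilpotent/$p$ cases by citing residual solvability, nilpotence, and $p$-ness of surface groups. Your write-up merely makes these residual finiteness facts and the construction of the exhausting towers more explicit.
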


Let $\Sigma'\to\Sigma$ be a characteristic cover with deck group $\gam$.  It is not immediately clear that the action of $\psi$ on $H_1(\Sigma',\bZ)$ will not coincide with the action of some element of $\gam$.  To this end, we have the following extension of Theorem \ref{t:nontrivial}:
\begin{thm}\label{t:inner}
Let $\psi\in \Mod^1(\Sigma)$.  Suppose that for every finite characteristic cover $\Sigma'\to\Sigma$ the action of $\psi$ on $H_1(\Sigma',\bZ)$ coincides with that of an element of the deck group.  Then $\psi$ induces an inner automorphism of $\pi_1(\Sigma)$.
\end{thm}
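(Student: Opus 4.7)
The plan is to argue by contrapositive, invoking Theorem \ref{t:nontrivial} together with the classical fact, due to Grossman, that $\Out(\pi_1(\Sigma))$ is residually finite in the strong sense that every non-inner element of $\Aut(\pi_1(\Sigma))$ induces a non-inner automorphism on some finite characteristic quotient of $\pi_1(\Sigma)$. Suppose then that $\psi$ is not inner, and choose a finite characteristic quotient $\gam = \pi_1(\Sigma)/K$ on which the induced $\bar{\psi} \in \Aut(\gam)$ fails to be inner; this is the cover at which I will violate the hypothesis.

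By the standing hypothesis applied to $\Sigma_{\gam}$ there is some $g \in \gam$ whose action on $H_1(\Sigma_{\gam}, \bZ) = K^{\mathrm{ab}}$ agrees with that of $\psi$. Pick any lift $\tilde{g} \in \pi_1(\Sigma)$ and set $\phi = \text{inn}(\tilde{g})^{-1} \circ \psi$. Since point-pushing embeds $\pi_1(\Sigma)$ into $\Mod^1(\Sigma)$ as inner automorphisms, $\phi$ still lies in $\Mod^1(\Sigma)$, and since $K$ is characteristic in $\pi_1(\Sigma)$ it is preserved by both $\psi$ and $\text{inn}(\tilde{g})^{-1}$. A direct check shows that the induced action of $\phi$ on $K^{\mathrm{ab}} = H_1(\Sigma_{\gam}, \bZ)$ is trivial: for any $\bar{h} \in K^{\mathrm{ab}}$ one has $\phi(\bar{h}) = g^{-1} \cdot \psi(\bar{h}) = g^{-1} \cdot (g \cdot \bar{h}) = \bar{h}$ by the defining property of $g$.

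At this point I would apply the contrapositive of Theorem \ref{t:nontrivial} to $\phi$ and the characteristic quotient $\gam$: triviality of $\phi$ on $H_1(\Sigma_{\gam}, \bZ)$ forces $\phi$ to be trivial in $\Aut(\gam)$, so $\bar{\psi}$ equals $\text{inn}(g)$ as an automorphism of $\gam$. This contradicts the initial choice of $\gam$, and hence $\psi$ must have been inner to begin with.

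The principal obstacle I anticipate is to pin down the precise form of Grossman's theorem required: one needs the finite quotient witnessing non-innerness of $\bar{\psi}$ to be a characteristic quotient of $\pi_1(\Sigma)$ itself, rather than merely an arbitrary finite quotient of $\Aut(\pi_1(\Sigma))$ or $\Out(\pi_1(\Sigma))$. For closed surfaces this is contained in Grossman's original argument via conjugacy separability, and for the marked and punctured cases it follows from the same techniques; verifying this carefully is what makes the final step go through without modifying Theorem \ref{t:nontrivial}.
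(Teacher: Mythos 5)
Your proof is correct and follows essentially the same route as the paper: both arguments first use the residual finiteness of $\Out(\pi_1(\Sigma))$ (Grossman, packaged in the paper as the corollary producing a finite \emph{characteristic} quotient $\gam$ on which $\psi$ is non-inner) and then exploit the Chevalley--Weil regular-representation structure of $H_1(\Sigma_{\gam},\bQ)$ to show that agreement with a deck transformation on homology forces innerness on $\gam$. Your device of precomposing with $\mathrm{inn}(\tilde{g})^{-1}$ so as to quote Theorem \ref{t:nontrivial} directly is just a clean repackaging of the paper's computation with a vector $d$ generating a regular representation.
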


Theorem \ref{t:nontrivial}, remains true for arbitrary automorphisms of $\pi_1(\Sigma)$ regardless of whether or not they are actually induced by homeomorphisms of $\Sigma$.  We will generally not distinguish notationally between a homeomorphism and its isotopy class.  We will consistently appeal to the fact that the map $\Mod^1(\Sigma)\to\Aut(\pi_1(\Sigma))$ is injective.  We remark briefly that if we forget $*\in\Sigma$ during isotopies, we get the standard mapping class group $\Mod(\Sigma)$.  Since changing the basepoint for an automorphism $\psi$ of $\pi_1(\Sigma)$ is tantamount to replacing $\psi$ by a conjugate, we obtain an injective map $\Mod(\Sigma)\to\Out(\pi_1(\Sigma))$.  It is well-known that the map $\Mod^1(\Sigma)\to\Mod(\Sigma)$ does not split (cf. \cite{Bi}).

Theorem \ref{t:nontrivial} can be viewed as positive evidence towards a question which McMullen asked the author.  McMullen has since answered the question in the strongest negative sense possible in \cite{Mc}.  To state the question properly, we develop some notation.  Let $\psi\in\Mod(\Sigma)$ be a pseudo-Anosov mapping class and let $K_{\psi}=K$ be its {\bf geometric dilatation}.  A reader unfamiliar with pseudo-Anosov homeomorphisms should consult the definitions given below.  The dilatation of the pseudo-Anosov map $\psi$ is the exponential of the entropy for the least-entropy representative of $\psi$ in its isotopy class.  Consider the collection of $\psi$-invariant finite covers of $\Sigma$, $\{\Sigma'\}\to\Sigma$.  We fix a lift of $\psi$ to each $\Sigma'$, and we let $K_H(\Sigma')$ be the {\bf homological dilatation} of $\psi$, namely its spectral radius as an automorphism of $H_1(\Sigma',\bR)$.  Since $K$ and $K_H$ are can be defined in terms of word growth in groups, it follows that $K$ is constant over this family and both $K_H$ and $K$ are independent of the choice of lift (cf. \cite{FLP}).  McMullen's question can be stated as:

\begin{quest}\label{q:homdil}
Is \[\sup_{\Sigma'\to\Sigma}K_H(\Sigma')=K?\]
\end{quest}

The original motivation for this question was the work of Friedl and Vidussi in \cite{FV}, which shows that the fibering of $3$-manifolds is detected by twisted Alexander polynomials (see \cite{FK} for more details about the twisted Alexander polynomial).  Since twisted Alexander polynomials encode a large amount of data about finite covers, it seems that finite covers of a surface should provide a wealth of information about mapping classes.  He was also motivated by the work of Kazhdan in \cite{Ka}, also \cite{Rh}, where it is shown that a hyperbolic metric on a Riemann surface can be recovered from the Jacobians of its finite covers.

McMullen proves:
\begin{thm}
Suppose that $\psi$ is a pseudo-Anosov homeomorphism of a surface with dilatation $K$.  Then either $K$ is the spectral radius of the action of $\psi$ on a finite cover of $\Sigma$, or there is an $0\leq\al<1$ such that \[\sup_{\Sigma'\to\Sigma}K_H(\Sigma')=\al K.\]
\end{thm}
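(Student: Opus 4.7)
The starting observation is that one always has the trivial upper bound $K_H(\Sigma')\le K$ for every $\psi$-invariant finite cover $\Sigma'\to\Sigma$: the action of $\psi$ on $H_1(\Sigma',\bR)$ is a sub-quotient of its action on a finite-index subgroup of $\pi_1(\Sigma)$, and spectral radii cannot increase under passage to abelianization. Set $\al=\sup_{\Sigma'} K_H(\Sigma')/K$, which lies in $[0,1]$. The dichotomy to establish is then: either $\al=1$ and the supremum is actually attained (so that $K=K_H(\Sigma')$ for some finite cover, i.e.\ case one of the theorem), or $\al<1$ and case two holds automatically. In particular, the nontrivial content is that the supremum, whenever it equals $K$, is realized on a specific cover.

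My plan is to decompose the homological action on a $\psi$-invariant characteristic cover $\Sigma'$ with deck group $\gam$ into isotypic pieces indexed by the irreducible representations $\rho\in\widehat{\gam}$, obtaining twisted homology groups $H_1(\Sigma;\rho)$ on which $\psi$ acts with spectral radius $\lambda(\rho)$. Then $K_H(\Sigma')=\max_{\rho\in\widehat{\gam}}\lambda(\rho)$, and hence
\[
\sup_{\Sigma'\to\Sigma} K_H(\Sigma')=\sup_{\rho}\lambda(\rho),
\]
the supremum running over all finite-dimensional unitary representations of $\pi_1(\Sigma)$ with finite image that are stabilized by $\psi$. This reformulation has the advantage of replacing a sup over covers by a sup over a well-structured parameter space, namely finite-dimensional continuous representations of the profinite completion $\widehat{\pi_1(\Sigma)}$.

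The main step is to show that if $\sup_\rho \lambda(\rho)=K$ then the supremum is attained by some $\rho$. The natural tool is a compactness argument: given a sequence $\rho_n$ with $\lambda(\rho_n)\to K$, one extracts a limit $\rho$ in the space of continuous finite-dimensional representations of $\widehat{\pi_1(\Sigma)}$ (exploiting the fact that the image in the unitary group has finite order in each bounded dimension), and argues that $\lambda$ is upper semi-continuous on this space, so $\lambda(\rho)\ge K$ while $\lambda(\rho)\le K$ by the trivial bound. Combined with the arithmetic observation that each $\lambda(\rho)$ is an algebraic integer of degree bounded by $\dim\rho$, one can try to squeeze out the existence of an attaining representation of \emph{finite} dimension, rather than a merely profinite limit.

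The principal obstacle is making the attainment step precise. The issue is that the trivial inequality $K_H(\Sigma')\le K$ is strict for every individual cover in most known examples, so one must rule out a scenario in which the dimensions of the $\rho_n$ tend to infinity and $\lambda(\rho_n)$ approaches $K$ only asymptotically, without any particular $\rho_n$ attaining the value $K$. Overcoming this demands a genuine rigidity statement, whether via spectral theory for the action on the unstable foliation of $\psi$ (which detects $K$ as the stretch factor, and whose contribution to finite-dimensional homology must be all-or-nothing), or via a direct analysis of the possible limit points of the values $\lambda(\rho)$ inside the set of algebraic integers. Producing that rigidity is what I expect to be the heart of the argument.
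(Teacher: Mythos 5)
First, a caveat: this theorem is not proved in the paper at all --- it is quoted verbatim from McMullen's preprint [Mc] as the (negative) answer to Question~\ref{q:homdil}, so there is no in-paper argument to compare yours against. Judging your proposal on its own terms, the framing is right: the inequality $K_H(\Sigma')\leq K$ does follow from the Lipschitz bound (Theorem~\ref{t:bound} plus the existence of a $K$--Lipschitz representative), so the entire content is the attainment statement, namely that $\sup_{\Sigma'}K_H(\Sigma')=K$ forces $K=K_H(\Sigma')$ for some actual finite cover. The reduction to twisted homology groups $H_1(\Sigma;\rho)$ is also a sensible reformulation (modulo the bookkeeping that $\psi$ may permute the isotypic components, so one must pass to powers of $\psi$ on the return maps of each cycle).

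The gap is exactly where you flag it, and the compactness strategy you sketch cannot close it. There is no single compact parameter space here: the representations $\rho_n$ live in unitary groups of unbounded dimension, so ``extracting a limit $\rho$'' and invoking upper semicontinuity of $\lambda$ has no meaning --- semicontinuity of the spectral radius is a statement about a convergent sequence of operators on a fixed space, not about a sequence of operators on spaces of growing dimension. Likewise the algebraic-integer observation buys nothing, since the degree bound grows with $\dim\rho_n$ and algebraic integers of unbounded degree can accumulate at any real number $\geq 1$. What is actually needed is the geometric rigidity you allude to in your last sentence: the eigenvalue $K$ can appear in $H_1(\Sigma',\bR)$ only when the invariant foliations of the lift of $\psi$ become orientable on $\Sigma'$ (equivalently, the Teichm\"uller quadratic differential lifts to the square of an abelian differential --- compare Proposition~\ref{l:monogon} in the paper, which treats the case of only simple poles and even-order zeros), and when orientability fails on every finite cover one must produce a \emph{uniform} spectral gap below $K$, which McMullen obtains from Hodge-theoretic/curvature estimates on the Teichm\"uller geodesic rather than from any soft limiting argument. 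Without that input, your argument establishes only the tautology that $\al\in[0,1]$, not the dichotomy.
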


Even though Question \ref{q:homdil} is completely answered, one can still attempt to understand the action of $\Mod^1(\Sigma)$ on the homology of finite covers.  We will see later that at least \[\sup_{\Sigma'\to\Sigma}K_H(\Sigma')\leq K,\] so that the action of a mapping class on the virtual homology of $\Sigma$ is a reasonable lower bound for the value of its dilatation.  This fact can also be seen in \cite{Roy2}, but we will develop a theory which is more general and is divorced from the fact that $\psi$ is a homeomorphism of an actual surface.  We will prove:
\begin{thm}\label{t:bound}
Let $M$ be a compact manifold equipped with a metric which is compatible with the manifold topology, and let $\psi$ be a $K$--Lipschitz  homeomorphism of $M$.  Let $K_{H,i}$ denote the homological dilatation of $\psi$ on the $i^{th}$ homology with real coefficients.  Then $K_{H,i}\leq K^i$.
\end{thm}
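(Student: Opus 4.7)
The plan is to introduce on $H_i(M;\bR)$ a norm that measures $i$-dimensional mass of cycle representatives, show that this norm is scaled by at most $K^i$ under $\psi_*$, and then apply Gelfand's spectral radius formula. Concretely, I represent a real homology class by a Lipschitz singular cycle $c = \sum_j a_j \sigma_j$, where each $\sigma_j:\Delta^i\to M$ is a Lipschitz singular simplex. Define the mass of $\sigma_j$ to be $\mu(\sigma_j) = \mathrm{Lip}(\sigma_j)^i$, which controls (up to a dimensional constant) the Hausdorff $i$-measure of its image; extend linearly by $\mu(c) = \sum_j |a_j|\,\mu(\sigma_j)$; and set $\|\alpha\| = \inf\{\mu(c) : c \text{ represents } \alpha\}$. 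Since every real singular homology class admits a smooth (hence Lipschitz) cycle representative, this is defined on all of $H_i(M;\bR)$.

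The key Lipschitz estimate is immediate: $\mathrm{Lip}(\psi \circ \sigma_j) \leq K\cdot\mathrm{Lip}(\sigma_j)$ gives $\mu(\psi\circ\sigma_j) \leq K^i \,\mu(\sigma_j)$ for every Lipschitz simplex. Extending linearly and then infing over cycle representatives of a class yields
\[
\|\psi_*\alpha\| \leq K^i \|\alpha\|
\]
for every $\alpha \in H_i(M;\bR)$, and by iteration $\|\psi_*^n \alpha\| \leq K^{ni}\|\alpha\|$ for all $n \geq 1$.

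The main technical step --- and what I expect to be the hard part of the proof --- is to verify that $\|\cdot\|$ is a genuine norm on the finite-dimensional real vector space $H_i(M;\bR)$, not merely a seminorm. I would handle this by duality: any nonzero class $\alpha \in H_i(M;\bR)$ pairs nontrivially with some $[\omega] \in H^i(M;\bR)$, and representing $[\omega]$ by a bounded closed cocycle (for example a smooth closed $i$-form after fixing a smooth structure on $M$, or more generally a bounded Lipschitz cocycle), the pairing satisfies $\left|\int_c\omega\right| \leq C(\omega)\,\mu(c)$ for any Lipschitz cycle $c$ representing $\alpha$. Thus $\|\alpha\| \geq |\langle[\omega],\alpha\rangle|/C(\omega) > 0$. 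In the smooth Riemannian setting this is classical via Federer--Fleming or Hodge theory; in the bare metric setting one appeals to a suitable theory of currents on metric spaces in the spirit of Ambrosio--Kirchheim.

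Finally, since $\|\cdot\|$ is a norm on a finite-dimensional real vector space, it is equivalent to any other norm, and Gelfand's formula applied to the induced operator norm yields
\[
K_{H,i} = \lim_{n\to\infty} \|\psi_*^n\|_{\mathrm{op}}^{1/n} \leq \lim_{n\to\infty} (K^{ni})^{1/n} = K^i,
\]
which is the desired bound. The conceptual content is that the $i$-dimensional volume growth of iterates of a $K$-Lipschitz self-map is at most $K^{ni}$, and this growth dominates the spectral radius of the induced operator on $H_i(M;\bR)$.
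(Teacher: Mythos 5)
Your argument is correct in substance and reaches the bound the same conceptual way the paper does --- a $K$--Lipschitz map increases $i$--dimensional volume of cycles by at most a factor of $K^i$, and volume growth dominates spectral radius --- but the implementation is genuinely different. The paper fixes a fine simplicial decomposition with all $i$--simplices of comparable volume, pushes the image cycle $f^n(z)$ back into the $i$--skeleton at the cost of a uniform constant $C$, and counts simplices: this is in effect the $\ell^1$--norm on the \emph{finite-dimensional} simplicial chain groups, so the induced infimum-seminorm on $H_i$ is automatically a norm (the set of representatives of a class is an affine subspace of a finite-dimensional normed space, so the infimum vanishes only for the zero class), and the non-degeneracy issue you correctly single out as the crux simply does not arise. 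You instead work with the infinite-dimensional space of singular Lipschitz chains and the mass norm $\mathrm{Lip}(\sigma)^i$, where the Lipschitz estimate is cleaner (no homotopy-to-the-skeleton step, no equalization of simplex sizes) but non-degeneracy must be earned, via pairing with bounded closed cocycles; that is rigorous in the smooth Riemannian setting by de Rham theory and the estimate $\bigl|\int_c\omega\bigr|\leq C(\omega)\,\mu(c)$, but in the bare hypotheses of the theorem (a topological manifold with a compatible metric) both your appeal to smooth representatives and your appeal to Ambrosio--Kirchheim currents require a smooth or at least Lipschitz structure that is not given --- though the paper's own proof has the same implicit restriction (it assumes a simplicial decomposition adapted to the metric), and in the paper's actual applications $M$ is a surface, where both arguments go through without difficulty. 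In short: same theorem, same mechanism, but your route trades the paper's combinatorial bookkeeping for a duality argument, and the price is precisely the non-degeneracy lemma that the finite-dimensionality of simplicial chains gives the paper for free.
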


The standard homology representation of the mapping class group carries a great deal of information regarding the action of homeomorphisms on simple closed curves on $\Sigma$.  Recall the {\bf Nielsen-Thurston classification} of mapping classes (cf. \cite{CB}, \cite{FLP}, \cite{FM} for instance).  It says that each mapping class $\psi\in\Mod(\Sigma)$ can be classified by its action on the set of simple closed curves on a surface: a mapping class $\psi$ is called {\bf finite order} if has finite order in $\Mod(\Sigma)$.  It is a nontrivial fact that then it has a representative which is a finite order homeomorphism of $\Sigma$.  A mapping class $\psi$ is called {\bf reducible} if there is a finite collection $\mathcal{C}$ of disjoint, distinct, nonperipheral isotopy classes of simple closed curves which is preserved by $\psi$.  An Euler characteristic argument shows that the order of $\mathcal{C}$ can be bounded in terms of the topology of $\Sigma$.  In particular, some power of $\psi$ fixes the isotopy class of a nonperipheral simple closed curve.  A mapping class $\psi$ is called {\bf pseudo-Anosov} if it has infinite order and is not reducible.

One of the main results of this paper is that the homology of all finite covers of $\Sigma$ is sufficiently rich to reveal the Nielsen-Thurston of each mapping class.  Before stating the theorems precisely, we will give some more background and setup.

The homology representation of $\Mod(\Sigma)$ can also determine whether certain mapping classes are pseudo-Anosov.  Recall that there is a well-known {\bf Casson-Bleiler criterion} which can certify that certain mapping classes are pseudo-Anosov (see \cite{CB}, also \cite{Ma}).  To use the criterion, one take the image of $\psi$ under the homological representation computes its characteristic polynomial $p_{\psi}(t)$.  The criterion asserts:

\begin{prop}
Suppose that $p_{\psi}(t)$ is irreducible over $\bQ$, none of its zeros are roots of unity, and that that $p_{\psi}(t)$ is not a polynomial in $t^n$ for any $n>1$.  Then $\psi$ is a pseudo-Anosov mapping class.
\end{prop}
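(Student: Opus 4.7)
The plan is to invoke the Nielsen--Thurston trichotomy and show that $\psi$ cannot be either finite order or reducible, which forces it to be pseudo-Anosov. The hypotheses on $p_\psi(t)$ are designed precisely to rule out one obstruction apiece: roots of unity show up for finite order maps, reducibility with homologically essential reducing curves produces a proper rational factor, and reducibility with purely separating reducing curves produces a polynomial in $t^n$.

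First, if $\psi$ has finite order in $\Mod(\Sigma)$, then its induced action on $H_1(\Sigma, \bQ)$ has finite order, so $p_\psi(t) \mid t^N - 1$ for some $N$; all its roots are then roots of unity, contradicting the second hypothesis. Now suppose $\psi$ is reducible with invariant multicurve $\mathcal{C} = \{c_1, \ldots, c_k\}$, and let $V \subseteq H_1(\Sigma, \bQ)$ be the span of the classes $[c_1], \ldots, [c_k]$. Since $\psi$ permutes $\mathcal{C}$ (with possible orientation reversals), $V$ is $\psi$-invariant and $\psi|_V$ acts as a signed permutation of the spanning set, so every eigenvalue on $V$ is a root of unity. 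If $V = H_1(\Sigma, \bQ)$ this contradicts the no-roots-of-unity hypothesis; if $V$ is a proper nontrivial subspace, the characteristic polynomial of $\psi|_V$ is a nontrivial factor of $p_\psi$ over $\bQ$, contradicting irreducibility.

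It remains to treat the subcase $V = 0$, where every $c_i$ is null-homologous and therefore separating. Cutting $\Sigma$ along $\mathcal{C}$ yields subsurfaces $S_1, \ldots, S_m$ with $m \geq 2$, permuted by $\psi$ according to some $\sigma \in \mathrm{Sym}(m)$. A Mayer--Vietoris computation, using that each $[c_i]$ vanishes in $H_1(\Sigma, \bQ)$ to kill the connecting map on the relevant classes, produces a $\psi$-equivariant direct sum decomposition
\[
H_1(\Sigma, \bQ) \;\cong\; \bigoplus_{j=1}^{m} H_1(S_j, \bQ),
\]
on which $\psi$ acts via the isomorphisms $H_1(S_j, \bQ) \to H_1(S_{\sigma(j)}, \bQ)$ induced by the homeomorphism. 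If $\sigma$ has more than one orbit, the decomposition further splits into $\psi$-invariant summands, giving a nontrivial factorization of $p_\psi$ over $\bQ$ and contradicting irreducibility. If $\sigma$ is a single cycle of length $m \geq 2$, then $\psi$ is a block cyclic permutation, and a standard block-circulant determinant computation shows that $p_\psi(t) = \det(t^m I - T)$ for some linear map $T$ on $H_1(S_1, \bQ)$, hence is a polynomial in $t^m$, contradicting the third hypothesis.

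The main obstacle is the separating subcase: one must verify both the direct sum decomposition of $H_1(\Sigma, \bQ)$ (which genuinely uses $V = 0$ to trivialize the Mayer--Vietoris boundary map) and the linear algebra fact that a cyclic block permutation has characteristic polynomial in $t^m$. Once these are in hand, the three hypotheses of the proposition eliminate the three non-pseudo-Anosov possibilities (finite order, reducible with a homologically essential reducing curve, and reducible with only separating reducing curves), completing the argument.
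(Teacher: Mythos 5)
The paper does not actually prove this proposition; it quotes it as the well-known Casson--Bleiler criterion and cites \cite{CB}, so your attempt can only be compared with the standard argument --- which is indeed the one you are following: run the Nielsen--Thurston trichotomy and let each hypothesis kill one failure mode. Your treatment of the finite order case and of the case where the reducing curves span a nonzero subspace $V\subseteq H_1(\Sigma,\bQ)$ is correct.

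The gap is in the separating subcase. The claimed Mayer--Vietoris isomorphism $H_1(\Sigma,\bQ)\cong\bigoplus_j H_1(S_j,\bQ)$ is false as soon as $\mathcal{C}$ has more than one component. A boundary curve $c_i$ of a piece $S_j$ is null-homologous in $\Sigma$ but generally \emph{not} in $S_j$ (only the sum of all boundary components of $S_j$ vanishes there), so the map $H_1(\sqcup_i c_i)\to\bigoplus_j H_1(S_j)$ is not zero and the sequence does not degenerate as you assert. Concretely, since every $c_i$ separates, the dual graph of the decomposition is a tree, so $m=k+1$, and $\sum_j\dim H_1(S_j,\bQ)=2\sum_j g_j+\sum_j b_j-m=2g+2k-(k+1)=2g+k-1$, which exceeds $2g=\dim H_1(\Sigma,\bQ)$ whenever $k\geq 2$. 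The correct objects are $W_j:=\mathrm{im}\bigl(H_1(S_j,\bQ)\to H_1(\Sigma,\bQ)\bigr)$: these are pairwise orthogonal symplectic subspaces of ranks $2g_j$, so $H_1(\Sigma,\bQ)=\bigoplus_j W_j$ and $\psi$ permutes them via $\sigma$. Your orbit/block-cyclic analysis then goes through, but you must also handle the genus-zero pieces, for which $W_j=0$: restrict $\sigma$ to the positive-genus pieces; more than one orbit gives a proper nonzero invariant subspace, a single orbit of length $r\geq 2$ makes $p_\psi$ a polynomial in $t^r$ by your block-circulant computation, and a single orbit of length $1$ cannot occur because the dual tree has at least two leaves and each leaf (a piece with one boundary circle) must have positive genus, else it would be a disk and its boundary curve inessential. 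With these repairs the argument is complete for closed surfaces; for punctured $\Sigma$ additional care is needed with peripheral and puncture-encircling curves, which the paper's formulation glosses over as well.
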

We will prove in a precise sense that no na\"ive generalization of the Casson-Bleiler criterion could possibly hold.
The Casson-Bleiler criterion will not detect all pseudo-Anosov homeomorphisms, as it is known that that $\mathcal{I}(\Sigma)$ contains pseudo-Anosov mapping classes.  The fact that $\mathcal{I}(\Sigma)$ contains pseudo-Anosov classes is a reflection of a more general fact due to Ivanov (see \cite{I}):
\begin{prop}\label{p:ivanov}
Let $\Sigma$ be a hyperbolic type surface which is not the thrice-punctured sphere.  Suppose $H<\Mod(\Sigma)$ is non-central and normal (the former condition is unnecessary when the genus of $\Sigma$ is greater than $2$).  Then every coset of $H$ in $\Mod(\Sigma)$ contains a pseudo-Anosov mapping class.
\end{prop}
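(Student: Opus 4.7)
The plan is to exploit the dynamics of pseudo-Anosov mapping classes on Thurston's space of projective measured foliations $\mathcal{PMF}(\Sigma)$. A pseudo-Anosov class $\phi$ fixes exactly two points $\ff^\pm_\phi \in \mathcal{PMF}(\Sigma)$ and exhibits north-south dynamics with respect to this pair, while every other nontrivial mapping class either has finite order or preserves a multicurve, producing a qualitatively different fixed-point structure on $\mathcal{PMF}(\Sigma)$. The argument naturally divides into two steps: first, produce a pseudo-Anosov element inside $H$; second, use that element to exhibit a pseudo-Anosov representative in every coset $gH$.

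For the first step I would pick a non-central element $h \in H$ (which exists by hypothesis, and is automatic when the genus of $\Sigma$ exceeds two) together with an auxiliary pseudo-Anosov $\phi \in \Mod(\Sigma)$. Since the stabilizer of the pair $\{\ff^+_\phi, \ff^-_\phi\}$ in $\Mod(\Sigma)$ is virtually cyclic, $\phi$ can be chosen so that $h$ does not preserve this pair. By normality the commutator $[\phi^n, h] = \phi^n h \phi^{-n} h^{-1}$ lies in $H$ for every $n$. For $n$ sufficiently large, a ping-pong estimate on $\mathcal{PMF}(\Sigma)$ should show that $[\phi^n, h]$ acquires an attracting fixed point near $\ff^+_\phi$ and a repelling fixed point near $h(\ff^-_\phi)$, giving genuine north-south dynamics and hence a pseudo-Anosov element of $H$. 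The exclusion of the thrice-punctured sphere guarantees that $\Mod(\Sigma)$ contains pseudo-Anosov classes at all, so that this construction has content.

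For the second step, let $\psi \in H$ be the pseudo-Anosov produced above and fix an arbitrary coset $gH$. I would consider the family $\{g\psi^n\}_{n \geq 1} \subset gH$. Because $\psi$ has strong north-south dynamics, $\psi^n$ carries all of $\mathcal{PMF}(\Sigma) \setminus \{\ff^-_\psi\}$ into arbitrarily small neighborhoods of $\ff^+_\psi$ as $n \to \infty$, and then $g$ moves this neighborhood onto a small neighborhood of $g(\ff^+_\psi)$. A Brouwer-type fixed-point argument on $\mathcal{PMF}(\Sigma)$, together with the dual contraction of $(g\psi^n)^{-1}$ toward $\psi^{-n}g^{-1}(\ff^-_\psi)$, provides an attracting and a repelling fixed point with strict north-south dynamics for $g\psi^n$ once $n$ is sufficiently large. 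Any mapping class with such dynamics on $\mathcal{PMF}(\Sigma)$ must be pseudo-Anosov.

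The main obstacle is the first step: we are constrained algebraically to remain inside $H$, so the only freedom is to conjugate $h$ by elements of $\Mod(\Sigma)$ and form products. Verifying that $[\phi^n, h]$ actually exhibits pseudo-Anosov dynamics, rather than being a reducible class with two incidentally placed fixed foliations, requires careful control of the contraction estimates on $\mathcal{PMF}(\Sigma)$; and it is precisely here that the low-genus exceptions and the non-centrality hypothesis become essential, since without them $h$ could act trivially on the relevant portion of $\mathcal{PMF}(\Sigma)$ and the ping-pong would degenerate.
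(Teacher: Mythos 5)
This proposition is not proved in the paper at all: it is quoted as Ivanov's theorem and referenced to \cite{I}, so there is no in-paper argument to compare against. What you have written is a reconstruction of the standard proof (Ivanov's, closely related to McCarthy's ping-pong arguments), and the overall strategy --- first manufacture a pseudo-Anosov inside $H$ by playing a high power of an auxiliary pseudo-Anosov against a non-central $h\in H$, then push it into an arbitrary coset --- is the correct one.

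There is, however, one step that fails as literally stated. In your second step you fix the coset $gH$ and claim that $g\psi^n$ is pseudo-Anosov for all large $n$. This is false without the hypothesis $g(\mathcal{F}^+_\psi)\neq\mathcal{F}^-_\psi$: if, for example, $g$ is a finite-order element conjugating $\psi$ to $\psi^{-1}$ (so that $g$ sends $\mathcal{F}^+_\psi$ to $\mathcal{F}^-_\psi$), then $(g\psi^n)^2=(g\psi^n g^{-1})\,g^2\,\psi^n=\psi^{-n}g^2\psi^n$ is conjugate to $g^2$, so $g\psi^n$ has finite order for every $n$. Dynamically this is exactly where your Brouwer argument degenerates: the attracting neighborhood of $g(\mathcal{F}^+_\psi)$ and the repelling neighborhood of $\mathcal{F}^-_\psi$ cannot be separated. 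The repair uses the normality of $H$ a second time: replace $\psi$ by a conjugate $\tau\psi\tau^{-1}\in H$ (or its inverse) with $\tau$ chosen so that $g$ does not send the attracting foliation of the new element to its repelling one; showing such a $\tau$ exists is itself a small argument about the $\Mod(\Sigma)$-orbit of the fixed pair and is part of what Ivanov actually proves. A second, subtler point: ``an attracting and a repelling fixed point with north--south dynamics implies pseudo-Anosov'' is not automatic from the fixed points being \emph{near} $g(\mathcal{F}^+_\psi)$, because projective classes of multicurves are dense in $\mathcal{PMF}(\Sigma)$, so proximity to a filling foliation does not by itself rule out that the attractor is the class of a reduction system. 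One needs the criterion that a reducible class fixes the point of $\mathcal{PMF}(\Sigma)$ carried by its canonical reduction system, that under strict north--south dynamics any fixed point outside the repelling neighborhood must be the attractor itself, and that the attractor produced here is arational. Your first step is essentially sound once one invokes the fact that the stabilizer of a single arational foliation is virtually cyclic with all infinite-order elements sharing the same fixed pair (this is what upgrades ``$h$ does not preserve the pair'' to full independence of $\phi$ and $h\phi h^{-1}$), and that a non-central $h$ cannot lie in every conjugate of such a stabilizer.
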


In particular, it follows not only that $\mathcal{I}(\Sigma)$ and each term of the Johnson filtration contains pseudo-Anosov elements, but that one can fix any symplectic matrix and find pseudo-Anosov mapping classes which induce that automorphism on the homology of $\Sigma$.

The discussion above shows that the usual homological representation of the mapping class group forgets a lot of data about a mapping class.  The point of this paper is that a large part of the lost data can be recovered if we allow ourselves to consider a virtual homological representation.  Before we state the remaining main results, we say a few more general things.

Let $G$ be a group $\psi\in\Aut(G)$ and let $G'<G$ be $\psi$-invariant.  Then $\psi$ restricts to an automorphism of $G'$.  When $G'$ is characteristic (in which we call the cover given by the quotient $G\to G/G'$ a characteristic cover), we get a restriction map $\Aut(G)\to\Aut(G')$, and it is well-known that this map is in general neither injective nor surjective.  When $G'$ has finite index in $\pi_1(\Sigma)$, the restriction map is injective, at least when restricted to $\Mod^1(\Sigma)$.  Indeed, clearly the map is a homeomorphism, and by \cite{I} it is enough to show that no pseudo-Anosov homeomorphism is contained in the kernel.  A pseudo-Anosov homeomorphism stabilizes a measured foliation which lifts to every finite cover (see \cite{FLP}) and is therefore characterized by its local geometry.  This implies that the lift of $\psi$ is pseudo-Anosov and hence nontrivial.

Theorem \ref{t:nontrivial} and the homological Nielsen-Thurston classification can be conveniently stated in terms of a certain representation.  Let $F$ be a field which we assume to have characteristic zero in order to avoid technical difficulties.  Let $\Sigma'$ be a finite characteristic cover of $\Sigma$.  We suppose that $\Sigma'$ is an element in a class $\mathcal{K}$ of finite characteristic covers of $\Sigma$.  We may define the {\bf pro-$\mathcal{K}$ $F$-homology} of $\Sigma$ by taking
\[
H(\Sigma)=\varprojlim H_1(\Sigma',F),
\]
as $\Sigma'$ ranges over all elements of $\mathcal{K}$.  If $P$ denotes the set of punctures of $\Sigma'$, we can define the relative pro-$\mathcal{K}$ homology of $\Sigma$ by \[H_R(\Sigma)=\varprojlim H_1(\Sigma',P,F).\]  The field of coefficients and $\mathcal{K}$ will generally be apparent from context and we will suppress them from the notation.  We will not exploit any abstract properties of $H(\Sigma)$ other than the fact that it is a vector space of infinite dimension when the characteristic of $F$ is zero.  In this context, Theorem \ref{t:nontrivial} can be restated as saying that if $\mathcal{K}$ exhausts $G$ then $H(\Sigma)$ is a faithful representation of $\Aut(G)$.

Note that $H(\Sigma)$ comes equipped with a natural action of $\whG$, the pro-$\mathcal{K}$ completion of $\pi_1(\Sigma)$.  It is clear from the definitions that this action is compatible with the action of $\Aut(\pi_1(\Sigma))$.  Indeed, if $\Sigma'\to\Sigma$ is a finite regular cover with deck group $\gam$ then $\gam$ acts on itself and on $\pi_1(\Sigma')$ by conjugation.  The action descends to the abelianization of $\pi_1(\Sigma')$.  If \[\Sigma''\to\Sigma'\to\Sigma\] is a characteristic tower of covers with the total deck group given by $\gam$, then the conjugation action of $\gam$ on $\pi_1(\Sigma')$ factors through the quotient map from $\gam$ to the deck group of $\Sigma'/\Sigma$.  Thus, the action of the pro-$\mathcal{K}$ completion of $G$ acts on all $\mathcal{K}$ covers in a way which is compatible with the covering maps.  If $\psi$ is an automorphism of $G$, then $\psi$ acts on each $\mathcal{K}$-cover since these covers are all characteristic.  The action of $\whG$ is given by conjugation within $\pi_1(\Sigma)$, so the action of $\psi$ is compatible with the action of $\whG$.

By reversing the algebraic limit in the definition of $H(\Sigma)$, we may define the {\bf pro-$\mathcal{K}$ $F$-cohomology} of $\Sigma$.  In general it will be clear if we mean pro-$\mathcal{K}$ homology or cohomology, so we will not distinguish between these notationally.

The other main result of this paper is that the representation $H(\Sigma)$ can detect the Nielsen-Thurston classification of mapping classes.  Since the Nielsen-Thurston classification is characterized by actions of homeomorphisms on isotopy classes of curves, one way to proceed is to homologically encode closed curves on $\Sigma$ inside of $H(\Sigma)$.  We will do this by defining certain vectors of {\bf finite type}, whose $\pi_1(\Sigma)$-orbits are in bijective correspondence with free homotopy classes in $\pi_1(\Sigma)$.  With this terminology, we can state the next theorem:

\begin{thm}\label{t:nt}
Let $\mathcal{K}$ be any exhausting class of finite solvable characteristic covers of $\Sigma$ and let $H(\Sigma)$ be the pro-$\mathcal{K}$ rational cohomology of $\Sigma$.  Let $\psi\in\Mod(\Sigma)$, and choose a lift of $\psi$ to $\Mod^1(\Sigma)$.  The action of $\psi$ on $H(\Sigma)$ determines the Nielsen-Thurston classification of $\psi$ as follows:
\begin{enumerate}
\item
$\psi$ has finite order if and only if some power of $\psi$ preserves the $\pi_1(\Sigma)$-orbits of all finite type vectors in $H(\Sigma)$.
\item
$\psi$ is reducible if and only if some power of $\psi$ fixes the $\pi_1(\Sigma)$-orbit of some non-peripheral finite type vector in $H(\Sigma)$.
\item
$\psi$ is pseudo-Anosov if and only if the previous two conditions fail.
\end{enumerate}
The powers in items $(1)$ and $(2)$ are the order of $\psi$ and a power of $\psi$ which fixes the isotopy class of an essential simple closed curve, respectively.
\end{thm}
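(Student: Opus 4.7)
My plan is to translate each item of the theorem into a statement about the action of $\psi$ on conjugacy classes in $\pi_1(\Sigma)$, using the bijection (constructed earlier in the paper) between $\pi_1(\Sigma)$-orbits of finite type vectors in $H(\Sigma)$ and free homotopy classes of closed curves on $\Sigma$. Under this correspondence, the action of $\psi$ on $\pi_1(\Sigma)$-orbits coincides with its action on conjugacy classes, so preserving an orbit means preserving the corresponding conjugacy class. Each of the three items thus becomes a question about the $\psi$-action on conjugacy classes of $\pi_1(\Sigma)$.

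For item (1), if $\psi$ has finite order $k$ in $\Mod(\Sigma)$, then $\psi^k$ is trivial in $\Mod(\Sigma)$ and the chosen lift in $\Mod^1(\Sigma)\subset\Aut(\pi_1(\Sigma))$ is inner, hence fixes every conjugacy class and preserves every orbit. Conversely, if $\psi^n$ preserves every $\pi_1(\Sigma)$-orbit of a finite type vector, then $\psi^n$ sends each conjugacy class of $\pi_1(\Sigma)$ to itself, i.e.\ $\psi^n$ is class-preserving; by the classical rigidity of surface group automorphisms, every class-preserving automorphism is inner, so $\psi^n$ is trivial in $\Mod(\Sigma)$ and $\psi$ has finite order.

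For item (2), the forward direction is immediate: a reducible $\psi$ has some power fixing the isotopy class of an essential simple closed curve $\alpha$, and the finite type vector attached to $\alpha$ has non-peripheral orbit preserved by that power. For the converse I argue the contrapositive: if $\psi$ is pseudo-Anosov then no power of $\psi$ preserves the orbit of any non-peripheral finite type vector, i.e.\ no non-peripheral conjugacy class in $\pi_1(\Sigma)$ is $\psi$-periodic. Represent $\psi$ by a pseudo-Anosov homeomorphism $\phi$ with dilatation $\lambda>1$ and stable measured foliation $\mu^s$. For any non-peripheral closed curve $\gamma$, simple or not, one has $i(\gamma,\mu^s)>0$ because $\mu^s$ is filling, and the standard transformation rule for intersection number gives $i(\phi^n(\gamma),\mu^s)=\lambda^n\, i(\gamma,\mu^s)$. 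A free homotopy $\phi^n(\gamma)\simeq\gamma$ would therefore force $\lambda^n=1$, contradicting $\lambda>1$.

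Item (3) then follows formally from the Nielsen-Thurston trichotomy: every $\psi\in\Mod(\Sigma)$ is finite order, reducible, or pseudo-Anosov, and items (1) and (2) detect the first two classes, so failure of both forces $\psi$ to be pseudo-Anosov; conversely the intersection argument in (2) shows that a pseudo-Anosov satisfies neither. The main obstacle in executing this plan is the intersection-number estimate in the converse of (2) when $\gamma$ is non-simple (in particular, filling): one must carefully define $i(\gamma,\mu^s)$ via the pullback of the transverse measure to a minimal-position immersion of $\gamma$, and verify positivity using the minimality and full support of $\mu^s$. The orbit-versus-conjugacy-class correspondence for finite type vectors is routine, following from the coherent-lift construction across the inverse system of characteristic covers.
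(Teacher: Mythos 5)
Your proposal is correct and follows essentially the same route as the paper: translate orbits of finite type vectors into conjugacy classes, deduce finite order from a class-preserving (hence inner) power, and rule out a pseudo-Anosov fixing a non-peripheral class via exponential growth under iteration. The only cosmetic difference is that you quantify the growth by intersection number with the stable foliation, where the paper uses the equivalent word-length asymptotics $\ell(\psi^n(g))\sim K^n$ from \cite{FLP}.
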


Unfortunately, the finite-type vectors do not give much insight into the action of a mapping class on the homology or cohomology of the cover, since the finite type vectors do not give rise to a subrepresentation of $H(\Sigma)$, but we shall show in a precise sense that they are the only way to encode homotopy classes of loops in $\Sigma$ as vectors in $H(\Sigma)$.  There is another homological version of the Nielsen-Thurston classification.  To state it, we need some setup which applies to general faithful representations of $\Mod(\Sigma)$:

Let $\gam$ be a characteristic quotient of $\pi_1(\Sigma)$.  If $\rho$ is a finite-dimensional representation of $\gam$ in characteristic zero, or more generally a direct limit of finite-dimensional representations in characteristic zero, then at any finite stage we may decompose the representation as a direct sum of irreducible modules.  If $\rho$ is an irreducible representation of $\gam$ and $\al\in\Aut(\gam)$, we get a new representation of $\gam$ via $\rho\circ\al$.  To each representation $\rho$ we can associate its character $\chi$.  $\Aut(\gam)$ acts on the characters of $\gam$ by precomposition.  We say two characters $\chi$ and $\chi'$ are {\bf equivalent} on a cyclic subgroup $\langle g\rangle<\gam$ if $\chi$ and $\chi'$ are equal as functions on $\langle g\rangle$, and inequivalent otherwise.

\begin{thm}\label{t:nt2}
Let $H(\Sigma)$ be the pro-$\mathcal{K}$ complex cohomology, where $\mathcal{K}$ is the class of all characteristic finite covers of $\Sigma$.  The action of $\psi\in\Mod(\Sigma)$ on the finite representations of $\whG$ in $H(\Sigma)$ determines the Nielsen-Thurston classification of $\psi$ as follows:
\begin{enumerate}
\item
$\psi$ is finite order if and only if some power of $\psi$ induces the trivial automorphism of the representations of the deck group for every $\mathcal{K}$--cover of $\Sigma$.
\item
$\psi$ is reducible if and only if there is a power of $\psi$ and a $1\neq g\in \pi_1(\Sigma)$ such that on every finite $\mathcal{K}$--cover of $\Sigma$ and every representation of the deck group, $\chi$ and $\chi\circ\psi$ are equivalent on the image of $\langle g\rangle$.
\item
$\psi$ is pseudo-Anosov if and only if the previous two conditions fail.
\end{enumerate}
\end{thm}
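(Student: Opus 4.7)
The plan is to translate the character-theoretic conditions first into conjugacy conditions in each deck group $\gam$, then into conjugacy conditions in $\pi_1(\Sigma)$, after which the standard Nielsen--Thurston dynamics read off the type of $\psi$. The initial step is basic finite-group character theory: for a finite group $\gam$, two elements are conjugate iff they agree under every irreducible character. Hence the hypothesis of (1) is equivalent to $\psi^m$ restricting to a class-preserving automorphism of every $\mathcal{K}$-quotient, and the hypothesis of (2) is equivalent to $\psi^k$ sending each power $g^j$ to a $\gam$-conjugate of itself in every $\mathcal{K}$-quotient.

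Next, I would upgrade these quotient-level statements to statements in $\pi_1(\Sigma)$ itself, using conjugacy separability of surface groups (classical; see Grossman, Martino--Minasyan) together with the fact that characteristic finite quotients separate conjugacy classes. This produces, for (1), that $\psi^m$ is class-preserving on $\pi_1(\Sigma)$, and for (2), that $\psi^k(g^j)$ is conjugate to $g^j$ in $\pi_1(\Sigma)$ for every integer $j$. For (1) I would then invoke Grossman's theorem that every class-preserving automorphism of a surface group is inner, whence $\psi^m$ is inner in $\Aut(\pi_1(\Sigma))$, trivial in $\Mod(\Sigma)$, and $\psi$ has finite order. The converse direction of (1) is clean: a finite-order mapping class has a power that lifts to an inner automorphism of $\pi_1(\Sigma)$, and inner automorphisms fix every character since characters are class functions. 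For the forward direction of (2), the $j=1$ case gives $\psi^k$ fixing the free homotopy class $[g]$; this rules out the pseudo-Anosov case, since on any nontrivial free homotopy class a pseudo-Anosov map forces the hyperbolic length to grow exponentially, so no power can fix $[g]$. Conversely, if $\psi$ is infinite-order reducible one may take $g$ to represent a preserved essential simple closed curve, and if $\psi$ is finite order one may take any $g\neq 1$, since then some power of $\psi$ is inner and preserves every conjugacy class. Part (3) is then immediate from the Nielsen--Thurston trichotomy.

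The main obstacle is marshaling the two deep group-theoretic inputs: conjugacy separability of $\pi_1(\Sigma)$, and Grossman's theorem that class-preserving automorphisms of a surface group are inner. A secondary subtlety is checking that the notion of ``trivial automorphism of the representations of the deck group'' in (1) coincides with triviality on characters (equivalently, isomorphism $\rho\cong\rho\circ\psi^m$ for every irreducible $\rho$); this is immediate once one unpacks that $\psi^m$ acts on the set of isomorphism classes of representations by precomposition. Once these inputs are in hand, the remainder of the argument reduces to routine finite-group representation theory together with the standard dynamical fact that pseudo-Anosov mapping classes have no periodic nontrivial conjugacy classes in $\pi_1(\Sigma)$.
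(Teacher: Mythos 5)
Your proposal is correct and follows essentially the same route as the paper: reduce the character conditions to conjugacy conditions in the finite deck groups via the fact that irreducible complex characters separate conjugacy classes, lift these to $\pi_1(\Sigma)$ via conjugacy separability, and rule out the pseudo-Anosov case using exponential growth of lengths of conjugacy classes. The only difference is that you make explicit (via Grossman's theorem on class-preserving automorphisms) the step the paper dismisses as ``clear'' in the finite-order case, which is a welcome clarification rather than a departure.
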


Note that even though the conditions use the action of $\Mod^1(\Sigma)$ on certain quotients of $\pi_1(\Sigma)$, the characterizations of the mapping classes does actually exploit the action of automorphisms on the homology of finite covers.  This is because every representation of the deck group of a finite cover of $\Sigma$ occurs as a summand of the homology of the cover.  The homological Nielsen-Thurston classification uses the fact that certain mapping classes permute the representations of the deck groups in a certain way.

We will formulate and prove an analogous result for automorphisms of free groups in Section \ref{s:free}.  

The terminology of the Nielsen-Thurston classification is reminiscent of representation-theoretic terminology, and it is interesting to see what the relationship between topology and representation theory is.  What Theorems \ref{t:nt} and \ref{t:nt2} make precise is that a mapping class is reducible if and only if the associated action of $\psi$ on $H(\Sigma)$ has (more or less) a trivial finite-type subrepresentation or if there exists fixed subrepresentation of every finite characteristic quotient of $\pi_1(\Sigma)$.  It is natural to ask whether the following strengthened version of Theorem \ref{t:nt} is true: $\psi$ is pseudo-Anosov if and only if there is a finite cover of $\Sigma$ for which $\psi$ together with the deck transformation group act irreducibly on the rational homology.  This is not true in a very strong sense:

\begin{thm}\label{t:reducible}
Let $\psi\in\Mod^1(\Sigma)$ and $\Sigma'\to\Sigma$ a finite regular cover with deck group $\gam$ which is $\psi$-invariant.  Suppose that $\psi$ acts irreducibly on $H_1(\Sigma',F)$, where $F$ is a field of characteristic zero.  Then $\gam$ is trivial.  Furthermore, there exists a power $n>0$ such that if we replace $\psi$ by $\psi^n$, then there is at least one irreducible $(\psi^n,\gam)$-module for each irreducible $\gam$-module over $F$.
\end{thm}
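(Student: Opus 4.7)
The plan is to prove the two claims in turn. For the first, I would invoke transfer in homology: for the finite regular cover $\Sigma' \to \Sigma$ with deck group $\gam$, there is a natural identification
\[
H_1(\Sigma, F) \;\cong\; H_1(\Sigma', F)^{\gam},
\]
and the right hand side is nonzero since $\Sigma$ is of hyperbolic type and $F$ has characteristic zero. Because $\gam$ is $\psi$-invariant, the image of $\psi$ inside $\Aut(H_1(\Sigma', F))$ normalizes the image of $\gam$, so $\psi$ preserves this $\gam$-invariant subspace. The irreducibility hypothesis then forces the $\gam$-invariants to coincide with all of $H_1(\Sigma', F)$; that is, $\gam$ acts trivially on $H_1(\Sigma', F)$. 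Comparing dimensions via the Riemann--Hurwitz formula --- which gives $\dim H_1(\Sigma', F) - \dim H_1(\Sigma, F) = (|\gam| - 1) \cdot (-\chi(\Sigma)) > 0$ whenever $|\gam| > 1$, since $\chi(\Sigma) < 0$ --- then forces $|\gam| = 1$.

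For the furthermore statement I would first pin down the $F[\gam]$-module structure of $H_1(\Sigma', F)$ directly from the cellular chain complex. Lifting a minimal CW structure on $\Sigma$ to $\Sigma'$ and computing the Euler characteristic in the Grothendieck group of $F[\gam]$-modules yields a decomposition of the shape
\[
H_1(\Sigma', F) \;\cong\; F[\gam]^{-\chi(\Sigma)} \oplus F^{\epsilon},
\]
with $\epsilon = 2$ in the closed case and $\epsilon = 1$ in the punctured case. Since $-\chi(\Sigma) \geq 1$ for every hyperbolic $\Sigma$, the regular representation $F[\gam]$ sits as a $\gam$-summand of $H_1(\Sigma', F)$, and in particular every irreducible $F[\gam]$-module appears. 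Now $\psi_{\ast} : \gam \to \gam$ induces a permutation of the finite set of isomorphism classes of irreducible $F[\gam]$-modules via precomposition; letting $n$ be the order of this permutation, $\psi^n$ fixes every such class and therefore preserves every $\gam$-isotypic component of $H_1(\Sigma', F)$. Decomposing each nonzero isotypic component as a $(\psi^n, \gam)$-module into irreducibles produces, for every irreducible $\gam$-module $V$, an irreducible $(\psi^n, \gam)$-submodule whose underlying $\gam$-module is a direct sum of copies of $V$ and thus contains $V$.

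The main obstacle is the explicit $F[\gam]$-module description of $H_1(\Sigma', F)$, and in particular the appearance of the regular representation as a summand, which is what feeds the second conclusion. A Chevalley--Weil type theorem gives the same information as a black box, though extracting it over a general characteristic zero field $F$ requires a brief scalar extension step. Once either input is in hand, the remaining arguments are formal manipulations with invariants, isotypic components, and a finite permutation of irreducibles.
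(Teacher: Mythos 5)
Your proof is correct and follows essentially the same route as the paper's: both arguments exhibit the trivial $\Gamma$-isotypic component of $H_1(\Sigma',F)$ (the pullback of the homology of the base) as a nonzero, proper subspace invariant under both $\psi$ and $\Gamma$, and both deduce the ``furthermore'' claim from the Chevalley--Weil decomposition together with a power of $\psi$ that stabilizes every isotypic component. The only cosmetic differences are that you certify properness via the Riemann--Hurwitz count and take $n$ to be the order of the induced permutation of irreducible classes, whereas the paper invokes Poincar\'e duality and takes $n$ to be the order of $\psi$ as an automorphism of the finite group $\Gamma$.
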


In particular, every pseudo-Anosov homeomorphism acts reducibly on every nontrivial finite cover of $\Sigma$, and any particular infinite order mapping class can be replaced by a power which acts with a prescribed number of invariant subspaces on some finite cover.  We remark that Theorem \ref{t:reducible} could easily be deduced from the material in \cite{KS}.  Theorem \ref{t:reducible} should be thought of as a dramatic failure of a straightforward generalization of the Casson-Bleiler criterion.

\section{Acknowledgements}
The idea for this paper came out of numerous discussions with Curt McMullen, and the author thanks him for his patience and ceaseless help.  The author thanks Joan Birman for her patience and help with various incarnations of this paper.  The author also thanks Nir Avni, Benson Farb, Stefan Friedl, Eriko Hironaka, Dan Margalit, and Andrew Putman for various helpful conversations.  The author especially thanks Aaron Silberstein for extremely inspiring  and helpful conversations on this topic.  The author thanks Alexandra Pettet and Juan Souto for inspiring the material contained in the Section \ref{s:free}.  Finally, the author thanks the anonymous referees for the time they spent thoroughly understanding this work, and for their numerous helpful, insightful and thorough comments and suggestions.  The author is partially supported by a NSF Graduate Research Fellowship.

\section{Generalities on residual finiteness and automorphisms}
In this section, let $G$ be a finitely generated residually finite group.  The following is well-known and originally due to Baumslag (cf. \cite{LySch}):
\begin{lemma}
$\Aut(G)$ is residually finite.
\end{lemma}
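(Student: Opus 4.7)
The plan is to build a descending sequence of characteristic finite index subgroups of $G$ whose intersection is trivial, and then use the induced homomorphisms from $\Aut(G)$ to the finite groups $\Aut(G/K_n)$ to separate points.

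First I would exploit the fact that $G$ is finitely generated: for each $n$, there are only finitely many homomorphisms from $G$ to the symmetric group $S_n$, and hence only finitely many subgroups of index at most $n$ in $G$. Let
\[
K_n=\bigcap_{[G:H]\leq n} H.
\]
Since the collection of subgroups of index $\leq n$ is finite and is permuted by $\Aut(G)$, the subgroup $K_n$ is characteristic and of finite index in $G$. Because $G$ is residually finite, $\bigcap_n K_n=\{1\}$.

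Next, since $K_n$ is characteristic, every $\psi\in\Aut(G)$ descends to an automorphism of the finite group $G/K_n$, so we get a natural homomorphism
\[
r_n\colon \Aut(G)\longrightarrow \Aut(G/K_n),
\]
and the target is a finite group. It remains to check that $\bigcap_n \ker r_n=\{1\}$.

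Given $1\neq\psi\in\Aut(G)$, pick $g\in G$ with $\psi(g)g^{-1}\neq 1$. By residual finiteness of $G$, choose a finite index subgroup $H\leq G$ not containing $\psi(g)g^{-1}$, and set $n=[G:H]$. Then $K_n\subseteq H$, so $\psi(g)g^{-1}\notin K_n$, which means that the image of $\psi$ in $\Aut(G/K_n)$ does not fix the coset $gK_n$. Hence $r_n(\psi)\neq 1$, completing the proof. The only mild subtlety is confirming that finitely generated groups really do have only finitely many subgroups of each index, but this is a standard consequence of the fact that such a subgroup is the stabilizer of a point in a transitive action of $G$ on a set of size $n$, and such actions correspond to homomorphisms $G\to S_n$, of which there are at most $|S_n|^{|S|}$ when $S$ is a finite generating set.
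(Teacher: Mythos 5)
Your proof is correct and follows essentially the same route as the paper's (Baumslag's argument): both reduce to finding a finite-index characteristic subgroup avoiding $\psi(g)g^{-1}$ and observing that $\psi$ then acts nontrivially on the corresponding finite quotient. The only difference is that you make explicit the construction of the characteristic subgroup (as the intersection $K_n$ of all subgroups of index at most $n$), a step the paper merely asserts as a consequence of finite generation.
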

\begin{proof}
If $H<G$ is a finite index subgroup, then $G$ admits a finite index characteristic subgroup contained in $H$.  This claim uses the fact that $G$ is finitely generated.  If $\al\in\Aut(G)$ is nontrivial, then there is a $g\in G$ such that $g\neq\al(g)$.  Since $G$ is residually finite, there is a finite index subgroup $H$ of $G$ which does not contain $g$ and $\al(g)g^{-1}$.  There is a finite index characteristic subgroup $H'<H<G$, and $\al$ descends to an automorphism of $G/H'$.  Since $g$ and $\al(g)g^{-1}$ are both nonidentity elements of $G/H'$, we have that $\al(g)\neq g$ in $G/H'$.  It follows that $\al$ is nontrivial in a finite quotient of $\Aut(G)$.
\end{proof}

It is not true that if $G$ is residually finite then $\Out(G)$ is necessarily residually finite.  By the work of Wise in \cite{W}, any finitely generated group is a subgroup of $\Out(G)$ for some group $G$.

We will need the following observation to prove the homological characterizations of the Nielsen-Thurston classification (see \cite{G} for a similar argument):
\begin{lemma}
Let $G$ be a finitely generated residually finite group.  The following three statements are equivalent:
\begin{enumerate}
\item
$\Out(G)$ is residually finite.
\item
The closure of the image of $G$ in $\widehat{\Aut(G)}$ is separated from all non-inner automorphisms.
\item
Any non-inner automorphism of $G$ descends to a non-inner automorphism of some finite quotient of $G$.
\end{enumerate}
\end{lemma}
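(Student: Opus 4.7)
The plan is to unfold each of (1), (2), (3) into a statement about finite-index subgroups of $\Aut(G)$ and then identify the resulting conditions. Let $I=\operatorname{Inn}(G)$, which is a normal subgroup of $\Aut(G)$, so that $\Out(G)=\Aut(G)/I$. Then (1) is equivalent to asserting that for every $\al\in\Aut(G)\setminus I$ there is a finite-index normal $M\triangleleft\Aut(G)$ with $I\subseteq M$ and $\al\notin M$.

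The equivalence (1) $\iff$ (2) is then essentially tautological. Statement (2) unfolds to: for every non-inner $\al$ there is a finite-index normal $N\triangleleft\Aut(G)$ with $\al N\cap I=\emptyset$, i.e.\ $\al\notin I\cdot N$. Because $I$ is normal in $\Aut(G)$, the product $I\cdot N$ is itself a finite-index normal subgroup of $\Aut(G)$ containing $I$, while conversely any finite-index normal $M\supseteq I$ equals $I\cdot M$. The two collections of subgroups singled out in (1) and (2) therefore coincide.

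For (3) $\Rightarrow$ (1) I would first pass from a general $\al$-invariant finite-index $K$ to a characteristic finite-index refinement $K'$: since $G$ is finitely generated it has only finitely many subgroups of each given finite index, so the intersection of the $\Aut(G)$-orbit of $K$ is a characteristic finite-index subgroup contained in $K$, and non-innerness persists under this refinement because any inner witness on $G/K'$ would descend to $G/K$. With $K$ characteristic, the functorial map $\Aut(G)\to\Aut(G/K)$ carries $I$ onto $\operatorname{Inn}(G/K)$, hence descends to $\Out(G)\to\Out(G/K)$, a homomorphism to a finite group in which the image of $[\al]$ is nontrivial. This produces the finite quotient of $\Out(G)$ required by (1).

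The converse (1) $\Rightarrow$ (3) is the crux. The plan is to establish cofinality of the kernels $N_K:=\ker(\Aut(G)\to\Aut(G/K))$, as $K$ ranges over characteristic finite-index subgroups of $G$, among all finite-index normal subgroups of $\Aut(G)$. Granted this, given a non-inner $\al$ one uses (1) to obtain a finite-index normal $M\supseteq I$ with $\al\notin M$, picks characteristic $K$ with $N_K\subseteq M$, so that $I\cdot N_K\subseteq M$ and hence $\al\notin I\cdot N_K$; this translates precisely to $\bar\al\in\Aut(G/K)\setminus\operatorname{Inn}(G/K)$, giving (3). The cofinality itself I would attack by running the residual-finiteness argument of the preceding lemma on each of the finitely many non-identity coset representatives of $\Aut(G)/M$ to produce characteristic quotients detecting each representative, and then taking a common characteristic refinement. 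The main obstacle is upgrading this pointwise separation — each coset representative falling outside the corresponding $N_K$ — to the subgroup inclusion $N_K\subseteq M$, equivalently the statement that $M\cdot N_K=M$ for $K$ small enough; this uniformity is where the delicate work lies.
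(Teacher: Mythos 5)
Your handling of (1) $\iff$ (2) and of (3) $\Rightarrow$ (1) is correct: identifying the subgroups $I\cdot N$ with the finite-index normal subgroups of $\Aut(G)$ containing $I$, and refining an $\al$-invariant finite-index subgroup to a characteristic one (using finite generation) while noting that an inner witness on $G/K'$ descends to $G/K$, are exactly the right steps. One point to make explicit: embedding $\Aut(G)$ into $\widehat{\Aut(G)}$ already invokes the preceding lemma that $\Aut(G)$ is residually finite, and, as the paper remarks, one must quotient $G$ by its center for its image to coincide with $I$.

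The genuine gap is in (1) $\Rightarrow$ (3), and it sits exactly where you put it, but it is worse than ``delicate.'' Cofinality of the congruence kernels $N_K=\ker(\Aut(G)\to\Aut(G/K))$ among the finite-index subgroups of $\Aut(G)$ is the congruence subgroup property for $\Aut(G)$; for $G=F_n$ with $n\geq 3$ this is a famous open problem, and it is likewise open for surface groups of higher genus. Your proposed attack cannot close it: separating each of the finitely many chosen coset representatives $\beta_i$ of $M$ in some $\Aut(G/K_i)$ and passing to a common characteristic refinement $K$ only ensures that $N_K$ misses those representatives, whereas $N_K\subseteq M$ requires $N_K$ to miss every element of every nontrivial coset $\beta_i M$ --- infinitely many conditions that no finite refinement controls. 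For comparison, the paper's proof runs entirely inside the profinite topology on $\Aut(G)$ and identifies ``$\al$ is inner on every finite characteristic quotient'' with ``$\al$ lies in the closure of the image of $G$ in $\widehat{\Aut(G)}$''; that identification is the very same cofinality assertion, made tacitly, so your write-up at least has the virtue of isolating the real content. If what you ultimately need is the corollary the paper uses later (every non-inner automorphism of a free or surface group is non-inner on some finite quotient), the robust route bypasses (1) entirely: Grossman's argument combines conjugacy separability with the fact that class-preserving automorphisms of these groups are inner, and yields statement (3) directly.
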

\begin{proof}
Since $G$ is residually finite, we have that $\Aut(G)$ injects into its profinite completion $\widehat{\Aut(G)}$.  Strictly speaking, this is true if $G$ has no center, which is certainly the case when $G$ is a free or surface group.  Otherwise, we need to quotient out by the center of $G$ (since the center of $G$ is viewed as a group of trivial inner automorphisms).  To say that $\al$ coincides with an inner automorphism on every characteristic quotient of $G$ is simply to say that $\al$ is an accumulation point of the image of $G$ under the composition \[G\to\Aut(G)\to\widehat{\Aut(G)}.\]  Thus, $\al$ cannot be separated from all inner automorphisms on any finite quotient of $G$ if and only if the closure of image of $G$ in $\widehat{\Aut(G)}$ contains $\al$.  Taking the topological quotient of $\widehat{\Aut(G)}$ by the closure of the image of $G$ gives a Hausdorff space $X$.  It is evident that $X$ is the profinite completion of $\Out(G)$ with respect to a cofinal sequence of finite index subgroups.  $\Out(G)$ is residually finite if and only if $\Out(G)$ injects into its profinite completion, and this happens only if the profinite topology on $\Out(G)$ is Hausdorff.  Note that cosets of finite index subgroups of $\Aut(G)$ form a basis for the profinite topology on $\Aut(G)$.  Therefore $\al$ will be separated from the identity in the profinite completion of $\Out(G)$ if and only if some coset of a finite index subgroup of $\Aut(G)$ separates $\al$ from the image of $G$.  The claim follows.
\end{proof}

It is well-known that both $\Out(F_n)$ and $\Mod(\Sigma)$ are residually finite (see \cite{G}, also \cite{FM}).  The fact that surface and free groups are residually finite can be found in \cite{LySch}, for instance.

\begin{cor}
Let $\psi$ be an automorphism of a surface group or a free group $G$.  Then there is a finite characteristic quotient $\gam$ of $G$ such that $\psi\in\Aut(\gam)$ does not coincide with any inner automorphism of $\gam$.
\end{cor}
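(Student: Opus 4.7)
The plan is to deduce this directly from the preceding lemma, using that $\Out(G)$ is residually finite both when $G=F_n$ and when $G=\pi_1(\Sigma)$. For $F_n$ this is cited in the sentence immediately before the corollary, and for $\pi_1(\Sigma)$ it follows because $\Mod(\Sigma)$ is residually finite and sits inside $\Out(\pi_1(\Sigma))$ with finite index (Dehn--Nielsen--Baer). The case in which $\psi$ is already inner is vacuous, so I would assume $\psi$ represents a nontrivial class in $\Out(G)$ and invoke the implication $(1)\Rightarrow(3)$ of the lemma to produce a finite quotient $G/N$ on which $\psi$ descends to a non-inner automorphism.

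The remaining task is to upgrade the finite quotient $G/N$ to a \emph{characteristic} one. The standard move is to pass to the characteristic core
\[
N^* \;=\; \bigcap \{\, H \leq G \,:\, [G:H] \leq [G:N]\,\}.
\]
Because $G$ is finitely generated, only finitely many subgroups of any fixed finite index exist, so the intersection is finite and $[G:N^*]<\infty$. By construction $\Aut(G)$ permutes the subgroups being intersected, so $N^*$ is characteristic, and obviously $N^*\subseteq N$.

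Finally, I would verify that non-innerness persists along the further quotient $G/N^*\twoheadrightarrow G/N$. Suppose, for contradiction, that $\psi$ acted on $\gam:=G/N^*$ as conjugation by some $xN^*$. Pushing forward along the canonical surjection $G/N^*\twoheadrightarrow G/N$ would then express $\psi|_{G/N}$ as conjugation by $xN$, contradicting what the lemma already gave us. Hence $\gam=G/N^*$ is the required characteristic quotient.

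The main obstacle is genuinely minor: essentially all the substance has been absorbed into the previous lemma and into Grossman's theorem on residual finiteness of $\Out$ in these cases. The only thing to be careful about is the promotion step, where one must invoke finite generation to control the core's index and then perform the one-line diagram chase to transport non-innerness down.
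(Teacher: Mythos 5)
Your proof is correct and follows the paper's intended route: the corollary is deduced directly from Grossman's residual finiteness of $\Out(F_n)$ and $\Out(\pi_1(\Sigma))$ via the implication $(1)\Rightarrow(3)$ of the preceding lemma, whose own proof already works with characteristic quotients, so your characteristic-core promotion (the same device the paper uses earlier to show $\Aut(G)$ is residually finite) is a harmless extra safeguard. One pedantic note: for inner $\psi$ the statement is false rather than vacuous, so the corollary must be read as applying to non-inner automorphisms, exactly as you assume.
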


Let $G$ be a group which is residually $\mathcal{K}$.  We say that $G$ is {\bf conjugacy separable} with respect to the class of $\mathcal{K}$--groups if for every pair $g,h\in G$ of representatives from different conjugacy classes, there is a $\mathcal{K}$--quotient $K_{g,h}$ of $G$ such that the images of $g$ and $h$ are not conjugate.  We will need the following well-known result:

\begin{lemma}[cf. \cite{LySch}]
Let $G$ be a free or a surface group.  Then $G$ is conjugacy separable with respect to the class of finite $p$-groups.
\end{lemma}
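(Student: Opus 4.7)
The plan is to reduce conjugacy $p$-separability of $G$ to the assertion that two elements of $G$ are conjugate in the pro-$p$ completion $\widehat{G}_p$ if and only if they are already conjugate in $G$. The reduction is a standard inverse-limit compactness argument: the set of conjugators $\{x\in G:xgx^{-1}=h\}$, pushed forward into each finite $p$-quotient $G/\gamma_k^{(p)}(G)$ (where $\gamma_k^{(p)}$ denotes the $p$-lower central series), forms an inverse system of finite sets. This system is eventually empty exactly when some finite $p$-quotient separates the conjugacy classes of $g$ and $h$; otherwise its inverse limit produces a conjugator in $\widehat{G}_p$.

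First I would confirm that $G$ is residually $p$, so that $G$ embeds in $\widehat{G}_p$. For the free group $F_n$ this is the classical Magnus embedding into the units $1+I$ of the power series ring $\bZ_p\langle\langle X_1,\ldots,X_n\rangle\rangle$, with $I$ the augmentation ideal; successive quotients by $p^N$ and by the degree-$N$ part of $I$ form a cofinal system of finite $p$-group quotients of $F_n$ with trivial intersection. For a surface group $\pi_1(\Sigma)$ I would iterate the construction $H\mapsto H^p[H,H]$, starting with $H=\pi_1(\Sigma)$ and then passing to the fundamental group of each successive characteristic $p$-cover of $\Sigma$; the resulting filtration by finite-index characteristic subgroups has trivial intersection, using that the abelianizations mod $p$ of the covers grow without bound.

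For the free group, conjugacy separation then reduces to a well-known fact about the free pro-$p$ group $\widehat{F_n}_p$: any two elements of the dense subgroup $F_n$ that are conjugate in $\widehat{F_n}_p$ have cyclically reduced representatives that are cyclic rotations of one another, hence are already conjugate in $F_n$. This can be proved by tracking cyclic words via the Magnus expansion in the associated graded algebra $\bigoplus_k I^k/I^{k+1}$, in which cyclic rotation corresponds precisely to conjugation modulo higher order terms. For the surface group, I would leverage the one-relator structure: express $\pi_1(\Sigma)$ as an amalgamated product or HNN extension of free subgroups along the cyclic subgroup generated by the defining relator, and appeal to the pro-$p$ Bass--Serre theory of Ribes--Zalesskii to descend conjugacy questions in $\widehat{\pi_1(\Sigma)}_p$ to conjugacy in the free vertex groups (handled by the previous step) and in the cyclic edge subgroup (trivially $p$-separated).

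The main obstacle is the surface group case: one must verify that the cyclic edge subgroup is $p$-closed in each free vertex group of the splitting, so that the pro-$p$ amalgam decomposition is well-behaved and conjugacy in the amalgam is controlled by conjugacy in the factors. This $p$-closedness is precisely where the single-relator structure of surface groups is essential, and it is the technical point for which one appeals to the classical combinatorial group theory treated in \cite{LySch}.
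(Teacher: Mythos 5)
The paper itself offers no proof of this lemma: it is stated as a known result with a pointer to \cite{LySch} and then used as a black box in the proofs of Theorems \ref{t:nt}, \ref{t:nt2} and \ref{t:main}. So what follows is an assessment of your argument on its own terms. Your reduction of conjugacy $p$-separability to the statement that elements of $G$ conjugate in the pro-$p$ completion are conjugate in $G$ (via the inverse system of finite, nonempty conjugator sets) is correct and standard, and your verifications that free and surface groups are residually $p$ are fine. The difficulty is that the entire content of the lemma lives in the two steps you treat most briefly.

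For the free group, the assertion that elements of $F_n$ conjugate in the pro-$p$ completion are already conjugate in $F_n$ \emph{is} the lemma, and your justification --- ``cyclic rotation corresponds precisely to conjugation modulo higher order terms'' in the Magnus expansion --- restates the goal rather than proving it. Conjugacy of $\mu(g)$ and $\mu(h)$ by a unit of the completed algebra gives you agreement of all conjugation-invariant (necklace) coefficients; deducing from this that the cyclically reduced forms of $g$ and $h$ are literal rotations of one another is the hard step, and it is genuinely harder than ordinary conjugacy separability of free groups, whose easy proof via permutation representations into symmetric groups does not adapt to $p$-groups. For the surface group, first a correction: the edge group of the splitting is generated by (the class of) a separating simple closed curve, i.e.\ by a proper subword of the relator --- the relator itself is trivial in the group. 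More seriously, $p$-closedness of the edge group in the vertex groups is not the only obstacle you face: conjugacy separability does not pass to amalgamated products over closed cyclic subgroups without the additional double-coset separability and conjugacy-distinguishedness hypotheses of the Ribes--Segal--Zalesskii machinery, and you would also need the pro-$p$ topology of the amalgam to induce the full pro-$p$ topology on each vertex group. None of this is addressed (and none of it is in \cite{LySch}, which predates pro-$p$ Bass--Serre theory). As written, the proposal is a plausible program, not a proof; both halves require substantial further argument.
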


\section{A characteristic zero asymptotically faithful representation of $\Mod^1(\Sigma)$ and highly reducible actions on virtual homology}
In this section, we shall prove Theorem \ref{t:nontrivial}.  There are several proofs, and we will restrict ourselves to the simplest.

\begin{lemma}[cf. \cite{CW}, \cite{KS}]\label{l:cw}
Let $G$ be a free group of finite rank or a finitely generated surface group, and let $\gam$ be a finite quotient of $G$ with kernel $K$.  Then $\gam$ acts faithfully on $H_1(K,\bZ)$.
\end{lemma}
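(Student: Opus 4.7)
The plan is to argue topologically, using the transfer map for free finite-group actions together with an Euler characteristic count. Realize $G$ as $\pi_1(Z)$ where $Z$ is either a finite wedge of $n\geq 2$ circles (if $G=F_n$) or a hyperbolic-type surface (if $G$ is a surface group); in both cases $\chi(Z)<0$. The finite-index normal subgroup $K$ corresponds to a finite regular cover $p:Y\to Z$ with deck group $\gam=G/K$, and under the standard identification $H_1(Y,\bZ)=K^{ab}$ the conjugation action of $\gam$ on $K^{ab}$ agrees with the deck transformation action on $H_1(Y,\bZ)$. So the lemma becomes: every nontrivial element of $\gam$ acts nontrivially on $H_1(Y,\bZ)$.

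Suppose for contradiction that some nontrivial $g\in\gam$ acts trivially on $H_1(Y,\bZ)$. The kernel of $\gam\to\Aut(H_1(Y,\bZ))$ is a nontrivial finite subgroup of $\gam$, so by Cauchy's theorem it contains an element of some prime order $p$; replace $g$ by such an element and set $H=\langle g\rangle\cong\bZ/p$. Form the intermediate cover $Y'':=Y/H$, so that $Y\to Y''$ is a free regular cover of degree $p$. Because $p$ is invertible in $\bQ$, the transfer isomorphism for a free finite-group action gives
\[
H_1(Y'',\bQ)\ \cong\ H_1(Y,\bQ)^{H}.
\]
By hypothesis $H$ acts trivially on $H_1(Y,\bQ)$, so the right-hand side is all of $H_1(Y,\bQ)$, and we deduce $b_1(Y)=b_1(Y'')$.

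On the other hand, multiplicativity of Euler characteristic on free covers gives $\chi(Y)=p\cdot\chi(Y'')$. For each of the topological types at hand---a connected finite graph, a closed surface, or a surface with punctures---the Euler characteristic is an affine function of $b_1$ alone (since $b_0=1$ and $b_2$ is determined by whether one is in the closed-surface case), so equal first Betti numbers force $\chi(Y)=\chi(Y'')$. Together with $\chi(Y)=p\,\chi(Y'')$ this yields $\chi(Y'')=0$, contradicting $\chi(Y'')=([G:K]/p)\,\chi(Z)<0$.

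The argument is short and I do not foresee a genuine obstacle. What needs care is (i) pinning down the identification of the conjugation action of $\gam$ on $K^{ab}$ with the deck action on $H_1(Y,\bZ)$ and (ii) invoking the correct form of the transfer isomorphism over $\bQ$; both are standard. The hypothesis that $G$ is non-abelian---rank at least $2$ in the free case, hyperbolic type in the surface case---is used exactly to guarantee $\chi(Z)<0$, which is what powers the final contradiction; without it (e.g.\ for $G=\bZ$) the conclusion genuinely fails, so this assumption cannot be dropped.
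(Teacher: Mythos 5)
Your proof is correct, and it takes a genuinely different route from the paper's. The paper argues via the Lefschetz fixed point theorem: a nontrivial deck transformation $\gamma$ acts freely on the cover $Y$, so $L(\gamma)=0$; since $\gamma$ acts trivially in degrees $0$ and $2$, this determines the character of the $\gam$--representation on $H_1(Y,\bQ)$ at every group element, and character theory then identifies the representation as $(2g-2)$ copies of the regular representation plus two trivial summands (resp.\ $(d-1)$ copies plus one trivial in the free case) --- the Chevalley--Weil decomposition. Faithfulness follows because the regular representation occurs with positive multiplicity, which is exactly where $\chi<0$ enters for the paper, just as it powers your final contradiction. Your transfer-plus-Euler-characteristic argument is an averaged cousin of this (the identity $\chi(Y/H)=\tfrac{1}{|H|}\sum_{h\in H}L(h)$ ties the two together), and it is leaner: you need only the transfer isomorphism over $\bQ$, multiplicativity of $\chi$ on finite covers, and the observation that $\chi$ is an affine function of $b_1$ on each of the relevant topological types --- and you cleanly isolate the role of the non-abelianness hypothesis, which the paper leaves implicit. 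What you give up is the extra information: the paper states immediately after the lemma that the full decomposition, not mere faithfulness, is used later (the proof of Theorem \ref{t:nontrivial} picks a homology class generating a regular representation, and Theorem \ref{t:reducible} counts isotypic components), so your argument proves the lemma as stated but would not by itself support those downstream uses. The two points you flag as needing care --- the identification of the conjugation action on $K^{ab}$ with the deck action on $H_1(Y,\bZ)$, and the form of the transfer over $\bQ$ --- are indeed standard, and the reduction to a prime-order element via Cauchy is sound since $Y$ and $Y''=Y/H$ cover the same base and hence have the same topological type.
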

\begin{proof}
Let $X,Y$ be a $K(G,1)$ and a $K(K,1)$ respectively.  If $\gamma\in\gam$ is nontrivial, then $\gamma$ acts fixed-point freely on $Y$, so the Lefschetz number of $\gamma$ must be zero.  Let $n=|\gam|$.  Homology in degrees zero and two is easy to describe, and $\gamma$ acts trivially on them.  In this way, we obtain the character $\chi$ of the representation of $\gam$ on $H_1(Y,\bZ)$: $\chi(1)=2n(g-1)+2$ or $n(d-1)+1$, depending on whether $X$ is homotopy equivalent to a surface of genus $g$ or a wedge of $d$ circles, and $\chi(\gam)=2$ or $1$ respectively when $\gam\neq 1$.  Since characters uniquely determine the isomorphism class of a representation, it follows that the representation of $\gam$ consists of $(2g-2)$ copies of the regular representation and two copies of the trivial representation (respectively $(d-1)$ and one).
\end{proof}

The proof of Lemma \ref{l:cw} is stronger than what we need for Theorem \ref{t:nontrivial}, but we will use this extra information later.  Lemma \ref{l:cw} also follows from the fact that $\gam$ is a finite group of automorphisms and hence $\gam$ acts nontrivially on the homology of the corresponding cover (cf. \cite{FM}, for instance).

The previous lemma can be seen as a generalization of the following result which was the original motivation for the proof of Theorem \ref{t:nontrivial}:
\begin{lemma}\label{l:nontrivial}
Let $\gamma\in G$.  Then there is a finite $p$-group quotient of $G$ with kernel $K$ such that $0\neq [\gamma]\in H_1(K,\bZ)$.
\end{lemma}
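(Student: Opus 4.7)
The plan is to exhibit $K$ as an appropriate term of the lower central $p$-series of $G$. Define inductively $G_1 := G$ and $G_{n+1} := [G, G_n] G_n^p$. Because $G$ is finitely generated, each factor $G_n/G_{n+1}$ is a finitely generated $\bF_p$-vector space, so every quotient $G/G_n$ is a finite $p$-group. Moreover, free groups are residually nilpotent $p$-groups by Magnus, and the analogous fact holds for surface groups (via the Magnus embedding of a surface group into a free pro-$p$ group, or by direct citation), giving $\bigcap_n G_n = \{1\}$.

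Given a nontrivial $\gamma \in G$, I would let $i$ be the largest integer with $\gamma \in G_i$; this is well-defined since the intersection above is trivial. Set $K := G_i$. Then $G/K$ is a finite $p$-group quotient with kernel $K$, and $\gamma \in K$ by the choice of $i$, so $[\gamma]$ makes sense as a class in $H_1(K, \bZ) = K/[K,K]$.

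The final step is to see that this class is nonzero. By definition of the series one has
\[
[K, K] = [G_i, G_i] \subseteq [G, G_i] \subseteq G_{i+1}.
\]
By maximality of $i$ we have $\gamma \notin G_{i+1}$, so $\gamma \notin [K,K]$, which is exactly the desired nonvanishing $0 \neq [\gamma] \in H_1(K, \bZ)$.

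The only nontrivial input is residual $p$-nilpotence of $G$. For free groups it is classical via the Magnus expansion; the surface-group case is the real obstacle to a self-contained proof and would have to be invoked from the literature. Once that is in hand, the argument is entirely formal and the choice of $K = G_i$ with $i$ maximal is forced upon us by the inclusion $[K,K] \subseteq G_{i+1}$.
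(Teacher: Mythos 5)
Your proof is correct and is essentially the paper's argument in unfolded form: the paper picks a finite $p$-group quotient $P$ in which $\gamma$ is nontrivial and central and passes to the cover corresponding to $P/Z(P)$, which is exactly what your choice $K=G_i$ accomplishes, with $G_i/G_{i+1}$ playing the role of (a subgroup of) $Z(P)$ and the containment $[K,K]\subseteq [G,G_i]\subseteq G_{i+1}$ playing the role of the observation that the kernel surjects onto the abelian group $Z(P)$ with $\gamma$ surviving. Both arguments rest on the same external input, namely residual $p$-nilpotence of free and surface groups.
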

\begin{proof}
Suppose $\gamma$ is homologically trivial.  Then there is a finite $p$-group quotient $P$ of $G$ such that $1\neq \gamma\in Z(P)$.  Consider $\Sigma_{P/Z(P)}$, the cover of the base surface corresponding to $P/Z(P)$.  Since $\pi_1(\Sigma_{P/Z(P)})$ admits $Z(P)$ as a quotient, it follows that $\gamma$ is nontrivial in $\pi_1(\Sigma_{P/Z(P)})^{ab}$, whence the claim.
\end{proof}

If $c\subset\Sigma$ is a simple closed curve, one can explicitly produce covers where the homology class of $c$ is nontrivial.  If $c$ is nonseparating, then its homology class is already nontrivial.  Therefore we may assume that $c$ separates $\Sigma$, and $c$ is determined up to a homeomorphism of $\Sigma$ by the splitting on $H_1(\Sigma,\bZ)$ it determines.  Suppose that $\Sigma$ is closed.  Clearly we may arrange $\Sigma$ so that its ``holes" are linearly ordered, and $c$ lies somewhere between the first and the last hole.  Take two simple closed curves which travel ``through the hole", one for each of the fist and last hole.  There is a finite cover $\Sigma'\to\Sigma$ given by counting the sum of the algebraic intersection numbers with each of these curves modulo $2$.  It is easy to verify that $c$ has two distinct lifts to $\Sigma'$, and that each of them is nonseparating.  It is trivial to generalize this construction so that the covering has degree $n$ for any positive integer $n$.

A similar construction holds when $\Sigma$ is not closed and has a finite set $P$ of punctures.  The modification we need to perform is to do intersection theory in $H_1(\Sigma,P,\bZ)$ and use modular algebraic intersection numbers with cycles in $H_1(\Sigma,P,\bZ)$ to find the desired covers.

Lemma \ref{l:cw} shows that if $K<\pi_1(\Sigma)$ is a finite index normal subgroup and $\rho$ is an irreducible representation of $\gam=\pi_1(\Sigma)/K$ over a field $F$ of characteristic zero of dimension $n_{\rho}$, then $\rho$ occurs in the representation of $\gam$ on $H_1(K,F)$ with multiplicity $(2g-2)n_{\rho}$ when $G$ is not free and $\rho$ is nontrivial, with multiplicity $(d-1)n_{\rho}$ when $G$ is free and $\rho$ is nontrivial, and with multiplicity $\rk G$ when $\rho$ is trivial.  We can use this observation to prove that in general, the action of a mapping class on the homology of a finite cover is highly reducible:

\begin{proof}[Proof of Theorem \ref{t:reducible}]
Notice that there is a finite index subgroup of $H_1(\Sigma,\bZ)$, a basis for which is given by curves which lift to $\Sigma'$.  We can assume that these curves are Poincar\'e dual to a basis for the pullback of $H^1(\Sigma,\bZ)$.  Tensoring with $F$, we get a subspace of $H_1(\Sigma',F)$ which is isomorphic to the pullback of $H^1(\Sigma,F)$ by duality.  This is evidently a proper subspace and is invariant under $\psi$ and $\gam$.  The second of these claims follows since $H_1(\Sigma',F)$ decomposes as a direct sum of irreducible modules corresponding to irreducible characters $\chi$ of $\rho$: \[H_1(\Sigma,F)\cong\bigoplus_{\chi}V_{\chi}.\]  The identified subspace which is dual to the pullback of the cohomology of the base corresponds to the trivial representation.

The action of $\psi$ twists the representations of $\gam$, so that $\rho_{\psi}(\gamma):=\rho(\psi(\gamma))$ may not be equal to $\rho$.  If $\rho$ is trivial, then the two are obviously equal.  It follows that the isotypic component of trivial character is $\psi$-invariant, and this is the dual of the pullback of the cohomology of the base.

Since $\psi$ is an automorphism of the finite group $\gam$, some power of $\psi$ acts trivially on $\gam$ and hence preserves the isotypic components corresponding to all irreducible characters of $\gam$.  The second claim follows.
\end{proof}

The method of the previous proof is soft and applies to groups in general.  It seems that whenever $\psi$ is an automorphism of a group with at least some finite index $\psi$-invariant subgroups, then the action of $\psi$ on the homology of these subgroups will be highly reducible.

\begin{proof}[Proof of Theorem \ref{t:nontrivial}]
Let $\gam$ be as in the statement of the theorem and suppose the contrary, so that $\psi(d)=d$ for all $d\in H_1(\Sigma_{\gam},\bZ)$.  Choose $d$ which rationally generates a regular representation, which we can assume to be an integral class by replacing it with a multiple if necessary.  If $\gamma\in\gam$, we have \[\gamma\cdot d=\psi(\gamma\cdot d)=\psi(\gamma)\cdot d,\] which is a contradiction since $d$ generates a regular representation.
\end{proof}

An immediate corollary of the proof of Theorem \ref{t:nontrivial} is the following general statement:
\begin{cor}
Let $G$ be a residually finite group and suppose that each $g\in G$ acts nontrivially by conjugation on the homology of some finite index subgroup $G'_g$ of $G$.  Then each $\psi\in\Aut(G)$ acts nontrivially on the homology of some finite index subgroup $G'_{\psi}$ of $G$.
\end{cor}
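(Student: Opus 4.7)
The plan is to imitate the argument given for Theorem~\ref{t:nontrivial} essentially verbatim, with the standing hypothesis of the corollary substituted as a black-box replacement for Lemma~\ref{l:cw}. I assume throughout that $G$ is finitely generated, in keeping with the conventions of the section; this guarantees that every finite index subgroup of $G$ contains a finite index characteristic subgroup. I also interpret the hypothesis rationally (the $\bZ$-version follows from the $\bQ$-version at the cost of an appeal to universal coefficients and a separate treatment of the rationally-trivial torsion case).

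First, I invoke Baumslag's lemma to produce a finite characteristic quotient $G/K_0$ on which $\psi$ descends to a nontrivial automorphism, and then pick $g_0\in G$ with $h:=\psi(g_0)g_0^{-1}\notin K_0$; in particular $h$ is a nontrivial element of $G$. Applying the hypothesis to $h$ then yields a finite index subgroup $G'_h\leq G$ on which conjugation by $h$ acts nontrivially on $H_1(G'_h,\bQ)$.

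Next, I refine: take $K$ to be a finite index characteristic subgroup of $G$ contained in $G'_h$, so that $K$ is simultaneously $\psi$-invariant, $h$-invariant, and normal in $G$. The standard group-theoretic transfer $\tau\colon H_1(G'_h,\bQ)\to H_1(K,\bQ)$ is $h$-equivariant and satisfies $i_*\circ\tau=[G'_h:K]\cdot\mathrm{id}$, so triviality of conjugation by $h$ on $H_1(K,\bQ)$ would force triviality on $H_1(G'_h,\bQ)$, a contradiction. Therefore conjugation by $h$ acts nontrivially on $H_1(K,\bQ)$.

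The final step is the contradiction, essentially copied from the proof of Theorem~\ref{t:nontrivial}. Suppose for contradiction that $\psi$ acts trivially on $H_1(K,\bQ)$. Because $K$ is $\psi$-invariant and normal in $G$, the compatibility $\psi(\gamma\cdot d)=\psi(\gamma)\cdot\psi(d)$ holds for every $\gamma\in G$ and $d\in H_1(K,\bQ)$; combined with $\psi(d)=d$ this gives $\gamma\cdot d=\psi(\gamma)\cdot d$, and specializing to $\gamma=g_0$ says $h$ acts trivially on $H_1(K,\bQ)$, contradicting the previous paragraph. So $\psi$ acts nontrivially on $H_1(K,\bQ)$ and we may take $G'_\psi:=K$. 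The main technical obstacle I anticipate is the simultaneous control of subgroups: the hypothesis furnishes a subgroup $G'_h$ tailored to $h$, while the Theorem~\ref{t:nontrivial} identity requires a $\psi$-invariant subgroup, and reconciling these is precisely what drives the characteristic refinement together with the ensuing transfer argument.
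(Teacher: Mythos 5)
Your proof is correct and follows the paper's intended route: the paper derives this corollary directly from the identity $\gamma\cdot d=\psi(\gamma\cdot d)=\psi(\gamma)\cdot d$ appearing in the proof of Theorem \ref{t:nontrivial}, which is exactly your final step. The characteristic refinement and transfer argument you supply to reconcile the hypothesis's subgroup $G'_h$ with a $\psi$-invariant one is a detail the paper leaves implicit (it declares the corollary ``immediate''), and neither your rational-coefficients caveat nor the superfluous appeal to Baumslag's lemma (any $g_0$ with $\psi(g_0)\neq g_0$ suffices) affects correctness.
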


After some conversations with the author about early drafts of this paper, Stefan Friedl independently found a proof of Theorem \ref{t:nontrivial} which is essentially the same.
The various corollaries to Theorem \ref{t:nontrivial} follow from the fact that $G$ is residually solvable, nilpotent, $p$, etc.

For certain mapping classes one can explicitly exhibit finite covers of $\Sigma$ to which these mapping classes lift and act nontrivially on the integral homology of the cover.  We can do this explicitly for any Dehn twist, and the idea is identical to the lifting of separating curves to separating ones:

\begin{prop}
Let $p$ be a prime and $c\subset\Sigma$ a simple closed curve on $\Sigma$.  Let $T_c$ denote a Dehn twist about $c$.  Then there is a degree $p$ cover $\Sigma_p$ of $\Sigma$ to which $T_c$ lifts and acts nontrivially on $H_1(\Sigma_p,\bZ)$ when $p\neq 2$, and a degree $4$ cover with this property otherwise.
\end{prop}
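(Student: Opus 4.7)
The plan is to split into two cases according to whether $c$ separates $\Sigma$. If $c$ is nonseparating then $[c]\neq 0$ in $H_1(\Sigma,\bZ)$ and $T_c$ already acts nontrivially on $H_1(\Sigma,\bZ)$ via the standard transvection $x\mapsto x+\langle x,c\rangle c$. Any degree $p$ cover to which $T_c$ lifts, for instance one coming from a homomorphism $\pi_1(\Sigma)\to\bZ/p$ that vanishes on $c$, then inherits a nontrivial lifted action: the pushforward $p_*$ on rational homology is surjective and intertwines $T_c$ with its lift, so a trivially acting lift would force $T_c$ itself to act trivially.

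For the substantive case, assume $c$ separates $\Sigma$ into $\Sigma_1\cup_c\Sigma_2$. I would construct a surjection $\phi:\pi_1(\Sigma)\to\bZ/p$ with $\phi(c)=0$ whose restrictions $\phi|_{\pi_1(\Sigma_i)}$ are both surjective. Since $c$ is essential and non-peripheral, neither $\Sigma_i$ is a disk or an annulus, so $H_1(\Sigma_i,\bZ)$ has rank at least two. On each side one then selects a surjection $\phi_i:H_1(\Sigma_i,\bZ)\to\bZ/p$ killing the class $[c]$; these glue to a well-defined $\phi$ on the amalgamated product $\pi_1(\Sigma_1)*_{\langle c\rangle}\pi_1(\Sigma_2)$ because both $\phi_i$ vanish on $c$.

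In the resulting cover $\Sigma_p$, each preimage $\tilde\Sigma_i$ is a connected $p$-fold cover with exactly $p$ boundary components, and the two halves are glued along $p$ disjoint lifts $c_0,\ldots,c_{p-1}$ of $c$. Cutting along any single $c_i$ leaves $\Sigma_p$ connected via the remaining $p-1\geq 1$ gluings, so each $c_i$ is nonseparating and $[c_i]\neq 0$ in $H_1(\Sigma_p,\bZ)$. The canonical lift of $T_c$ is the commuting product $T_{c_0}T_{c_1}\cdots T_{c_{p-1}}$, which acts on $H_1(\Sigma_p,\bZ)$ by $x\mapsto x+\sum_i\langle x,c_i\rangle c_i$. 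Since some $[c_i]$ is nonzero, this linear operator is nonzero; any class $x$ with nonzero algebraic intersection against $c_0$ provides an explicit witness.

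For $p=2$ the same procedure gives the desired behavior, but to sidestep possible cancellations in $\bZ/2$-coefficients one can pass to the degree $4$ cover obtained as the fiber product of two independent $\bZ/2$-quotients of the above type, each surjective on $\pi_1(\Sigma_1)$ and $\pi_1(\Sigma_2)$ and killing $c$. The main obstacle in the argument is the existence of a homomorphism $\phi$ whose restriction to each side of $c$ is surjective, because this surjectivity is exactly what forces the lifts $c_i$ to be nonseparating in the cover, which in turn makes them non-null in $H_1(\Sigma_p,\bZ)$ and makes the transvection formula produce a nonzero operator.
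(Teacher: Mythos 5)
Your proof is correct and follows essentially the same route as the paper: reduce to the separating case, build a degree $p$ cover in which $c$ lifts to $p$ disjoint nonseparating curves (you make explicit the homomorphism $\phi:\pi_1(\Sigma)\to\bZ/p$ surjective on both sides of $c$, which the paper leaves implicit), and show the lifted multitwist is homologically nontrivial. One step deserves a word of justification: ``since some $[c_i]$ is nonzero, the operator is nonzero'' is not a valid inference by itself, because the terms of $N(x)=\sum_i\langle x,c_i\rangle c_i$ could in principle cancel. The clean fix is that all the lifted twists have the same sign, so $\langle N(x),x\rangle=-\sum_i\langle x,c_i\rangle^2$, which is strictly negative for any $x$ meeting $c_0$ algebraically nontrivially; such an $x$ exists because $c_0$ is nonseparating. (The paper instead exhibits a curve crossing exactly two of the lifts.) Finally, your instinct that the degree $2$ cover already works is in fact right: the lift there is $T_{c_0}T_{c_1}$ with both twists positive, acting by $x\mapsto x+2\langle x,c_0\rangle c_0\neq x$ --- it is the bounding-pair map $T_{c_0}T_{c_1}^{-1}$, not this product, that lies in the Torelli group --- so there is no $\bZ/2$-cancellation to sidestep; the degree $4$ fiber product is needed only to match the literal wording of the statement.
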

\begin{proof}
We may evidently assume that $c$ is separating.  Construct a cover $\Sigma_p$ to which $c$ lifts to $p$ nonseparating curves.  It is clear that when $p>2$ that the simultaneous lifted Dehn twist about these curves acts nontrivially on the homology of $\Sigma_p$.  This can be easily seen by constructing a nonseparating curve which intersects only $2$ of the lifts of $c$.

When $p=2$, $\Sigma_p$ will not do the trick since then $T_c$ lifts to a twist about a bounding pair, which is still an element of the Torelli group.  By taking a $4$-fold cover, we find $T_c$ lifts out of the Torelli group.
\end{proof}

It is conceivable that the methods of the previous proposition could be used to give another more topological proof of Theorem \ref{t:nontrivial}, but there seem to be many technical difficulties arising due to twists about single curves lifting to twists about multicurves.  We can easily see the following, however:

\begin{cor}
Let $\psi\in\Mod(\Sigma)$ be a Dehn twist about a simple closed curve, or more generally a product of Dehn twists about a multicurve in $\Sigma$.  Then $\psi$ acts with infinite order on the homology of a finite cover of $\Sigma$.  We may assume that the cover is a $p$-cover.  In particular, $\psi$ does not lift to an inner automorphism on a finite cover.
\end{cor}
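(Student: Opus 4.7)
The plan is to combine a linear-algebraic property of multitwists with the corollary to Theorem~\ref{t:nontrivial}. The key observation is that any multitwist $T = T_{c_1}^{a_1}\cdots T_{c_k}^{a_k}$ about a multicurve on any surface $S$ acts on $H_1(S,\bZ)$ by the transvection formula
\[
T_*(x) \;=\; x + \sum_{i=1}^k a_i\,\langle x,[c_i]\rangle\,[c_i],
\]
where $\langle\cdot,\cdot\rangle$ is the algebraic intersection pairing. Such a map fixes each $[c_j]$ and induces the identity on the quotient by the span of the $[c_i]$, so it is unipotent. Consequently $T_*$ on homology is either the identity or of infinite order, with no intermediate torsion behaviour possible.

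Next I would verify that the lift of $\psi$ to any finite regular cover $\Sigma'\to\Sigma$ is again a multitwist on $\Sigma'$. A Dehn twist is supported in a closed annular neighbourhood of its core curve, and the preimage of an annulus in any regular cover is a disjoint union of annuli, each a finite cyclic cover of the base annulus. The restriction of the lift of $\psi$ to each such annular component is a homeomorphism of an annulus which is the identity near the boundary, hence an integer power of the Dehn twist about the core. In particular the lift of $\psi$ is supported in the preimage of a regular neighbourhood of the original multicurve and is itself a multitwist, so by the first paragraph its action on $H_1(\Sigma',\bZ)$ is unipotent.

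Now I would invoke the corollary to Theorem~\ref{t:nontrivial}: because $\psi$ is a nontrivial element of $\Mod(\Sigma)\hookrightarrow\Out(\pi_1(\Sigma))$, there exists a finite characteristic $p$-cover $\Sigma'\to\Sigma$ on which $\psi$ acts nontrivially on $H_1(\Sigma',\bZ)$. Combined with the unipotence from the previous step, this immediately forces the action of $\psi$ on $H_1(\Sigma',\bZ)$ to have infinite order. For the final assertion, any inner automorphism of $\pi_1(\Sigma')$ induces the identity on the abelianisation $H_1(\Sigma',\bZ)$; hence if the lift of $\psi$ coincided with an inner automorphism, its action on $H_1(\Sigma',\bZ)$ would be trivial, contradicting the infinite-order conclusion.

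The only genuinely delicate step is the multitwist lifting observation in the second paragraph. One must be careful about which homeomorphism of $\Sigma'$ realises the lifted automorphism of $\pi_1(\Sigma')$, and about the fact that on each annular component of the preimage the twisting exponent may differ from the original, with some Dehn twist potentially lifting only to a power of a multitwist. Neither subtlety affects the conclusion, since every integer combination of Dehn twists about disjoint simple closed curves still acts unipotently on homology, but this is the one place where the injectivity of $\Mod^1(\Sigma)\hookrightarrow\Aut(\pi_1(\Sigma))$, and the consequent uniqueness of lifts, is essential.
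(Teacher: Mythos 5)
Your proof is correct, but it reaches the conclusion by a different route from the paper's. The paper deduces this corollary from the Proposition immediately preceding it, which \emph{explicitly} constructs a degree-$p$ cyclic cover (detected by mod-$p$ algebraic intersection numbers, with a degree-$4$ cover needed when $p=2$ because a separating twist lifts to a bounding-pair map on the double cover) on which the lifted twist visibly leaves the Torelli group; the infinite-order and non-inner statements then follow from the same unipotence observation you make. You instead obtain nontriviality of the homological action from Theorem \ref{t:nontrivial} and its corollary (residual $p$-finiteness of $\Aut(\pi_1(\Sigma))$ together with the Chevalley--Weil computation). This is less constructive --- it does not exhibit the cover or its degree --- but it is more uniform: it treats an arbitrary multitwist in one stroke, with no case analysis at $p=2$ and no need to adapt the curve-lifting construction to several twisting curves whose homological contributions might otherwise cancel. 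The ingredient common to both arguments, and the one that really drives the corollary, is that the lift of a multitwist to a finite cover is again a multitwist, hence acts unipotently on $H_1(\Sigma',\bZ)$, so that nontrivial forces infinite order; your verification of the lifting step (the lift fixing a basepoint chosen outside the support is supported on the annular preimages and realizes $\psi|_{\pi_1(\Sigma')}$) is correct.

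One small refinement concerning the final assertion: as written, your contradiction only excludes $\psi$ restricting to an inner automorphism on the particular cover $\Sigma'$ where infinite order was established. To exclude inner lifts on an arbitrary finite cover $\Sigma_0$, pass to a finite characteristic cover $\Sigma''$ with $\pi_1(\Sigma'')\subset\pi_1(\Sigma_0)\cap\pi_1(\Sigma')$: the lift of $\psi$ to $\Sigma''$ is still a nontrivial unipotent on homology (the $\psi$-equivariant pushforward $H_1(\Sigma'',\bQ)\to H_1(\Sigma',\bQ)$ is surjective), whereas the restriction of an inner automorphism of $\pi_1(\Sigma_0)$ acts on $H_1(\Sigma'',\bZ)$ through the finite deck group of $\Sigma''\to\Sigma_0$. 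This is a one-line patch, not a gap in the method.
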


Recall that if $1\neq\psi\in\Mod(\Sigma)$ (or $\in\Out(F_n)$), then we can lift $\psi$ to an automorphism of $\pi_1(\Sigma)$ (or $F_n)$ and find a finite quotient $\gam$ of $\pi_1(\Sigma)$ ($F_n$) on which $\psi$ acts by a non-inner automorphism.  We thus obtain the extension of Theorem \ref{t:nontrivial} which we mentioned in the introduction:

\begin{cor}
Each homeomorphism of $\Sigma$ which is not isotopic to the identity acts nontrivially on the homology of some finite cover $\Sigma'$ of $\Sigma$.  Furthermore, we may assume that the action does not coincide with the action of any element of the deck group of the covering.
\end{cor}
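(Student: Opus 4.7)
The plan is to combine Theorem \ref{t:nontrivial} with the residual finiteness of $\Out(\pi_1(\Sigma))$, as recalled in the preceding section. Given a homeomorphism not isotopic to the identity, its class $\psi \in \Mod(\Sigma)$ is non-trivial. Using surjectivity of $\Mod^1(\Sigma) \to \Mod(\Sigma)$ from the Birman exact sequence, I would lift $\psi$ to an element of $\Mod^1(\Sigma)$, which then injects into $\Aut(\pi_1(\Sigma))$, and observe that its image in $\Out(\pi_1(\Sigma)) \cong \Mod(\Sigma)$ is still non-trivial. Since $\Out(\pi_1(\Sigma))$ is residually finite, the equivalent characterization from the previous section---that any non-inner automorphism descends to a non-inner automorphism of some finite characteristic quotient---produces a finite characteristic quotient $q \colon \pi_1(\Sigma) \to \gam$ on which the descended automorphism $\bar\psi \in \Aut(\gam)$ is \emph{non-inner}. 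In particular $\bar\psi$ is non-trivial in $\Aut(\gam)$, so Theorem \ref{t:nontrivial} applies and $\psi$ acts nontrivially on $H_1(\Sigma_{\gam}, \bZ)$. This establishes the first assertion.

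For the furthermore clause I would argue on the same cover $\Sigma_{\gam}$ by contradiction. Suppose the action of $\psi$ on $H_1(\Sigma_{\gam}, \bZ)$ coincides with that of some deck transformation $g \in \gam$. Pick any lift $\tilde g \in \pi_1(\Sigma)$ of $g$ and form $\phi := \psi \circ c_{\tilde g}^{-1} \in \Aut(\pi_1(\Sigma))$, where $c_{\tilde g}$ denotes conjugation by $\tilde g$. By hypothesis, $\phi$ acts trivially on $H_1(\Sigma_{\gam}, \bZ)$. On the other hand, the induced automorphism $\bar\phi \in \Aut(\gam)$ is non-trivial: if $\bar\phi$ were the identity, then $\bar\psi$ would coincide with $c_g$ on $\gam$, contradicting the non-innerness of $\bar\psi$. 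A second application of Theorem \ref{t:nontrivial}---which, by the remark following its statement, is valid for arbitrary automorphisms of $\pi_1(\Sigma)$ and not only for those induced by homeomorphisms---then forces $\phi$ to act nontrivially on $H_1(\Sigma_{\gam}, \bZ)$, a contradiction.

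One minor bookkeeping point is that the property of coinciding with \emph{some} deck element is independent of which lift of $\psi$ to $\pi_1(\Sigma_{\gam})$ we select (and equivalently of the ambiguity in the choice of $\tilde g$), since different choices differ by composition with deck transformations and thus preserve membership in the finite subgroup $\gam \subset \Aut(H_1(\Sigma_{\gam}, \bZ))$. The whole argument rests on the residual finiteness of $\Out(\pi_1(\Sigma))$, which is what allows one to upgrade ``nontrivial on $\gam$'' to ``non-inner on $\gam$'' at the cost of passing to a deeper characteristic cover. With Theorem \ref{t:nontrivial} and this residual finiteness both already in hand, I do not anticipate any serious obstacle.
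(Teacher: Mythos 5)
Your proof is correct and follows essentially the same route as the paper: both arguments rest on the corollary that a non-inner automorphism descends to a non-inner automorphism of some finite characteristic quotient $\gam$ (residual finiteness of $\Out(\pi_1(\Sigma))$), combined with Theorem \ref{t:nontrivial}. The only cosmetic difference is that you invoke the theorem as a black box after precomposing with $c_{\tilde g}^{-1}$, whereas the paper reruns the regular-representation computation directly on the element $d$; these are the same argument.
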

\begin{proof}
There is a finite characteristic quotient $\gam$ of $\pi_1(\Sigma)$ which has the property that $\psi(g)$ is not conjugate to $g$ for any $g\in\gam$.  It follows that there is a $d\in H_1(\Sigma,\bZ)$ such that $\psi(g\cdot d)\neq (h^{-1}gh)\cdot d$ for any $h\in\gam$.
\end{proof}

The exact same argument holds for non-inner automorphisms of free groups.

\section{Encoding closed curves on $\Sigma$ in $H(\Sigma)$}\label{s:encode}
In this section we describe two natural ways of taking an essential closed curve $\gamma\subset\Sigma$ and producing a unique piece of data in $H(\Sigma)$.  In this sense, we shall encode the set of curves in $H(\Sigma)$.

The first method produces so-called {\bf finite type vectors}.  We will associate to each element $\gamma$ of $\pi_1(\Sigma)$ a unique vector $v_{\gamma}\in H(\Sigma)$.  Since elements of $\pi_1(\Sigma)$ are in bijective correspondence with based homotopy classes of loops in $\Sigma$, conjugacy classes of elements of $\pi_1(\Sigma)$ will correspond to free homotopy classes of loops in $\Sigma$.  The set of finite type vectors in $H(\Sigma)$ will have a natural action of $\pi_1(\Sigma)$, and as such that $\pi_1(\Sigma)$--orbits of finite type vectors will be in bijective correspondence with free homotopy classes of essential closed curves in $\Sigma$.  This method associates to each based curve a vector, but unfortunately the span of the finite type vectors is not a subrepresentation of $H(\Sigma)$ in any natural way.

The second method is more representation-theoretic.  To each finite characteristic cover $\Sigma'\to\Sigma$ with deck group $\gam$ we will consider the $\bC\gam$--module $H(\Sigma',\bC)$.  Each irreducible representation of $\gam$ occurs as a direct summand of $H(\Sigma',\bC)$.  Let $\gamma\subset\Sigma$ be an essential closed curve.  On $\Sigma'$, there are two possibilities.  Either $\gamma$ lifts to a union of closed curves or it does not.  The first case occurs exactly when we represent $\gamma$ as a based homotopy class of loops and have that $\gamma$ is contained in the kernel of the map $\pi_1(\Sigma)\to\gam$.  When $\gamma$ lifts to a curve, then each lift of $\gamma$ represents a homology class.  Now suppose that $\gamma$ does not lift.  Representing $\gamma$ by a based loop, we get an action of $\gamma$ on each irreducible representation of $\gam$ in $H_1(\Sigma',\bC)$ or $H^1(\Sigma',\bC)$.  Taking the trace of the action, we get a complex number which does not depend on the choice of lift.  Thus to a based homotopy class of loops $\gamma$ we associate the values of $\chi(\gamma)$, where $\chi$ ranges over all irreducible characters of characteristic quotients of $\pi_1(\Sigma)$.

We will now be more explicit about these constructions.
Let $1\neq \gamma\in \pi_1(\Sigma)$ and suppose that $\gam$ is an abelian quotient of $\pi_1(\Sigma)$.  Assume furthermore that $\gamma$ is non-peripheral.  We can write $\gamma=[\gamma]c$, where $[\gamma]$ is the homology class of $\gamma$ and $c\in [\pi_1(\Sigma),\pi_1(\Sigma)]$.  We identify $[\gamma]$ with a cohomology class which we also call $[\gamma]$ via intersection number (taken relative to the punctures if $\Sigma$ is not closed).  Now consider $H^1(\Sigma',\bQ)$.  We see that algebraic intersection number with $c$ identifies $c$ with a cohomology class $[c]$ in $H^1(\Sigma',\bQ)$, which projects trivially onto the pullback of the cohomology of $\Sigma$.  The finite type vector in $H^1(\Sigma',\bQ)$ should be the vector \[ [\gamma]+[c],\] where we have written \[H^1(\Sigma',\bQ)=H^1(\Sigma,\bQ)\oplus V\] for some $V$.  Without making some choices, this expression is not unique.  We shall soon make precise the choices that need to be made to make the expression canonical.

We are now in a position to define the map $\iota$ for the profinite solvable rational cohomology of $\Sigma$ when $\Sigma$ is closed.  When $\Sigma$ is not closed, we need to consider cohomology relative to the punctures.  We will associate to $g\in\pi_1(\Sigma)$ a vector $v_g\in H(\Sigma)$.  The strategy is to take an element $g\in\pi_1(\Sigma)$ and to look at its ``components" in each $S_i=\pi_1(\Sigma)/D_i(\pi_1(\Sigma))$, the universal $i$-step solvable quotients of $\pi_1(\Sigma)$.  Here $D_i(\pi_1(\Sigma))$ is the $i^{th}$ term of the derived series of $\pi_1(\Sigma)$: \[D_0(\pi_1(\Sigma))=\pi_1(\Sigma),\] \[D_n(\pi_1(\Sigma))=[D_{n-1}(\pi_1(\Sigma)),D_{n-1}(\pi_1(\Sigma))].\]  Comparing the components of $g$ in $S_i$ and $S_{i+1}$ gives us an element in \[D_i(\pi_1(\Sigma))/D_{i+1}(\pi_1(\Sigma)),\] and therefore a homology class of $D_i(\pi_1(\Sigma))$.

First, choose a set of coset representatives for every finite solvable quotient of $\pi_1(\Sigma)$.  It is possible to arrange the choice compatibly, so that if $T_{\gam}$ is a transversal for $\gam$ and $\gam'$ is a quotient of $\gam$, then $T_{\gam'}$ is identified with a subset of $T_{\gam}$.  We thus obtain a set of coset representatives for $D_i(\pi_1(\Sigma))$ for all $i$ as well, since $D_i(\pi_1(\Sigma))$ is the intersection of all finite index subgroups of $\pi_1(\Sigma)$ which contain $D_i(\pi_1(\Sigma))$.  An example of such a choice of coset representatives is to take a basis for $D_i(\pi_1(\Sigma))/D_{i+1}(\pi_1(\Sigma))$ for all $i$, which we then pull back to $\pi_1(\Sigma)$ in some way.  For each finite quotient $F$ of $D_i(\pi_1(\Sigma))/D_{i+1}(\pi_1(\Sigma))$, simply choose elements of $D_i(\pi_1(\Sigma))/D_{i+1}(\pi_1(\Sigma))$ which map onto each $f\in F$.

Let $\gam$ be a finite solvable quotient of $\pi_1(\Sigma)$.  We filter $\gam$ by its derived series to get \[\gam=\gam_0>\gam_1>\cdots>\gam_n=\{1\}.\]  Each quotient $Q_i=\gam/\gam_i$ of $\gam$ gives us a cover $\Sigma_{Q_i}$ of $\Sigma$, and we obtain a tower of covers \[\Sigma_{\gam}\to \Sigma_{Q_{n-1}}\to\cdots\to\Sigma_{Q_1}\to\Sigma,\] where each of the successive deck groups is abelian.  The rational cohomology of $\Sigma_{Q_{i+1}}$ consists of the pullback of $H^1(\Sigma_{Q_i},\bQ)$, together with some new cohomology equipped with a nontrivial action of the abelian group $\gam_i/\gam_{i+1}$.

Let $g\in\pi_1(\Sigma)$.  The homology class of $g$ is $[\gamma]$, which we represent by a pre-chosen coset representative $\gamma$ of $\pi_1(\Sigma)$.  The choice of $\gamma$ is canonical after a choice of coset representatives.  Then $c_1=\gamma^{-1}g\in D_1(\pi_1(\Sigma))$, the first term of the derived series of $\pi_1(\Sigma)$.  Suppose $c_1$ is nontrivial in $\gam$.  Then there is a smallest $i$ such that $c_1$ becomes a homology class in $H_1(\Sigma_{Q_i},\bQ)$, and we record the Poincar\'e dual of this homology class as the entry of $\iota(g)$ corresponding to the cover $\Sigma_{Q_i}\to\Sigma$.

In general, suppose that in $\pi_1(\Sigma)/D_{n+1}(\Sigma)$ we have a representation of the image $g=\gamma\cdot c_1\cdots c_n$, where each $c_i\in D_i(\pi_1(\Sigma))$.  Again, the $c_i$ are canonical after a choice of coset representatives.  If $\gam$ is any finite $n$-step solvable quotient of $\pi_1(\Sigma)$, we define the entries of $\iota(g)$ for all the intermediate covers coming from quotients of $\gam$ as follows.  We take such a quotient $Q$, look at the longest terminal segment $c_i\cdots c_n$ which is trivial in $Q$, and record the homology class of $c_i\cdots c_n$ in $H_1(\Sigma,\bQ)$.  Dualizing, we get a cohomology class, and thus a definition of $\iota$.

When $\Sigma$ has a finite set $P$ of punctures, we need to look at homology and cohomology relative to the punctures to be able to get a duality isomorphism.  This is because Poincar\'e duality gives an isomorphism $H_1(\Sigma,\bZ)\to H^1(\Sigma,P,\bZ)$.

To summarize, $\iota$ is a map $\pi_1(\Sigma)\to H(\Sigma)$.  $H(\Sigma)$ can be thought of as infinite tuples whose entries are classes in $H^1(\Sigma',\bQ)$ for some characteristic solvable cover $\Sigma'\to \Sigma$ with deck group $\gam$.  We assume that $\iota$ has been defined for every characteristic cover lying between $\Sigma'$ and $\Sigma$.  Let $g\in\pi_1(\Sigma)$.  Since $\gam$ is solvable, we have specified a unique coset representative for $g\in\gam$, so that $g=t\cdot c$, where $c\in\pi_1(\Sigma')$.  The entry of $\iota(g)$ corresponding to $\Sigma'$ is the cohomology class given by algebraic intersection number with $c$.

Finally, we call a finite-type vector $v_g$ {\bf peripheral} if $g$ is in the free homotopy class of a small loop about a puncture of $\Sigma$.  The vector $v_g$ is non-peripheral if it is not peripheral.

\begin{lemma}
The map $\iota$ is well-defined.
\end{lemma}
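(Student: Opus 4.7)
Having fixed the compatible system of transversals, to each finite characteristic solvable cover $\Sigma' \to \Sigma$ with kernel $N = \pi_1(\Sigma')$ and deck group $\gamma = \pi_1(\Sigma)/N$, the recipe attaches to $g$ the entry $\iota_{\Sigma'}(g) \in H^1(\Sigma',P,\mathbb{Q})$ given as the Poincar\'e--Lefschetz dual of $[t^{-1}g] \in H_1(\Sigma',\mathbb{Q})$, where $t \in T_\gamma$ is the chosen representative of the coset $gN$. I would prove the lemma in two stages.

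\emph{Stage 1: Each entry is unambiguous.} Given the fixed transversal $T_\gamma$, the element $t$ is uniquely specified by the coset $gN$, so $c := t^{-1}g$ is a uniquely determined element of $N$. Its image in $N^{\mathrm{ab}}\otimes\mathbb{Q} = H_1(\Sigma',\mathbb{Q})$, and hence its Poincar\'e--Lefschetz dual class in $H^1(\Sigma',P,\mathbb{Q})$, depend only on $g$ and the fixed transversal data. So at each level the recipe returns a canonical vector.

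\emph{Stage 2: The family of entries is coherent.} For a tower $\Sigma''\to\Sigma'\to\Sigma$ of finite characteristic solvable covers with kernels $N'\subset N$ and deck groups $\gamma''\twoheadrightarrow\gamma$, write $g = tc = t'c'$ with $t\in T_\gamma$, $t'\in T_{\gamma''}$. Then $(t')^{-1}t = c'c^{-1}\in N$. The compatibility $T_\gamma\subset T_{\gamma''}$ of transversals, combined with the derived-series-based construction using successive bases of $D_i(\pi_1(\Sigma))/D_{i+1}(\pi_1(\Sigma))$, is what I would use to show that $(t')^{-1}t$ lies in the commutator subgroup of $N$ after rationalization. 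This is precisely what is required for the bonding map $H_1(\Sigma'',\mathbb{Q})\to H_1(\Sigma',\mathbb{Q})$, induced on abelianizations by the inclusion $N'\hookrightarrow N$, to carry $[c']$ to $[c]$. Hence $\iota(g)$ assembles into a bona fide element of $H(\Sigma)$.

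\textbf{Main obstacle.} The hard part is Stage 2: the verification that the discrepancy $(t')^{-1}t$ between two chosen representatives at nested levels sits in the commutator subgroup of $N$ modulo torsion. The natural approach is to unwind the canonical expression $g = \gamma\cdot c_1\cdots c_n$ modulo $D_{n+1}(\pi_1(\Sigma))$ with $c_i\in D_i$, and check, using that the transversal at each level is obtained by truncating this expression at the appropriate depth in the derived filtration, that the difference between $t$ and $t'$ is a product of factors $c_i$ whose indices $i$ are large enough for these factors to vanish in $N^{\mathrm{ab}}\otimes\mathbb{Q}$. This combinatorial bookkeeping in the derived filtration is, I expect, the main technical content of the lemma.
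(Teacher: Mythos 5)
Your Stage 1 is precisely the paper's argument: once the compatible system of coset representatives is fixed, the representative $t$ of the coset $gN$, hence $c=t^{-1}g\in N$ and its class in $H_1(\Sigma',\bQ)$, are uniquely determined, so each entry of $\iota(g)$ is unambiguous. The paper's proof consists of exactly this observation and nothing more.

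Stage 2, however, is both deferred (you say what you "would use" and what you "expect," without carrying out the verification) and, more seriously, aimed at a claim that is false. The discrepancy $(t')^{-1}t$ between representatives at nested levels does \emph{not} in general lie in $[N,N]$ modulo torsion, and the entries of $\iota(g)$ are \emph{not} intertwined by the bonding maps induced by $N'\hookrightarrow N$. Take any $g$ lying in $N$ but not in $N'$ (for instance $g=a^2$ with $a$ a generator, $N$ the mod-$2$ homology cover and $N'$ the mod-$4$ homology cover): then $t=1$ and the entry at $\Sigma'$ is the nonzero class $[g]\in H_1(\Sigma',\bQ)$, while $t'$ may be taken to be $g$ itself, so $c'=1$ and the entry at $\Sigma''$ is $0$, which maps to $0\neq[g]$ downstairs; equivalently $(t')^{-1}t=(t')^{-1}$ represents a nontrivial coset of $N'$ in $N$ and survives in $H_1(N,\bQ)$. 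This is not a defect of the construction but of the compatibility you are trying to impose: by design, the entry of $\iota(g)$ at a cover $Q$ records the longest terminal segment $c_i\cdots c_n$ of the derived-series expansion of $g$ that becomes a homology class on $\Sigma_Q$, and at a deeper cover one records a shorter terminal segment; successive entries differ by the classes of the intermediate factors and are deliberately not images of one another. Here $H(\Sigma)$ is being used as a space of tuples indexed by the covers, one entry per cover, so well-definedness is exactly your Stage 1; insisting on coherence in the strict inverse limit would require redefining $\iota$, not proving your Stage 2.
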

\begin{proof}
This is an immediate consequence of the fact that we chose a compatible set of coset representatives for all finite solvable quotients of $\pi_1(\Sigma)$.
\end{proof}

The definition of $\iota$ shows that the image of $\pi_1(\Sigma)$ under $\iota$ is invariant under the action of $\whG$, the profinite completion of $\pi_1(\Sigma)$.

\begin{lemma}\label{l:injective}
The map $\iota:\pi_1(\Sigma)\to H(\Sigma)$ is injective.
\end{lemma}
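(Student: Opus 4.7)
The plan is to show injectivity by locating, for any two distinct elements $g, h \in \pi_1(\Sigma)$, a single finite solvable cover on which the corresponding entries of $\iota(g)$ and $\iota(h)$ differ. The key input is Lemma \ref{l:nontrivial}, which guarantees that any nontrivial element of $\pi_1(\Sigma)$ is homologically visible in some finite $p$-group cover; the rest is a direct computation tracing through the definition of $\iota$.

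Suppose for contradiction that $\iota(g) = \iota(h)$ while $g \neq h$, and set $k := gh^{-1} \neq 1$. Applying Lemma \ref{l:nontrivial} to $k$, there is a finite $p$-group quotient $\pi_1(\Sigma) \to \gam$ with kernel $K = \pi_1(\Sigma_{\gam})$ such that $0 \neq [k] \in H_1(K, \bZ)$. Since $p$-groups are solvable, $\gam$ appears in the inverse system defining $H(\Sigma)$, and since $k \in K$ the elements $g$ and $h$ have the same image in $\gam$.

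By compatibility of the chosen coset representatives, $g$ and $h$ then share a single representative $t := t_g = t_h$, so we can write $g = tc_g$ and $h = tc_h$ with $c_g, c_h \in K$. A direct computation gives
\[
c_g c_h^{-1} = t^{-1}(gh^{-1})t = t^{-1} k t,
\]
which lies in $K$ by normality and whose $H_1$-class equals $\bar t^{-1} \cdot [k]$ under the deck action of $\gam$ on $H_1(K, \bZ)$. Deck transformations act as linear automorphisms of homology, so $[k] \neq 0$ forces $[c_g] - [c_h] = \bar t^{-1} \cdot [k] \neq 0$ in $H_1(\Sigma_{\gam}, \bQ)$. Applying Poincar\'e--Lefschetz duality---in the relative form $H_1(\Sigma_{\gam}, \bQ) \cong H^1(\Sigma_{\gam}, P, \bQ)$ when $\Sigma$ has punctures---the cohomology classes dual to $c_g$ and $c_h$ are distinct. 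But these are by definition the $\Sigma_{\gam}$-entries of $\iota(g)$ and $\iota(h)$, contradicting the standing assumption.

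The only step requiring insight is the reduction to Lemma \ref{l:nontrivial}: once we have $[k] \neq 0$ inside some $p$-cover containing $k$, the identity $c_g c_h^{-1} = t^{-1} k t$ immediately produces differing $\iota$-entries. The rest is bookkeeping with the compatible coset representatives and an appeal to duality. The potential subtlety---that the deck twist $\bar t^{-1}$ could kill $[k]$---does not arise precisely because deck transformations permute the nonzero homology classes of the cover.
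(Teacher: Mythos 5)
Your proof is correct, and it takes a somewhat different route from the paper's. The paper compares the full derived-series expansions $([\gamma_1],c_{1,1},\ldots)$ and $([\gamma_2],c_{1,2},\ldots)$ of the two elements, invokes residual solvability to find the first stage at which they differ, and then appeals to residual $p$-by-torsion-free-abelianness to separate the differing tails homologically. You instead collapse the whole comparison to the single element $k=gh^{-1}$, detect it via Lemma \ref{l:nontrivial}, and verify by the explicit computation $c_gc_h^{-1}=t^{-1}kt$ that the shared coset representative cancels, so the difference of the two $\Sigma_{\gam}$-entries is a deck twist of $[k]$ and hence nonzero. This is a cleaner argument and, unlike the paper's, it makes explicit why the non-homomorphy of $\iota$ (which the paper flags as the reason one cannot simply show $\iota(\gamma)\neq 0$) is not actually an obstruction: at the level of a cover where $g$ and $h$ lie in the same coset, the entries do subtract correctly. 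One small point, shared with the paper's own proof and worth a sentence: Lemma \ref{l:nontrivial} produces a finite $p$-group quotient that need not a priori be characteristic or lie in the class $\mathcal{K}$ used to build $H(\Sigma)$; one must pass to a deeper characteristic cover in $\mathcal{K}$, which is harmless since the map $H_1(K')\to H_1(K)$ induced by an inclusion $K'<K$ carries $[k]$ to $[k]$, so nonvanishing persists.
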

\begin{proof}
Since $H(\Sigma)$ is a vector space, $\iota$ cannot a homomorphism if it is to be injective, so it is not sufficient to show that no $\gamma\in \pi_1(\Sigma)$ is trivial under $\iota$.  It is clear, however that if $1\neq\gamma\in \pi_1(\Sigma)$, then $\iota(\gamma)$ is nontrivial in light of Lemma \ref{l:nontrivial}.  Let $\gamma_1,\gamma_2$ be distinct homotopy classes of curves in $\Sigma$.  It follows that for some $i$, the expansions $([\gamma_1],c_{1,1},c_{2,1},\ldots,c_{i,1})$ and $([\gamma_2],c_{1,2},c_{2,2},\ldots,c_{i,2})$ for $\gamma_1$ and $\gamma_2$ in $\pi_1(\Sigma)/D_{i+1}(\pi_1(\Sigma))$ must differ because $\pi_1(\Sigma)$ is residually solvable.  Since $\pi_1(\Sigma)$ is residually $p$-by-torsion-free abelian, there is a finite $p$-cover of $\Sigma$ such that the cohomology classes dual to $c_{i,1}$ and $c_{i,2}$ are different, so that $\iota(\gamma_1)\neq\iota(\gamma_2)$.
\end{proof}

We call the image of $\pi_1(\Sigma)$ the finite type vectors in $H(\Sigma)$.  As we remarked in the introduction, free homotopy classes in $\Sigma$ are in bijective correspondence with conjugacy classes in $\pi_1(\Sigma)$, and hence with $\pi_1(\Sigma)$--orbits of finite type vectors in $H(\Sigma)$.  For the sake of clarity we remark that $\pi_1(\Sigma)$ acts by conjugation on the cohomology of each finite cover of $\Sigma$, and we represent finite type vectors as sums of vectors coming from various representations of the deck group of each cover.  Since these representations are clearly $\pi_1(\Sigma)$--invariant, we see that the conjugation action of $\pi_1(\Sigma)$ on itself and $H(\Sigma)$ commutes with the operation of taking finite type vectors.

To make $\psi\in\Mod(\Sigma)$ act on the finite type vectors, we start with $\gamma\subset\Sigma$ an essential closed curve and its image $\psi(\gamma)$.  To both of these curves we have associated $\pi_1(\Sigma)$--orbits of finite type vectors $v_{\gamma}$ and $v_{\psi(\gamma)}$.  The action of $\psi$ on finite type vectors should take the orbit of $v_{\gamma}$ to the orbit of $v_{\psi(\gamma)}$.

Let us now make a few remarks about the second method.  Since $\pi_1(\Sigma)$ is residually $\mathcal{K}$, where $\mathcal{K}$ is the class of finite, solvable, nilpotent, $p$-groups, etc., we have that each $\gamma\in\pi_1(\Sigma)$ is nontrivial in a $\mathcal{K}$--quotient of $\pi_1(\Sigma)$.  In particular for each $\gamma\in\pi_1(\Sigma)$, there is a $\mathcal{K}$--quotient $\gam$ of $\pi_1(\Sigma)$ and an irreducible character $\chi$ of $\gam$ such that $\chi(\gamma)\neq 0$.  Furthermore, the irreducible characters of $\gamma$ over $\bC$ span the vector space of class functions on $\gamma$, so that if $\gamma$ and $\gamma'$ are not conjugate in $\gam$ then there is an irreducible character which separates them.

\section{The Nielsen-Thurston classification of mapping classes and homology}\label{s:nt}
We are now in a position to give the proofs of the two Nielsen-Thurston classifications.
\begin{proof}[Proof of Theorem \ref{t:nt}]
Let $\psi\in\Mod(\Sigma)$ have finite order.  Lifting $\psi$ to $\Aut(\pi_1(\Sigma))$, some power of $\psi$ is an inner automorphism of $\pi_1(\Sigma)$.  Since we have a bijection between free homotopy classes of curves and $\pi_1(\Sigma)$-orbits of finite type vectors, the characterization of finite order mapping classes is immediate.  This can also be seen from the fact that surface groups are conjugacy separable with respect to the class of finite solvable quotients, so that if $g_1,g_2\in\pi_1(\Sigma)$ are not conjugate then they will be non-conjugate in a finite solvable quotient of $\pi_1(\Sigma)$ (in fact the strongest possible form of this statement is true: they will be non-conjugate in a finite $p$-group quotient of $\pi_1(\Sigma)$).

Reducible mapping classes clearly preserve the $\pi_1(\Sigma)$--orbit of a finite type nonperipheral vector.  Suppose conversely that the conjugacy class of a finite type vector is preserved by $\psi$, but that $\psi$ is pseudo-Anosov.  Then $\psi$ preserves the conjugacy class of a based homotopy class of curves in $\Sigma$.  Lift $\psi$ to $\Aut(\pi_1(\Sigma))$.  On the level of elements of $\pi_1(\Sigma)$, if $1\neq g\in \pi_1(\Sigma)$, we have $\ell(\psi^n_*(g))\sim K^n$, where $K$ is the pseudo-Anosov dilatation of $\psi$ and $\ell$ denotes the word length in $\pi_1(\Sigma)$ (see \cite{FLP}).  The idea behind those asymptotics is the fact that there is a natural metric $\ell_{\mu}$ on $\Sigma$ coming from the invariant foliations of $\psi$ if $\psi$ is pseudo-Anosov.  It turns out that this metric is equivalent to the hyperbolic metric $\ell_h$, or precisely that for any nontrivial class of curves $\gamma$, there exist positive constants $c$ and $C$ such that
\[
c\leq \frac{\ell_h(\gamma)}{\ell_{\mu}(\gamma)}\leq C.
\]
This means that the length of the shortest word representing $\psi^n(g)$ within its conjugacy class grows exponentially.  It follows in our case that $\psi$ cannot be pseudo-Anosov.
\end{proof}

We are now ready for the second version of the Nielsen-Thurston classification.  Let $\psi$ be a reducible mapping class which fixes the conjugacy class of an element $c$ in $\pi_1(\Sigma)$.  Then lifting $\psi$ to an automorphism of $\pi_1(\Sigma)$, we may assume that $\psi$ fixes $c$.

\begin{proof}[Proof of Theorem \ref{t:nt2}]
Note that if $\psi$ is inner then $\psi$ preserves all of the representations of the deck group on any cover.  It follows that the action of $\Mod^1(\Sigma)$ on the representations of the deck group of each cover descends to an action of $\Mod(\Sigma)$.  If the conjugacy class of $c\in \pi_1(\Sigma)$ is invariant under $\psi$, then we see from the proof of Theorem \ref{t:nt} that $\psi$ is reducible.  If $\psi(c)$ is conjugate to $c$, then for each character of the deck group $\gam$, we have that $\chi(\psi(c))=\chi(c)$.  Conversely, if $\psi$ does not preserve the conjugacy class of $c$ then this becomes visible on some cover of $\Sigma$ since $\pi_1(\Sigma)$ is conjugacy-separable.  In particular, there will be a finite characteristic quotient $\gam$ of $\pi_1(\Sigma)$ such that the images of $c$ and $\psi(c)$ are not conjugate.  Since the irreducible characters over $\bC$ form a basis for the class functions of $\gam$ and since every irreducible representation of $\gam$ occurs as a summand of $H_1(\Sigma_{\gam},\bC)$, we see that there is a character of $\chi$ such that $\chi(c)$ and $\chi(\psi(c))$ do not coincide.  The characterization of finite-order mapping classes is clear.
\end{proof}

\section{Free groups and detecting the classification of free group automorphisms}\label{s:free}
Analogously to the Nielsen-Thurston classification, there is a classification of free group automorphisms that can be described using the geometry of Outer space.  An (outer) automorphism $\phi$ of the free group on $n$ generators $F_n$ is called {\bf finite order} if it has finite order in $\Aut(F_n)$ ($\Out(F_n)$).  Recall that $\Out(F_n)$ and $\Aut(F_n)$ act on Outer and Auter space respectively, which are defined as simplicial complexes that parametrize isometry classes of graphs and isometry classes of graphs with basepoint respectively.  An automorphism or outer automorphism $\phi$ has finite order if and only if it fixes a point in Auter or Outer space, respectively.  The analogue of a reducible mapping class is a {\bf reducible} automorphism, which is defined as one which fixes a subgraph of some representative graph $\gam$ satisfying $\pi_1(\gam)=F_n$, with the requirement that the subgraph not be a forest.  An automorphism $\phi$ is called {\bf irreducible} if it is not reducible.  For an accessible introduction to the classification, see \cite{Be}.

By analogy to the construction of $H(\Sigma)$, we may take an exhausting inverse system $\mathcal{K}$ of finite $p$-power index subgroups of $F_n$, abelianize the kernels simultaneously, tensor with $\bC$ and take the inverse limit.  Let us call the resulting vector space $H(F_n)$.  We have:

\begin{cor}
The action of $Aut(F_n)$ on $H(F_n)$ is faithful.
\end{cor}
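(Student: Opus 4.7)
The plan is to adapt the proof of Theorem~\ref{t:nontrivial} to the free-group setting essentially verbatim. Every ingredient of that proof transfers: Lemma~\ref{l:cw} is stated uniformly for free and surface groups, $F_n$ is residually $p$ for every prime $p$, and $\Aut(F_n)$ is residually finite by Baumslag's lemma in Section~3. The work is therefore bookkeeping: produce a single finite $p$-cover on which $\psi$ acts nontrivially on homology, then pass to the inverse limit $H(F_n)$.

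First I would reduce to a single finite characteristic $p$-cover. Fix $1 \ne \psi \in \Aut(F_n)$ and choose $g \in F_n$ with $\psi(g) \ne g$. Since $F_n$ is residually $p$, there is a subgroup $L < F_n$ of $p$-power index not containing $\psi(g)g^{-1}$. Replacing $L$ by the intersection of its (finitely many) images under $\Aut(F_n)/\Aut_L(F_n)$, where $\Aut_L(F_n)$ stabilizes $L$, yields a characteristic subgroup $K$ of $p$-power index with $\psi(g)g^{-1} \notin K$. Setting $\gam = F_n/K$, the automorphism $\psi$ descends to a nontrivial automorphism of $\gam$, and we may assume $K$ sits in the exhausting system $\mathcal{K}$ by refining if necessary.

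Next I would transfer nontriviality to homology via Lemma~\ref{l:cw}. That lemma identifies the $\bC\gam$-module structure of $H_1(K,\bC)$ as $(n-1)$ copies of the regular representation plus a trivial summand. Pick an integral vector $d \in H_1(K,\bZ)$ whose $\bQ$-span is a copy of the regular representation of $\gam$. If $\psi$ were to fix every element of $H_1(K,\bC)$, then for every $\gamma \in \gam$ we would have
\[
\gamma \cdot d = \psi(\gamma \cdot d) = \psi(\gamma) \cdot d,
\]
and freeness of the regular representation would force $\psi(\gamma) = \gamma$ for all $\gamma \in \gam$, contradicting the previous paragraph. Hence $\psi$ acts nontrivially on $H_1(K,\bC)$.

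Finally I would upgrade from a single stage to the whole inverse limit $H(F_n)$. Since the transition maps in the pro-$\mathcal{K}$ system are $\Aut(F_n)$-equivariant, the action of $\psi$ on $H(F_n)$ is compatible with its action on the factor $H_1(K,\bC)$; an element of $H(F_n)$ whose component at $K$ is the vector $d$ chosen above is not fixed by $\psi$. I expect this last passage to be the main mild obstacle, because inverse limits need not surject onto their factors, so one must either work dually with the associated pro-$\mathcal{K}$ cohomology (a direct limit into which $H^1(K,\bC)$ injects, sidestepping the issue entirely) or choose a compatible lift of $d$ through a cofinal subfamily below $K$; both remedies are routine given that $\mathcal{K}$ is cofiltered and exhausting.
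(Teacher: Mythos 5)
Your proposal is correct and follows exactly the route the paper intends: the corollary is stated without proof precisely because Lemma~\ref{l:cw} is already formulated for free groups and the regular-representation argument from the proof of Theorem~\ref{t:nontrivial} transfers verbatim, which is what you carry out. Your extra care about characteristic $p$-power-index subgroups and about passing to the (co)homological limit only fills in details the paper leaves implicit.
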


The main result of this section is the following:

\begin{thm}\label{t:main}
The representation $H(F_n)$ with complex coefficients detects irreducible automorphisms and finite order automorphisms.
\end{thm}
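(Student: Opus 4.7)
The plan is to closely follow the proofs of Theorems \ref{t:nt} and \ref{t:nt2}, replacing the surface group $\pi_1(\Sigma)$ by $F_n$ throughout. The first step is to construct an analogue of the finite type vectors of Section \ref{s:encode} in $H(F_n)$. Fix a coherent system of coset representatives for the finite solvable (equivalently, finite $p$-group) quotients of $F_n$, and use the derived series filtration $\{D_i(F_n)\}$ to assign to each $g\in F_n$ a canonical expansion $g = \gamma\cdot c_1\cdots c_k$ with $c_i\in D_i(F_n)$ modulo $D_{i+1}(F_n)$. Since $F_n$ is residually $p$-by-torsion-free abelian for any prime $p$, the resulting map $\iota:F_n\to H(F_n)$ is injective by the argument of Lemma \ref{l:injective}, and the $F_n$-orbits of finite type vectors under conjugation are in bijection with conjugacy classes in $F_n$, compatibly with the action of $\Aut(F_n)$.

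For finite order automorphisms, $\phi$ has finite order in $\Out(F_n)$ iff some power $\phi^m$ is inner iff some power fixes every conjugacy class of $F_n$. Since free groups are conjugacy separable with respect to finite $p$-groups (cf.\ \cite{LySch}), this last condition is equivalent to $\phi^m$ fixing the $F_n$-orbit of every finite type vector in $H(F_n)$. Combined with the faithfulness corollary above, this gives a characterization of finite order purely in terms of $H(F_n)$.

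For irreducibility, the key equivalence needed is that $\phi\in\Out(F_n)$ is reducible iff some power of $\phi$ preserves the conjugacy class of a non-trivial element of $F_n$. The forward direction follows from the graph-theoretic definition: a preserved non-forest subgraph of some marked graph contains an embedded cycle, and the conjugacy class represented by that cycle is permuted by $\phi$, hence fixed by a power. The reverse direction proceeds by choosing any marked graph realizing a representative of the fixed conjugacy class as an embedded loop, which provides an invariant non-forest subgraph. Combined with the finite type vector machinery, $\phi$ is irreducible iff no power of $\phi$ fixes the $F_n$-orbit of any non-trivial finite type vector in $H(F_n)$, exactly as in the reducibility case of Theorem \ref{t:nt}.

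The main technical task is setting up the finite type vector construction for $F_n$ and verifying injectivity and compatibility with conjugation; this adaptation should actually be cleaner than the surface case, since there are no punctures and no relative cohomology issues to track, and the residual properties of $F_n$ are at least as strong as those of $\pi_1(\Sigma)$. The equivalence underlying the irreducibility characterization is a direct reformulation of the paper's graph-theoretic definition and does not require the deeper Bestvina-Handel train track machinery; what is genuinely used is only conjugacy separability of $F_n$ in $p$-groups and the faithfulness of $H(F_n)$ as an $\Aut(F_n)$-representation.
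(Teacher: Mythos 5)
Your treatment of finite order automorphisms is fine and matches the paper's in spirit (conjugacy separability in $p$-groups plus faithfulness forces a power to be inner). The gap is in the irreducibility part: the equivalence you rely on --- ``$\phi$ is reducible iff some power of $\phi$ preserves the conjugacy class of a nontrivial element of $F_n$'' --- is false in both directions. For the forward direction, your argument that a $\phi$-invariant non-forest subgraph yields a periodic conjugacy class does not work: $\phi$ maps the subgraph to itself, but it does not permute the finite set of embedded cycles; the image of an embedded cycle is in general a long immersed loop in the subgraph, not another embedded cycle. Indeed there exist reducible automorphisms with no periodic conjugacy classes at all (take $F_n = A * B$ with $\phi$ restricting to atoroidal fully irreducible automorphisms of the factors; such $\phi$ is reducible since it preserves the conjugacy class of the free factor $A$, yet is atoroidal). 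For the reverse direction, an irreducible --- even fully irreducible --- automorphism can fix a conjugacy class: the automorphism of $F_2$ induced by a pseudo-Anosov homeomorphism of the once-punctured torus fixes the conjugacy class of the commutator $[a,b]$. Your proposed construction of an invariant subgraph also breaks down here, since a conjugacy class can be realized as an embedded cycle in a marked graph only if it generates a proper free factor, which $[a,b]$ does not.

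The paper avoids this by invoking the Bestvina--Handel characterization (Lemma \ref{l:bh}): $\phi$ is reducible iff it cyclically permutes the conjugacy classes of the factors of a proper free factor system. Reducibility is thus a statement about conjugacy classes of \emph{free factors}, not of elements, and the homological detection is correspondingly different: for irreducible $\psi$ and any candidate splitting $A*B$ one finds $a\in A$ with $\psi(a)$ not conjugate into $A$, and then conjugacy separability together with the fact that every irreducible character of a finite characteristic $p$-group quotient appears in the homology of the corresponding cover shows that the character values of $\psi(a)$ disagree with those of every element of (the image of) $A$ on some cover. Your finite-type-vector setup (the analogue of Theorem \ref{t:nt}) is a reasonable alternative packaging of the data, but the statement it would detect --- periodicity of a conjugacy class of an element --- simply is not equivalent to reducibility for free group automorphisms, so the proof as proposed does not establish the theorem.
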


We will need to appeal to the following characterization of reducible automorphisms which can be found in \cite{BH}:

\begin{lemma}\label{l:bh}
Let $\phi\in Out(F_n)$.  Then $\phi$ is reducible if and only if there are free factors $F_{n_i}$, $1\leq i\leq k$, $n_1<n$, such that $F_{n_1}*\cdots *F_{n_k}$ is a free factor of $F_n$ and $\phi$ cyclically permutes the conjugacy classes of the $F_{n_i}$'s.
\end{lemma}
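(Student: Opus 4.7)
The plan is to prove both directions topologically, by passing between $\phi \in \Out(F_n)$ and its topological representatives on graphs $\gam$ with $\pi_1(\gam) \cong F_n$.

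\emph{Forward direction.} Suppose $\phi$ is reducible, so some topological representative $f \colon \gam \to \gam$ preserves a subgraph $\Gamma_0 \subset \gam$ which is not a forest. After subdividing I may assume $f$ is simplicial, so that $f$ permutes the components of $\Gamma_0$. Since $\Gamma_0$ is not a forest, at least one component $C$ has nontrivial fundamental group; setting $\Gamma_1 = \bigcup_{j \geq 0} f^j(C)$ yields a $\phi$-invariant subgraph whose components $C_1, \ldots, C_k$ are cyclically permuted by $f$ and each satisfy $\pi_1(C_i) \neq 1$. Pick a maximal tree in each $C_i$, take the union $T_1$, and extend to a maximal tree $T \subset \gam$. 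The non-tree edges of $\gam$ are in bijection with a free basis of $F_n$, and the non-tree edges inside $C_i$ form a free basis of a subgroup $F_{n_i}$. Because the generators of $F_{n_1} * \cdots * F_{n_k}$ form a subset of a free basis of $F_n$, their product is a free factor of $F_n$. The cyclic permutation of the $C_i$ by $f$ induces the cyclic permutation of the conjugacy classes $[F_{n_i}]$ by $\phi$, and properness of $\Gamma_1 \subset \gam$ yields $n_1 < n$.

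\emph{Backward direction.} Write $F_n = (F_{n_1} * \cdots * F_{n_k}) * B$ with $B$ a complementary free factor. For each $i$ let $R_i$ be a rose with $n_i$ petals based at a vertex $v_i$, and let $R_B$ be a rose realizing $B$ based at $v_1$. Connect $v_1, \ldots, v_k$ by a path $P$, and set $\gam = R_1 \cup \cdots \cup R_k \cup R_B \cup P$, so that $\pi_1(\gam, v_1) \cong F_n$ realizes the given decomposition via the connecting tree $P$. The subgraph $\Gamma_0 = R_1 \cup \cdots \cup R_k$ has nontrivial $\pi_1$ in every component and so is not a forest. To construct $f$, choose an $\Aut(F_n)$ lift $\Phi$ of $\phi$. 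The cyclic permutation hypothesis, together with the freedom to adjust $\Phi$ by an inner automorphism and to reconjugate the chosen representatives of $F_{n_i}$, lets me arrange that $\Phi$ literally sends (a chosen representative of) $F_{n_i}$ to $F_{n_{i+1 \bmod k}}$; I then realize this $\Phi$ by a simplicial map $f \colon \gam \to \gam$ which rotates the $R_i$'s cyclically along the connecting paths in $P$ and acts on $R_B$ accordingly. This $f$ is a topological representative of $\phi$ preserving the non-forest subgraph $\Gamma_0$, so $\phi$ is reducible.

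\emph{Principal obstacle.} The delicate step is the realization procedure in the backward direction: passing from the outer datum of a cyclic permutation of conjugacy classes to an honest graph self-map which literally preserves $\Gamma_0$. Concretely, one must choose a coherent system of base paths and inner-automorphism corrections so that the conjugation ambiguity inherent to $\Out(F_n)$ is absorbed uniformly across all $k$ factors simultaneously; this is a bookkeeping argument about normalizers of free factors (which in a free group coincide with the factors themselves when the factors are proper), but it is the only point where the distinction between $\Aut(F_n)$ and $\Out(F_n)$ must be handled carefully. Once $\Phi$ has been chosen coherently, the remaining graph-theoretic steps are routine.
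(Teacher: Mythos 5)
The paper gives no proof of this statement; it is quoted from Bestvina--Handel \cite{BH} (essentially their Lemma~1.16), so there is no paper-internal argument to compare with, and I evaluate your sketch on its own. Your forward direction is broadly sound modulo small repairs: the component-trajectory of $C$ under $f$ is only \emph{eventually} periodic, so $\Gamma_1=\bigcup_{j\ge 0}f^j(C)$ need not have its components cyclically permuted as stated --- one should pass to a genuine periodic cycle (or replace $f$ by a high power) first; also, the inequality $n_1<n$ implicitly uses the standing BH hypothesis that topological representatives live on graphs with no valence-one vertices.

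The backward direction, however, has a genuine gap, and it sits exactly at the spot you label ``bookkeeping about normalizers.'' You need to pass from the outer datum that $\phi$ cyclically permutes the conjugacy classes $[F_{n_i}]$ to an honest $\Phi\in\Aut(F_n)$ together with representatives $A'_i=h_iF_{n_i}h_i^{-1}$ satisfying $\Phi(A'_i)=A'_{i+1}$ literally. Two things block the naive recursion. First, reconjugating the representatives can destroy the free factor decomposition you need to build the cactus graph: already in $F_2=\langle a\rangle*\langle b\rangle$, the set $\{bab^{-1},\,aba^{-1}\}$ is Nielsen-reduced of total length $6$, so $\langle bab^{-1}\rangle*\langle aba^{-1}\rangle$ is a \emph{proper} rank-two subgroup of $F_2$ and hence not a free factor; thus the $h_i$ are constrained, not freely chosen. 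Second, writing $\Phi(F_{n_i})=g_iF_{n_{i+1}}g_i^{-1}$, the constraint $\Phi(A'_i)=A'_{i+1}$ together with $N(F_{n_{i+1}})=F_{n_{i+1}}$ forces $h_{i+1}\in\Phi(h_i)\,g_i\,F_{n_{i+1}}$, so $h_2,\dots,h_k$ are determined up to factor-coset from $h_1$ --- but the $k$-cycle must then close back to $h_1$, and it is not evident that a single inner correction to $\Phi$ absorbs the resulting discrepancy. These two issues are precisely what makes the Bestvina--Handel argument nontrivial; your sketch flags that they must be addressed but does not show how, so as written the realization step is an assertion rather than a proof.
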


Let $\psi$ be a reducible automorphism of $F$.  By Lemma \ref{l:bh}, we may assume that there is a free factor decomposition $A*B$ of $F$ such that the conjugacy class of $A$ is preserved by $\psi$.  If $a\in A$ is any particular element, we may lift $\psi$ to an automorphism of $F$ which sends $a$ to $A$.  If $\psi$ is irreducible, then for any candidate free decomposition of $F=A*B$, we can find an $a\in A$ such that $\psi(a)$ is not conjugate to an element of $A$.  The next lemma shows the finite index subgroups of $F_n$ detect the failure of a free splitting of $F_n$ to be preserved by an automorphism.

\begin{lemma}
Write $F_n=A*B$.  Suppose $x\in F_n$ is not conjugate to any element of $A$.  Then for every $a\in A$ there is a finite quotient of $F_n$ such that image of $x$ is not conjugate to $a$.  We may assume that this quotient is a $p$-group for any prime $p$.
\end{lemma}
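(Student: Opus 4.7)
The plan is to reduce the statement directly to the conjugacy separability of $F_n$ with respect to finite $p$-groups, which was already recorded earlier in the excerpt (the lemma attributed to \cite{LySch} stating that free and surface groups are conjugacy separable with respect to finite $p$-groups). The hypothesis that $x$ is not conjugate in $F_n$ to any element of $A$ is uniform over $A$, but the conclusion only asks, for each fixed $a \in A$ separately, for a single finite quotient of $F_n$ (possibly depending on $a$) in which the images of $x$ and $a$ fail to be conjugate.

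Concretely, I would fix an arbitrary $a \in A$ and a prime $p$, and read off from the hypothesis the weaker statement that $x$ and $a$ are non-conjugate elements of $F_n$. Conjugacy $p$-separability of $F_n$ then produces a finite $p$-group quotient $F_n \to Q$ whose projection sends $x$ and $a$ to non-conjugate elements of $Q$. Since $p$ was arbitrary and $a \in A$ was arbitrary, this gives exactly the conclusion, including the clause that we may assume the quotient is a $p$-group for any chosen prime.

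The main (and really only) conceptual point to check is that the lemma cited earlier is being applied correctly: we are not asking for a single quotient that simultaneously separates $x$ from every $a \in A$ (which would be a much stronger statement, since $A$ is typically infinite), but rather a quotient depending on the choice of $a$. With that reading, there is no genuine obstacle; the lemma is a packaging of conjugacy separability in the form needed for the free-group Nielsen–Thurston style argument that follows, where it will be applied to the specific element $a$ for which one wants to detect $\psi(a)$ leaving the conjugacy class of the free factor $A$.
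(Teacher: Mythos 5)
Your proof is correct and follows exactly the paper's own argument: fix $a$, observe that $x$ and $a$ are non-conjugate in $F_n$ by hypothesis, and invoke conjugacy separability of free groups with respect to finite $p$-groups to produce the desired quotient. Your remark that the quotient is allowed to depend on $a$ is also the correct reading of the statement.
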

\begin{proof}
This follows from the conjugacy separability of the free group (see \cite{LySch}).  The free group is conjugacy separable with respect to the class of finite $p$-groups, whence the second claim.
\end{proof}

\begin{proof}[Proof of Theorem \ref{t:main}]
Let $\psi$ be irreducible and $A*B$ any candidate splitting.  Lift $\psi$ to $\Aut(F_n)$.  Let $F'<F_n$ be any characteristic subgroup with deck group $\gam$.  As before (cf. Lemma \ref{l:cw}), we have that $\gam$ acts on $H_1(F',\bQ)$, and that each irreducible representation of $\gam$ occurs as a direct summand.  Fix $a\in A$ such that $\psi(a)$ is not conjugate to any element of $A$.  For each $a'\in A$, there is a characteristic $p$-group quotient $\gam$ of $F_n$ and an irreducible character $\chi$ of $\gam$ such that $\chi(\psi(a))$ and $\chi(a')$ do not coincide (we are appealing strongly to the fact that we are doing representation theory over $\bC$).  It follows that the representations of the deck groups of all finite $p$-group quotients of $F_n$ on the complex homology of finite index covers witnesses the fact that $\psi$ does not preserve $A*B$.

If $\psi$ has finite order, then replacing $\psi$ by a power and lifting to $\Aut(F_n)$ shows that for each finite quotient $\gam$ of $F_n$, each $x\in \gam$, and each irreducible character $\chi$ of $\gam$, $\chi(\psi(x))=\chi(x)$.  It follows that $\psi$ must be inner.
\end{proof}

\section{Using representations to approximate $K_{\psi}$}
Though we have obtained a representation-theoretic characterization of the Nielsen-Thurston classification and a faithful representation of the mapping class group, the objects in this paper are difficult to work with.  Understanding the finite nilpotent covers of a thrice-punctured sphere is no easier than understanding all two-generated finite nilpotent groups.  Therefore, even the simplest examples present a lot of difficulty in the setup we consider here.

We can say a few things, though.  For instance, consider the braid groups $B_n$, identified with the the mapping class groups of $n$-times punctured disks.  There is a natural homological representation to consider here: the Burau representation.  Indeed, we have a homomorphism from $F_n\to\bZ$ that takes a word in a fixed standard generating set for $F_n$ to its exponent sum.  This homomorphism gives rise to a covering space $X$ called the {\bf Burau cover}, and by taking values in $\bZ/m\bZ$ we get the {\bf Burau cover modulo $m$}.  The modulo $m$ Burau covers together form the finite Burau covers.  These obviously correspond to covers where small loops about the punctures are all unwound the same number (namely $m$) of times.  The braid group acts on $F_n$ preserving this homomorphism.  We thus get a representation of the braid group on the covering corresponding to the kernel of this homomorphism, which is the classical Burau representation, $V_n$.  It is well known that $V_3$ is faithful and that $V_n$ is not faithful for $n\geq 5$ (see \cite{B2} and the references therein).  The moment $V_n$ is not faithful, there are pseudo-Anosov mapping classes whose nontriviality is not detected by the Burau representation (cf. \cite{I}), much less their dilatations.  Conversely, a representation that contains no pseudo-Anosov classes in its kernel is faithful.  Since the kernel and image of the homomorphism are $B_n$ invariant, we see that if $X$ is the cover of $\Sigma$ corresponding to the kernel of the homomorphism, $B_n$ acts on $H_1(X,\bQ)$ by $\bZ[t^{\pm1}]$--linear maps.  We can set the parameter to be a root of unity and thus obtain a representation of the braid group on the homology of the multiply punctured disk with twisted coefficients.  Such homology groups arise naturally from covering spaces.  Letting $t$ be a primitive $n^{th}$ root of unity gives rise to the twisted homology given by the Burau cover modulo $n$.  It is easy to see the following proposition, whose phrasing was first suggested to the author by McMullen:

\begin{prop}[cf. \cite{BB}]\label{t:bsup}
Let $\psi\in B_n$ be a pseudo-Anosov braid and $S^1$ denote the unit complex numbers.  Then $\sup_{t\in S^1}\rho(V_n(\psi))\leq K$, and the supremum represents finite cyclic covers of $\Sigma=\Sigma_{0,n,1}$ that have equal branching over each of the punctures.
\end{prop}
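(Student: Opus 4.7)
The plan is to reduce the statement to Theorem \ref{t:bound} applied to the finite Burau covers. First I would fix a primitive $m$-th root of unity $\zeta$, and let $X_m\to\Sigma$ denote the Burau cover modulo $m$, i.e.\ the degree $m$ cyclic cover whose deck group $\bZ/m\bZ$ unwraps each small loop about a puncture the same number of times. Since $\psi$ preserves the defining homomorphism $F_n\to\bZ$, it lifts to $X_m$ and commutes with the deck transformations, so $H_1(X_m,\bC)$ decomposes as a $\psi$-invariant direct sum $\bigoplus_{\chi} H_1(X_m,\bC)_{\chi}$ of character eigenspaces for $\bZ/m\bZ$. The standard identification of character eigenspaces with specializations of the Alexander module then identifies the restriction of $\psi$ to $H_1(X_m,\bC)_{\zeta}$ with the evaluation $V_n(\psi)(\zeta)$ of the Burau representation at $t=\zeta$, up to a bounded contribution coming from the trivial character.

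Next I would apply Theorem \ref{t:bound}. Since $\psi$ is pseudo-Anosov with dilatation $K$, it admits a $K$-Lipschitz representative in its isotopy class with respect to the singular Euclidean metric coming from its invariant transverse measured foliations. Pulling this metric back to $X_m$, the lift of $\psi$ is again $K$-Lipschitz, so Theorem \ref{t:bound} in degree one yields that the spectral radius of $\psi$ on $H_1(X_m,\bR)$ is at most $K$. In particular the spectral radius on each $\psi$-invariant summand is bounded by $K$, which gives $\rho(V_n(\psi)(\zeta))\leq K$ for every root of unity $\zeta\in S^1$.

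Finally, I would extend the bound to arbitrary $t\in S^1$ by continuity. The entries of $V_n(\psi)(t)$ are Laurent polynomials in $t$, so the coefficients of the characteristic polynomial of $V_n(\psi)(t)$ depend continuously on $t\in S^1$, and hence so do its roots; in particular $t\mapsto\rho(V_n(\psi)(t))$ is continuous. Since the roots of unity are dense in $S^1$, the bound extends to all of $S^1$. The identification of the supremum with finite cyclic covers of $\Sigma_{0,n,1}$ having equal branching over each puncture is then automatic: specializing at an $m$-th root of unity is precisely the homological content of the Burau cover modulo $m$, which by construction has the same branching index $m$ at each puncture.

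The main obstacle is the first step, namely verifying that the specialization of the unreduced Burau module at $t=\zeta$ really accounts for the $\zeta$-character eigenspace of $H_1(X_m,\bC)$ without losing or introducing spectral information through Tor terms. This is a standard but delicate application of the Wang/Milnor long exact sequence relating $X_m$ to the infinite cyclic Burau cover $X$, and one must be careful with the distinction between the reduced and unreduced Burau representations; but once this identification is pinned down, the spectral-radius inequality follows uniformly across $m$ and the rest of the argument is a direct application of Theorem \ref{t:bound} together with continuity of the spectral radius.
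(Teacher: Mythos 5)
Your proposal follows essentially the same route as the paper: the inequality is deduced from Theorem \ref{t:bound} applied to a $K$--Lipschitz (singular flat) representative of $\psi$ lifted to the finite Burau covers, whose character eigenspaces realize the Burau specializations at roots of unity, with the extension to all of $S^1$ by density and continuity of the spectral radius. Two minor points where you diverge from the paper: since the Burau covers are non-compact, Theorem \ref{t:bound} does not apply verbatim, and the paper explicitly handles this by truncating $\eps$--neighborhoods of the punctures and letting $\eps\to 0$, a step you should include; and the paper regards the second claim about equal-branching cyclic covers as requiring a genuine argument (deferring to \cite{BB}) rather than being automatic.
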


The first claim in the proposition follows from general principles about Lipschitz maps acting on metric manifolds which we stated above as Theorem \ref{t:bound} together with the observation that there is a metric for which a pseudo-Anosov map with dilatation $K$ is $K$--Lipschitz.  Though multiply punctured surfaces are not compact, there are compact manifolds with boundary which are deformation retracts of multiply punctured surfaces, and thus Theorem \ref{t:bound} holds for surfaces with punctures by applying an appropriate limiting process.  Precisely, we can remove $\eps/2$--neighborhoods of the punctures to get a compact manifold with boundary, and we can find a new homeomorphism on the truncated surface which coincides with a $K$--Lipschitz homeomorphism $f$ outside of an $\eps$-neighborhood of the punctures.  We then let $\eps\to 0$.  It will be clear from the argument below that even considering surfaces with punctures will not introduce significant technical obstacles.

\begin{proof}[Proof of Theorem \ref{t:bound}]
The proof proceeds by producing a bound for simplicial chains, and estimating the growth of the action of the homeomorphism $f^n$ on the vector space of real simplicial chains.  We can then estimate the spectral radius $\rho$ of the action of $f$ on the chains and then deduce an estimate of the spectral radius of the induced action on the real homology.

Choose a simplicial decomposition of $X$ with a very fine simplices, so that any point in $X$ is $\ll1/K$ from the barycenter of a simplex.  Then, if $\gamma$ is a loop in $X$, we will be able to homotope it to a path lying in the $1$-skeleton of $X$ without increasing the length by more than some constant factor $C$ that will work for all of $X$.  Choose the subdivision also so that all the $1$--simplices have approximately the same size.  This can be done as follows: consider the length spectrum of all $1$--simplices for some decomposition.  This is a finite set since $X$ is compact.  Consider any two positive real numbers $s,t$.  For any $\eps>0$, there exist integers $m,n$ such that $|s/m-t/n|<\eps$.  By an easy induction, for any $\eps$ and any finite collection of positive real numbers, we can find a sequence of integers such that the quotients differ pairwise by no more than $\eps$.

Therefore, given any $\eps$, we can subdivide the $1$-skeleton of our simplicial complex so that the lengths of any two $1$-simplices differ by no more than $\eps$.  Now let $z$ be a $1$--cycle.  Writing $z$ as a weighted union of $1$-simplices, we may talk about the length $\ell$ of $z$.  Let $f_*$ denote the linear map on real simplicial chains induced by $f$, and consider $f^n_*(z)$ and $f^n(z)$.  Since $f$ is $K$--Lipschitz, the length of $f^n(z)$ is no more than $K^n\cdot\ell$.  On the other hand we can homotope $f^n(z)$ to the $1$--skeleton, thus increasing its length to no more than $C\cdot K^n\cdot\ell$.
Choosing $\eps$ small enough, we see that if $z$ required $m$ $1$--simplices to be expressed as a $1$-chain, $f^n_*(z)$ requires no more than $C/(1-\eps)\cdot K^n\cdot m$ simplices.

Let $G$ be a finitely generated group, and $g:G\to H$ a surjective homomorphism.  There is a well-defined length function on $G$ given by the graph metric on a fixed Cayley graph for $G$, and it induces a length function on $H$.  Furthermore, it is clear that the induced length function is bounded by the length function on $\ell_G$, i.e. $\ell_G(\gamma)\geq\ell_H(g(\gamma))$ for all $\gamma\in G$.  Since homology is a quotient of a subgroup of the $1$--chains, we obtain $\rho_1(f_*)\leq K$.

The proof in general is analogous.  The metric gives us a way to measure the volume of simplices: the volume of a very small $m$-cube is given by the $m^{th}$ power of the side lengths.  Therefore, $f$ will scale the volume element by no more than $K^m$.  As before, we can cut up $m$-simplices so that their volumes are all similar.  So, if $z$ is an $m$-cycle, we can homotope $f^n(z)$ to sit in the $m$-skeleton of $X$, increasing its volume by no more than a factor of some constant $C$.  The constant $C$ can be estimated as follows: if an $m$-chain $c$ intersects the interior of an $m'$-simplex $S$ with $m'>m$, then we perform a homotopy $rel\,\partial S$ to push $c$ to the $m'-1$-skeleton $S\cap X_{m'-1}$ and proceed inductively.  Subdividing the interior of $S$ if necessary, we can assume the homotopy does not change the volume of $c\cap S$ by much.  The subdivision of the $m$-skeleton into simplices of similar size can be done afterwards without altering the validity of the proof.  This proves the claim.
\end{proof}
The second claim in proposition \ref{t:bsup}, though intuitively clear, needs some argument, and a complete proof can be found in \cite{BB}.

Note that there is a two-sheeted covering of the thrice punctured disk that gives a thrice punctured torus with one boundary component.  Furthermore, any simple pole of any quadratic differential is resolved, so that we get a quadratic differential on a once-punctured torus.  To see that poles of quadratic differentials can generally be resolved to be regular points on a finite cover we have the following result:

\begin{prop}\label{l:monogon}
Suppose that $\psi$ stabilizes a Teichm\"uller geodesic determined by a quadratic differential $q=q(z)\, dz^2$ which has only simple poles and even order zeros.  Then there is a finite unbranched cover $\Sigma'$ of $\Sigma$ such that the lift of $q$ has only even order zeros.  In particular, there exists a finite unbranched cover $\Sigma'\to\Sigma$ such that for any lift of $\psi$ to $\Sigma'$, the homological and geometric dilatations of $\psi$ coincide.
\end{prop}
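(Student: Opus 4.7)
The plan is to construct $\Sigma'$ as a characteristic mod-$2$ cover of $\Sigma$ on which the lift of $q$ becomes holomorphic, then pass to one further characteristic cover $\Sigma''\to\Sigma'$ on which the lift of $q$ admits a global square root, and finally read off the dilatation $K$ as an eigenvalue of the action of $\psi$ on $H^1(\Sigma'',\bR)$.

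I would begin by observing that the number $r$ of simple poles of $q$ must always be even: viewing $q$ as a meromorphic section of $K_\Sigma^{\otimes 2}$, the identity $N-r=4g-4$, where $N$ is the total order of the zeros, forces $r\equiv N\equiv 0\pmod 2$ by hypothesis. If $r=0$ I take $\Sigma'=\Sigma$ and there is nothing to do. Otherwise $\Sigma$ has $n\geq 2$ punctures, and I take $\Sigma'\to\Sigma$ to be the characteristic cover corresponding to $\ker(\pi_1(\Sigma)\to H_1(\Sigma,\bZ/2\bZ))$. The peripheral generators satisfy the single relation $\sum m_i=0$, so for $n\geq 2$ every meridian has nonzero image in $H_1(\Sigma,\bZ/2\bZ)$, and each puncture of $\Sigma$ acquires ramification index $2$ in $\Sigma'$. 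A local coordinate computation with $z=w^2$ then confirms that the pullback of $c\,dz^2/z$ is $4c\,dw^2+\cdots$ (regular), the pullback of a zero of order $2k$ at a puncture has order $4k+2$ (even), and interior zeros are preserved since the cover is unramified there. Hence the lift $q'$ is a holomorphic quadratic differential on $\Sigma'$ with only even order zeros, proving the first half of the proposition; $\psi$ lifts automatically because $\Sigma'$ is characteristic.

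For the second statement I would promote $q'$ to the square of an abelian differential via one more cover. Since $q'$ has only even order zeros, $\sqrt{q'}$ is locally a holomorphic $1$-form defined up to sign; the sign ambiguity defines a class in $H^1(\Sigma',\bZ/2\bZ)$ whose kernel $\bZ/2\bZ$-cover trivializes the monodromy. I would take $\Sigma''\to\Sigma'$ to be the characteristic cover corresponding to $\ker(\pi_1(\Sigma')\to H_1(\Sigma',\bZ/2\bZ))$, which factors through every $\bZ/2\bZ$-cover of $\Sigma'$ and therefore yields a global holomorphic $1$-form $\omega$ with $\omega^2=\pi^*q'$. A composition of characteristic subgroups is characteristic in $\pi_1(\Sigma)$, so $\psi$ lifts to $\Sigma''$.

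Finally, the flat structure on $\Sigma''$ determined by $\omega$ is preserved by (the lift of) $\psi$, which acts affinely with linear part conjugate to $\mathrm{diag}(K,K^{-1})$. The cohomology classes $[\mathrm{Re}\,\omega]$ and $[\mathrm{Im}\,\omega]$ in $H^1(\Sigma'',\bR)$ are linearly independent, since $\omega\wedge\bar\omega\neq 0$ as a top form, and together transform under $\psi^*$ via this matrix, yielding eigenvalues $K$ and $K^{-1}$. Hence $K_H(\Sigma'')\geq K$, and Theorem \ref{t:bound} supplies the reverse inequality, so the homological and geometric dilatations of any lift of $\psi$ to $\Sigma''$ coincide. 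The main obstacle I foresee is coordinating $\psi$-invariance of the two covers with the required ramification at the poles and the existence of the global square root, which is precisely what the two successive mod-$2$ characteristic covers are designed to handle in tandem.
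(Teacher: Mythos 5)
Your proof is correct and follows essentially the same strategy as the paper: successive mod-$2$ covers, first to unwind the simple poles at the punctures into regular points and even-order zeros, then to kill the sign monodromy of $\sqrt{q}$ and realize $K$ as an eigenvalue on $H^1$. Your observation that the degree formula $N-r=4g-4$ forces the number of simple poles to be even is a nice streamlining that lets the single characteristic mod-$2$ homology cover do the job in one step, avoiding the parity case analysis and the "repeat the construction" step in the paper's argument; you also make explicit the final eigenvalue computation with $[\mathrm{Re}\,\omega]$ and $[\mathrm{Im}\,\omega]$ that the paper leaves implicit.
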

\begin{proof}
Let $P$ be the set of punctures of $\Sigma$.  At each point in $P$ we may assume the stable foliation $\mathcal{F}$ determined by $q$ has either an even-pronged singularity or a one-pronged singularity.  In the latter case, we may simply fill in the missing point since the quadratic differential associated to $\mathcal{F}$ has a zero at that point (so that the puncture is a removable singularity).  Therefore we may assume that all the points in $P$ are poles of the associated quadratic differential.  By passing to a characteristic cover of $\Sigma$, we may assume $|P|$ is even.  Such a cover might be given by taking the cover associated to $H_1(\Sigma,\bZ/2\bZ)$ if $|P|=1$.  If $|P|>1$, label the punctures $p_1,\ldots,p_n$.  Take a small loop about each puncture and record its homology class.  Send the homology classes of the loops about $p_1,\ldots,p_{n-1}$ to $1\in\bZ/2\bZ$.  Since the sum of the the homology classes of these loops must be zero, if $n$ is even we may send the homology class of the loop about $p_n$ to $1\in\bZ/2\bZ$.  Otherwise, we are forced to send it to $0\in\bZ/2\bZ$.  Since the first cover is characteristic and since punctures of $\Sigma$ are $\psi$-invariant, $\psi$ lifts to the covers determined by these homomorphisms.  After taking this cover, it is obvious that all but possibly one puncture in $P$ will lift to a regular point of the foliation.  Since the cover has even degree, the one puncture over which the cover did not ``branch" lifts to an even number of punctures.  Repeating the second cover construction, we construct a further unbranched cover (which can clearly be refined to a $\psi$-invariant or even characteristic cover, since these punctures are poles of the quadratic differential associated to $\mathcal{F}$) where a small loop about each puncture is unwound to half a loop about one of the punctures in the cover.

Once we have unwound each puncture as above, each pole of the quadratic differential becomes a regular point and hence can be filled in as a removable singularity.  It follows that after passing to another finite cover if necessary, $q$ becomes the global square of a holomorphic $1$-form $\omega$, whence the claim.
\end{proof}

We may view $B_3/Z(B_3)$ (where the center is generated by a twist about the boundary of the disk) as a subgroup of the mapping class group of the once-punctured torus.  The homological representation theory of this group is well-understood, especially in connection to the Nielsen-Thurston classification.  The situation is already much more complicated for the four-times punctured disk.  Let $\{\sigma_1,\ldots,\sigma_{n-1}\}$ denote the standard generators of the braid group $B_n$.  Then $\beta=\sigma_1\sigma_2\sigma_3^{-1}\in B_4$ is pseudo-Anosov (see \cite{HK} for a large class of examples in this same flavor.)  According to Hironaka and Kin the dilatation of $\beta$ is the largest root of the polynomial
\[
1-t-2t^2-2t^3-t^4+t^5,
\]
which is approximately $2.29663$.  Applying the machinery of proposition \ref{t:bsup}, we get that the Burau matrix $V_4(\beta)$ is
\[
M=
\begin{pmatrix}
-t&-t^2&-t^2\\1&0&0\\0&1&1-1/t
\end{pmatrix}.
\]
The characteristic polynomial of $M$ is \[t+u-tu+t^2u-u^2+u^2/t+tu^2+u^3.\]  The supremum of the spectral radii of $M=M(t)$ as $t$ varies over $S^1$ is approximately $2.17401$.  By choosing a small mesh size, the estimate will be close to the actual value.  It is possible to show that the inequality between the supremum of the spectral radii of $M(t)$ and $K$ is strict in this case by showing that $2.17401$ is already very close to the actual Burau supremum.  It is a theorem of Band and Boyland in \cite{BB} that the spectral radius of a Burau matrix specialized at a root of unity is either equal to the dilatation when $t=-1$ or is strictly smaller than the dilatation.  Our computation shows that the difference between the spectral radii and the dilatation can be bounded away from zero.  The question of whether $K$ is achieved if the covers are allowed to vary over all covers of $\Sigma$ is a consequence of \cite{Mc}.

The strict inequality of Proposition \ref{t:bsup} does not change if we pass to the Lawrence-Krammer representation, which is a well-known faithful representation of the braid group.  For more detail, see for instance \cite{B}.
Recall that the configuration space of pairs of points in a space $X$ is the set \[(X\times X\setminus\Delta)/(\bZ/2\bZ)),\] where the group action permutes the coordinates.  In the case of a disk punctured at $p_1,\ldots,p_n$, the configuration space $C$ is a $4$-manifold and inherits a natural action of the braid group.  The representation itself is the action of $B_n$ on the second (usual) homology of a certain $\bZ^2$-cover of $C$, viewed as a $\bZ[t^{\pm 1},q^{\pm 1}]$--module.

We now give a verbatim account of the setup in which Bigelow works in \cite{B}.  If $\al$ is a path in $C$, we may view $\al$ as $\{\al_1,\al_2\}$, where we mean unordered pairs of points.  Let
\[
a=\frac{1}{2\pi i}\sum_{j=1}^n\left(\int_{\al_1}\frac{dz}{z-p_j}+\int_{\al_2}\frac{dz}{z-p_j}\right)
\]
and
\[
b=\frac{1}{\pi i}\int_{\al_1-\al_2}\frac{dz}{z}.
\]
These quantities are $B_n$-invariant, so that $B_n$ acts on $H_2(Z,\bZ)$ as a $\bZ[t^{\pm 1},q^{\pm 1}]$--module, where $Z$ is the covering space corresponding to the map $\al\mapsto q^at^b$.

Bigelow provides explicit matrices for the corresponding representation, which allows for relatively simple computation of the supremum of the homological dilatations over finite intermediate covers between $C$ and $Z$ (the proof of this is again analogous to Proposition \ref{t:bsup}).  For the Hironaka-Kin example, we obtain a $6\times 6$ matrix which we do not reproduce here.  Once we obtain the supremum, we must take the square root, since the action is on second homology.  Theorem \ref{t:bound} applies for the Lawrence-Krammer representation, since the action of a homeomorphism on the covering space is inherited from the action on the coordinates.  If the action comes from a pseudo-Anosov homeomorphism with dilatation $K$, those actions can be taken to be $K$--Lipschitz.  Theorem \ref{t:bound} guarantees that this estimate will not exceed the dilatation.  We obtain the supremum $2.17433<2.29663$.

\section{Questions and examples}\label{s:examples}
A basic question which the preceding discussion leaves open is the following:

\begin{quest}
Let $\psi$ be an infinite order mapping class.  Does $\psi$ act with infinite order on the (co)homology of some finite cover?  If $\psi$ has positive entropy, can $\psi$ be made to act with spectral radius greater than one?
\end{quest}

We can formulate analogous questions to this one and Question \ref{q:homdil} for outer automorphisms of free groups.

The first part of this question is really only open for an automorphisms of surfaces which are generalized pseudo-Anosov, in the following sense.  Given $\psi\in\Mod(\Sigma)$, find a canonical reduction system $\mathcal{C}$ for $\psi$.  Then on every component of $\Sigma\setminus\mathcal{C}$, $\psi$ either acts trivially (up to isotopy) or as a pseudo-Anosov homeomorphism, possibly after passing to a power of $\psi$.  If there is a $\mathcal{C}'\subset \mathcal{C}$ such that $\psi$ does a combination of Dehn twists about $\mathcal{C}'$, then we have already shown that $\psi$ will act with infinite order on the homology of a finite cover.  By a Dehn twist about $c\in\mathcal{C}'$, we mean that in $\Sigma\setminus\mathcal{C}$, $c$ forms two boundary components in $\Sigma\setminus\mathcal{C}$, and we require that the corresponding components of $\Sigma\setminus\mathcal{C}$ have trivial restriction of $\psi$.  Here we are using that $\mathcal{C}$ is indeed a canonical reduction system.
Note that $\psi$ has to have infinite order in $\Mod(\Sigma)$, as no inner automorphism can act with infinite order on the homology of a finite cover.

If the answer to this question or the analogous one for free groups is true, then $\psi$ will act with infinite order on $V_{\chi}^{2g-2}\subset H^1(\Sigma_{\gam},\bQ)$ (resp. on the homology of some finite index subgroup of $F_n$ with quotient $\gam$), where $V_{\chi}$ is an irreducible representation of $\gam$ and $V_{\chi}^{2g-2}$ is its isotypic component.  A rather dramatic example of this fact is the following observation:

Recall that $\mathcal{I}(\Sigma)$ admits the Johnson filtration.  It can be easily shown that any element of $J_1$ acts trivially on $\gamma_k(G)/\gamma_{k+1}(G)$ for all $k$ (cf. \cite{BL}).  Let $\psi\in J_k\setminus J_{k+1}$.  It follows that there is an element $g\in G$ and $z\in\gamma_k(G)/\gamma_{k+1}(G)$ such that $\psi(g)=gz$ in $G/\gamma_{k+1}(G)$.  For all $n\neq 0$, we have $\psi(g^n)=g^nz^n$ in that quotient.

Now let $\Sigma'$ be a finite cover of $\Sigma$ where $z$ lifts to a nontrivial homology class and let $n$ be such that $g^n$ lifts to a homology class.  Consider $d=g^n$ as a homology class in $\Sigma'$, and suppose that $\psi(g)=g\cdot z$.  It follows that $\psi(d)$ is equal to $d+ z'$, where $z'$ is the sum of the conjugates of $z$ by all powers of $g$, where $g$ is viewed as a deck transformation of the covering.  The representation of the subgroup $\langle g\rangle$ where the homology class of $z$ is located can be assumed to be a trivial representation if and only if there is a finite quotient $\gam'$ of $G$ such that both $g$ and $z$ are in the kernel of the quotient map, and if both $g$ and $z$ are nontrivial in the abelianization of the kernel.

If the conjugation action is nontrivial, then the sum of the conjugates of $z$ is zero.  Indeed, tensoring with $\bC$ shows that $g^k$ acts by $\zeta_n^k$, where $\zeta_n$ is an $n^{th}$ root of unity, whence the claim.

Finally, one would like to produce a more useful homological version of the Nielsen-Thurston classification.  Recall that there is the so-called Casson-Bleiler criterion for a map to be pseudo-Anosov (see \cite{CB}).  We have seen that a na\"ive generalization of the Casson-Bleiler criterion cannot hold by Theorem \ref{t:reducible}.

\begin{quest}
Is there a characterization of pseudo-Anosov maps which is homologically visible on some finite cover?  Some $p$-power cover?
\end{quest}

\section{Some remarks on the uniqueness of finite type vectors in $H(\Sigma)$}
Though it is disappointing that finite type vectors in $H(\Sigma)$ do not form a subrepresentation, we will now show that they are about as good a construction as one can expect.  If we want to encode homotopy classes of curves as vectors in $H(\Sigma)$, there are certain natural conditions we should put on the image of each homotopy class.  Imposing just a few relatively mild assumptions greatly restricts the possible ways to encode $\pi_1(\Sigma)$.

\begin{prop}
Let $\iota:\pi_1(\Sigma)\to H(\Sigma)$ be an injective map.  Suppose that $\iota$ satisfies the following two conditions:
\begin{enumerate}
\item
Let $\Sigma'\to \Sigma$ be one of the finite covers occurring in the construction of $H(\Sigma)$, and let $\gam$ be the deck group of the cover.  If $g\in\pi_1(\Sigma)$ is in the kernel of $\pi_1(\Sigma)\to\gam$, the entry of $\iota(g)$ corresponding to $\Sigma'$ is the image of $g$ in $H_1(\Sigma',\bZ)$.
\item
Let $\Sigma'\to\Sigma$ be a finite solvable cover.  If $g_1,g_2\in \pi_1(\Sigma')$ are homologous, then $\iota(g_1)-\iota(g_2)=\iota(g_2^{-1}g_1)$ in the entry corresponding to $\Sigma'$.
\end{enumerate}
Then $\iota$ coincides with the $\iota$ defined in Section \ref{s:encode}, which is well-defined up to some choices of coset representatives.
\end{prop}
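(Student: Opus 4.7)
The plan is to analyze, one cover at a time, the component $\phi_{\Sigma'}\colon\pi_1(\Sigma)\to H_1(\Sigma',\bZ)$ obtained by reading off the $\Sigma'$-coordinate of $\iota$, and to show that hypotheses (1) and (2) rigidify $\phi_{\Sigma'}$ enough that, after a compatible choice of coset representatives for $\pi_1(\Sigma')$ in $\pi_1(\Sigma)$, the map $\phi_{\Sigma'}$ agrees with the formula of Section~\ref{s:encode}.

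Hypothesis (1) immediately identifies $\phi_{\Sigma'}$ restricted to $\pi_1(\Sigma')$ with the abelianization map $\pi_1(\Sigma')\to H_1(\Sigma',\bZ)$. The main leverage then comes from hypothesis (2): whenever $g_1,g_2\in\pi_1(\Sigma)$ represent the same coset of $\pi_1(\Sigma')$, so that $g_2^{-1}g_1\in\pi_1(\Sigma')$, one extracts that $\phi_{\Sigma'}(g_1)-\phi_{\Sigma'}(g_2)=\phi_{\Sigma'}(g_2^{-1}g_1)=[g_2^{-1}g_1]$. In other words $\phi_{\Sigma'}$ is affine along right cosets, with slope governed by the abelianization of $\pi_1(\Sigma')$, so $\phi_{\Sigma'}$ is entirely determined on a coset by its value on any single representative.

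Within each coset one may then select a distinguished representative $t$ with $\phi_{\Sigma'}(t)=0$: since the abelianization $\pi_1(\Sigma')\to H_1(\Sigma',\bZ)$ is surjective, translating an initial representative by a suitable $c\in\pi_1(\Sigma')$ absorbs the vector $\phi_{\Sigma'}(t)$, and such $t$ is unique modulo $[\pi_1(\Sigma'),\pi_1(\Sigma')]$. With these choices in place, writing a general element as $g=tc$ yields $\phi_{\Sigma'}(g)=[c]$, which is exactly the Section~\ref{s:encode} formula at the $\Sigma'$-entry.

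The remaining and delicate point is synchronizing these choices into a single compatible set of coset representatives across the whole inverse system of covers of $\Sigma$. For a tower $\Sigma''\to\Sigma'\to\Sigma$, the fact that $\iota(g)$ is a coherent tuple in $H(\Sigma)=\varprojlim H_1(\Sigma',\bZ)$ means that the covering-map-induced homomorphism $H_1(\Sigma'',\bZ)\to H_1(\Sigma',\bZ)$ sends $\phi_{\Sigma''}(g)$ to $\phi_{\Sigma'}(g)$; in particular, any representative $t$ with $\phi_{\Sigma''}(t)=0$ automatically satisfies $\phi_{\Sigma'}(t)=0$. I would therefore construct the compatible system of representatives by induction along the derived-length filtration of a cofinal family of finite solvable characteristic covers, choosing representatives at each new level to both kill $\phi_{\Sigma''}$ and refine the choice already fixed at shallower levels. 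The main obstacle is verifying that this inductive normalization can always be carried out without running into contradictions between levels, and it is precisely the inverse-limit coherence of $\iota$ that ensures these choices fit together into a globally valid transversal of the kind allowed in Section~\ref{s:encode}.
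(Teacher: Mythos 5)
Your proposal is correct and follows essentially the same route as the paper, whose own proof is only a two-sentence sketch of exactly this reduction: condition (1) pins down the entry on $\pi_1(\Sigma')$, condition (2) makes the entry affine along cosets so that peeling off a chosen coset representative at each stage of the derived series determines everything, and the residual ambiguity is precisely the ``choice of coset representatives'' allowed in the conclusion. Your component-wise normalization (choosing transversal elements $t$ with vanishing $\Sigma'$-entry) and the inverse-limit coherence remark simply make explicit the compatibility of choices that the paper leaves implicit; note also that your reading of condition (2) -- applying it to $g_1,g_2\in\pi_1(\Sigma)$ in a common coset of $\pi_1(\Sigma')$ rather than literally to homologous elements of $\pi_1(\Sigma')$, for which it would be vacuous given (1) -- is the same (intended) reading the paper's proof uses.
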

\begin{proof}
Let $g\in \pi_1(\Sigma)$.  On $\Sigma$, we must first take the homology class of $g$.  By identifying coset representatives for each term of the derived series of $\pi_1(\Sigma)$, we found a canonical element $\gamma$ of $\pi_1(\Sigma)$ which is homologous to $g$.  By condition (2), we found the smallest solvable cover of $\Sigma$ where $\gamma^{-1}g$ becomes a homology class and repeated the construction.
\end{proof}

The representation $H(\Sigma)$ has some rather pathological properties which make the characterization of mapping classes by their action on $H(\Sigma)$ rather subtle.  For instance:
\begin{prop}
Suppose that $\psi\in\Mod(\Sigma)$ fixes a positive dimensional subspace of $H^1(\Sigma,\bQ)$.  After lifting $\psi$ to $\Mod^1(\Sigma)$ arbitrarily, $\psi$ fixes a vector in $H(\Sigma)$.
\end{prop}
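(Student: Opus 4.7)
The plan is to exhibit an explicit fixed vector in $H(\Sigma)$ as the pullback of a fixed cohomology class on the base. Suppose $0 \neq \omega \in H^1(\Sigma,\bQ)$ satisfies $\psi^*\omega = \omega$. For each characteristic cover $p_{\Sigma'} : \Sigma' \to \Sigma$ in the class $\mathcal{K}$ defining $H(\Sigma)$, set $v_{\Sigma'} := p_{\Sigma'}^*\omega \in H^1(\Sigma',\bQ)$. Functoriality of pullback under composition of coverings ($(q \circ p)^* = p^*\circ q^*$) shows that the tuple $v := (v_{\Sigma'})_{\Sigma' \in \mathcal{K}}$ is compatible with the inverse system and therefore defines an element of $H(\Sigma) = \varprojlim H^1(\Sigma',\bQ)$.

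To see that $v$ is nonzero, recall that for any finite regular cover $p_{\Sigma'} : \Sigma' \to \Sigma$ with deck group $\gam$ there is a transfer map satisfying $(p_{\Sigma'})_* \circ p_{\Sigma'}^* = |\gam| \cdot 1$ on $H^1(\Sigma,\bQ)$. Since we are in characteristic zero, this implies $p_{\Sigma'}^*$ is injective, so $v_{\Sigma'} \neq 0$ for every $\Sigma' \in \mathcal{K}$, and hence $v \neq 0$ in $H(\Sigma)$.

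For the invariance claim, fix any lift of $\psi$ to $\Mod^1(\Sigma)$ and let $\tilde\psi$ denote its further lift to a homeomorphism of $\Sigma'$ (canonically determined by a chosen preimage of the basepoint). The commutativity $p_{\Sigma'} \circ \tilde\psi = \psi \circ p_{\Sigma'}$ gives
\[
\tilde\psi^* v_{\Sigma'} \;=\; \tilde\psi^* p_{\Sigma'}^*\omega \;=\; p_{\Sigma'}^* \psi^*\omega \;=\; p_{\Sigma'}^*\omega \;=\; v_{\Sigma'}.
\]
Moreover, any two lifts of $\psi$ to $\Sigma'$ differ by a deck transformation $g$ of $\Sigma'/\Sigma$, and $g \circ p_{\Sigma'} = p_{\Sigma'}$ implies $g^* \circ p_{\Sigma'}^* = p_{\Sigma'}^*$; hence the action of $\psi$ on the pullback subspace $p_{\Sigma'}^* H^1(\Sigma,\bQ) \subset H^1(\Sigma',\bQ)$ is independent of the chosen lift. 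Passing to the inverse limit, $v$ is fixed by $\psi$ regardless of the choice of lift of $\psi$ to $\Mod^1(\Sigma)$.

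There is essentially no substantive obstacle: the proof is a routine naturality and transfer computation. The only point worth emphasizing is that the usual ambiguity in defining the action of $\Mod(\Sigma)$ on the cohomology of covers — namely, the dependence on the choice of lift — is genuinely invisible to vectors lying in the pullback of $H^1(\Sigma,\bQ)$, which is precisely why this simple construction produces a fixed vector in $H(\Sigma)$.
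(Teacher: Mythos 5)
Your proof is correct and follows the same route as the paper's one-line argument: the fixed class is pulled back to each cover, yielding a compatible family that is a nonzero $\psi$-invariant element of $H(\Sigma)$. Your added verifications (nonvanishing via the transfer identity $(p_{\Sigma'})_*\circ p_{\Sigma'}^*=|\Gamma|$, and lift-independence because deck transformations act trivially on the pullback subspace) are exactly the details the paper's ``this is obvious'' elides, the latter being the reason the choice of lift to $\Mod^1(\Sigma)$ is immaterial.
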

\begin{proof}
This is obvious, since we can pullback the cohomology of $\Sigma$ to each cover $\Sigma'$ and extend a vector in $H^1(\Sigma,\bQ)$ to $H^1(\Sigma',\bQ)$ by zero.  Analogously, we can replace $H^1(\Sigma',\bQ)$ by $H_1(\Sigma',\bQ)$, except we then need to rescale by the degree of the cover $\Sigma'\to\Sigma$.
\end{proof}

Any more interesting data to be found in the virtual homological representation of $\Mod^1(\Sigma)$ has to be found in the action of an automorphism on the isotypic components of the representations of a finite characteristic quotient of $\pi_1(\Sigma)$.  Describing these actions seems rather difficult, certainly no easier than understanding the action of an automorphism on a characteristic quotient of $\pi_1(\Sigma)$.  Certain invariants, such as the dilatation of a pseudo-Anosov homeomorphism, may be hidden somewhere in these representations, but it is not at all obvious where.  If $\psi$ is not virtually a homological pseudo-Anosov mapping class for instance (any Torelli mapping class of a closed surface will do), then the homological dilatation of $\psi$ will be strictly smaller than the geometric dilatation of $\psi$ (cf. \cite{BB}, \cite{KS}, \cite{LT}).

\end{document}